\newtheorem{theorem}{Theorem}[section]
\newtheorem{lemma}[theorem]{Lemma}
\newtheorem{cor}[theorem]{Corollary}
\newtheorem{proposition}[theorem]{Proposition}
\newtheorem{conjecture}[theorem]{Conjecture}
\newtheorem{corollary}[theorem]{Corollary}
\newcommand\cE{{\mathcal E}}
\newcommand\bbF{{\mathbb F}}
\def\myellipse{(0,1.2) ellipse (1.25cm and .5cm)}
\def\mytriangle{ 
      [rounded corners=2mm] (0,1)--(-0.6,2.5)--(.6,2.5)--cycle
      (-0.3, 2.2)  circle (2pt) node {}
      (0.3, 2.2)  circle (2pt) node {}
      (0, 1.5)  circle (2pt) node  {};
}
\begin{document}

\title{Infection in Hypergraphs}

\author{ 
Ryan Bergen\thanks{Department of Mathematics and Statistics,
    University of Regina, Regina, Saskatchewan, S4S 0A2. Email:
  \texttt{bergen4r@uregina.ca}.}  
\and
Shaun Fallat\thanks{Department of Mathematics and Statistics,
  University of Regina, Regina, Saskatchewan, S4S 0A2. Research
  supported in part by an NSERC research grant Application No.: RGPIN-2014-06036. Email:
  \texttt{shaun.fallat@uregina.ca}.}
\and
Adam Gorr\thanks{Department of Mathematics and Statistics, University
  of Regina, Regina, Saskatchewan, S4S 0A2. Email:
  \texttt{gorr200a@uregina.ca}.} 
\and
Ferdinand Ihringer\thanks{Department of Mathematics and Statistics,
  University of Regina, Regina, Saskatchewan, S4S 0A2 Research
  supported in part by a PIMS post-doctoral fellowship. Email:
  \texttt{Ferdinand.Ihringer@gmail.com}.} 
\and
Karen Meagher\thanks{Department of Mathematics and Statistics,
  University of Regina, Regina, Saskatchewan, S4S 0A2. Research
  supported in part by an NSERC research
  grant Application No.: RGPIN-341214-2013. \texttt{Karen.Meagher@uregina.ca}.} 
\and
Alison Purdy\thanks{Department of Mathematics and Statistics, University
  of Regina, Regina, Saskatchewan, S4S 0A2.  Email: \texttt{Alison.Purdy@uregina.ca}.} 
\and
Boting Yang\thanks{Department of Computer Science,
  University of Regina, Regina, Saskatchewan, S4S 0A2. Research
  supported in part by an NSERC Discovery Research Grant, Application
  No.: RGPIN-2013-261290.  Email: \texttt{Boting.Yang@uregina.ca}.}
\and
Guanglong Yu\thanks{Department of Mathematics and Statistics,
  University of Regina, Regina, Saskatchewan, S4S 0A2 and Department of
  Mathematics, Yancheng Teachers University, Yancheng, 224002,
  Jiangsu, P.R. China. Research supported in part by an NSFC grant
(No. 11271315), Jiangsu Qinglan Project (2014A).  Email: \texttt{yglong01@163.com}.}
}

\maketitle

\begin{abstract}
  In this paper a new parameter for hypergraphs called
  \textsl{hypergraph infection} is defined. This concept generalizes
  zero forcing in graphs to hypergraphs. The exact value of the
  infection number of complete and complete bipartite hypergraphs is
  determined. A formula for the infection number for interval
  hypergraphs and several families of cyclic hypergraphs is given. The
  value of the infection number for a hypergraph whose edges form a
  symmetric $t$-design is given, and bounds are determined for a
  hypergraph whose edges are a $t$-design. Finally, the infection
  number for several hypergraph products and line graphs are
  considered.
\end{abstract}

\section{Introduction}
\label{sec:defn}

The subject of zero forcing for graphs has been widely
studied~\cite{MR2645093, MR3010007,MR2388646, HCY}.  In this paper we
generalize the concept of zero forcing on graphs to hypergraphs.

In standard zero forcing for graphs, the vertices of the graph are
coloured either black or white. A black vertex can force a white
vertex to black according to a colour change rule. The colour change
rule for standard zero forcing is that a black vertex can force an
adjacent white vertex to black if it is the only white vertex adjacent
to that black vertex. A set of vertices in a graph is a
\textsl{zero forcing set} for the graph if when the vertices in this
set are set to black and the colour changing rule is applied
repeatedly, all the vertices of the graph are eventually forced to
black. The \textsl{zero forcing number} of a graph is the size of the
smallest zero forcing set for the graph. For a graph $G$, the
zero forcing number is denoted by $Z(G)$.

The term ``zero forcing'' is based on an algebraic property of these
sets.  Consider a vector with the entries corresponding to the
vertices of a graph. Further, assume the entries corresponding to a
set of vertices in a zero forcing set for the graph are equal to
zero. The zero forcing property of the set guarantees that such a
vector is in the kernel of the adjacency matrix of the graph only if
the vector is the zero vector. The term ``zero forcing'' refers to
the fact that the remaining entries of the vector are forced to be
zero for the vector to be in the kernel of the adjacency matrix.

For hypergraphs, there is no matrix analogous to the adjacency matrix
of a graph and this notion of a set of entries in a vector forcing the
other entries to be zero in a proposed null vector does not
apply. However, in this paper we focus on generalizing the colour
change rule and hence we use the term \textsl{infection}, rather than
zero forcing. Terms such as infection, propagation, and searching have
all been used for notions similar to zero forcing, see
\cite{propagation, severini, yang}.

For infection in a hypergraph, the vertices are initially either
infected or uninfected (as opposed to either coloured black or white,
as they are in zero forcing for graphs). There is an \textsl{infection
  rule} that determines when vertices can infect other vertices (this
is analogous zero forcing rule for graphs). In this case, it is a
subset of infected vertices in an edge that may infect the remaining
vertices in that specific edge, rather than a single vertex forcing
another vertex.  The following is the infection rule for hypergraphs.

{\bf Infection Rule:} A non-empty set $A$ of infected vertices
can infect the vertices in an edge $E$ if:
\begin{enumerate}
\item $A \subset E$, and 
\item there are no uninfected vertices $v$, not contained in $E$, such that $A
  \cup \{v\}$ is a subset of an edge.
\end{enumerate}

Similar to the case for graphs, if two vertices in a hypergraph are
contained in a common edge, then we say that the vertices are
\textsl{adjacent}. Further, for a hypergraph, we can define two sets to
be adjacent if there is an edge that contains them both. So a set $A$
of infected vertices can infect an edge $E$ if there are no
uninfected vertices outside of $E$ that are adjacent to $A$.

If $A \subset E$ satisfies the conditions set out in the infection
rule, then we say that ``the set $A$ infects the edge $E$''. In the
case of hypergraphs, it is an edge and all the vertices in the edge
that are infected rather than a single vertex, as is the case for
graphs. A set of vertices in a hypergraph is an \textsl{infection set}
if when the vertices in the set are initially infected and the
infection rule is applied repeatedly, then all the vertices in the
hypergraph become infected. The \textsl{infection number} of a
hypergraph $H$ is the size of a smallest infection set for $H$; the
infection number of $H$ is denoted by $I(H)$.

In a hypergraph the edges are subsets of the vertices, and can be of
any size (the \textsl{size} of an edge is the number of
vertices in the edge). If all the edges in a hypergraph contain
exactly $k$ vertices, then the hypergraph is called a
\textsl{$k$-hypergraph}. A 2-hypergraph is a graph; the infection
number for a 2-hypergraph is equivalent to the zero forcing number of
the graph.

\begin{proposition}
Let $H$ be a $2$-hypergraph, then $Z(H) = I(H)$.
\end{proposition}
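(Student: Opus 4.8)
The plan is to show that, when every edge of $H$ has size exactly $2$, the hypergraph infection rule coincides step for step with the standard zero forcing colour change rule; from this the equality of the two parameters follows at once. Throughout, I identify the $2$-hypergraph $H$ with the graph $G$ on the same vertex set whose edges are the (two-element) edges of $H$, and observe that two vertices are adjacent in the sense defined above precisely when they form an edge of $G$.

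First I would analyse a single application of the infection rule in the case $|E| = 2$. Write $E = \{u, w\}$. Since $A$ is a non-empty subset of $E$, either $A = \{u\}$, $A = \{w\}$, or $A = E$; in the last case both vertices are already infected, so the rule produces nothing new and such steps may be discarded. Hence every effective application has $A$ a singleton, say $A = \{u\}$ with $u$ infected. Because every edge has exactly two vertices, for any vertex $v$ the set $\{u, v\}$ is contained in an edge if and only if $\{u, v\}$ is itself an edge, i.e.\ $u$ and $v$ are adjacent in $G$. Thus condition~(2) of the infection rule says exactly that $u$ has no uninfected neighbour other than possibly $w$. For the step to infect a new vertex we need $w$ uninfected, and then condition~(2) asserts that $w$ is the unique uninfected neighbour of $u$. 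This is precisely the hypothesis under which the zero forcing rule lets the black vertex $u$ force the white vertex $w$; conversely, any legal zero forcing step in $G$ is realised by taking $A$ to be the forcing vertex. So the effective infection steps of $H$ and the colour change steps of $G$ are literally the same operation.

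With this correspondence in hand, I would argue by induction on the number of rule applications that, starting from any common initial set $S$, the set of infected vertices of $H$ obtained by repeatedly applying the infection rule equals the set of black vertices of $G$ obtained by repeatedly applying the colour change rule. Consequently $S$ is an infection set for $H$ if and only if $S$ is a zero forcing set for $G$, and taking $S$ of minimum size on each side gives $I(H) = Z(H)$.

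I expect the one point requiring care to be the bookkeeping in the correspondence rather than any substantive difficulty: one must verify that condition~(2), phrased in terms of sets $A \cup \{v\}$ lying inside an edge, collapses exactly to the ``unique uninfected neighbour'' condition when $|E| = 2$, and one must dispose cleanly of the degenerate case $A = E$ (and, more generally, of any step that infects nothing) so that it does not disrupt the step-by-step matching of the two closure processes.
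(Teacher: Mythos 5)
Your proof is correct and takes essentially the same approach as the paper, which simply observes that in a $2$-hypergraph a singleton infecting an edge is the same condition as a vertex applying a force; you have merely spelled out the verification (the singleton reduction, the collapse of condition~(2) to the unique-uninfected-neighbour condition, and the step-by-step matching of the two closure processes) that the paper leaves implicit.
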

\begin{proof}
  In a 2-hypergraph, the condition that a vertex $v$ can apply a force
  is equivalent to the condition that a subset of vertices (which in
  this case is a singleton) can infect an edge.
\end{proof}

Let $H$ is a hypergraph and $W$ a subset of the vertices of $H$. The
set of all vertices in $H$ that are infected after repeatedly applying the
infection rule, with $W$ being the set of initially infected vertices,
is called the \textsl{derived set of $W$}. This is denoted by $I_W$. A
set $W$ is an infection set if and only if the derived set is the set
of all vertices.

The empty hypergraph is the hypergraph with no vertices and no edges, we
will not consider this case. A \textsl{trivial hypergraph} is
hypergraph with vertices, but no edges. The infection number for any
trivial hypergraph is clearly the number of vertices in the hypergraph
since no set can ever infect any edge. For every other hypergraph,
there is an upper bound on the size of the infection number for a
hypergraph.

\begin{proposition}\label{prop:upperbound}
  Let $H$ be a non-trivial hypergraph on $n$ vertices and let $k$ be the size of
  the largest edge in $H$, then
\[
I(H) \leq n-k+1.
\]
\end{proposition}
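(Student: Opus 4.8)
The plan is to prove this upper bound by exhibiting an explicit infection set of size $n-k+1$, since $I(H)$ is by definition the size of a \emph{smallest} infection set; any single infection set of that size suffices.

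First I would fix a largest edge $E$ of $H$, so that $|E| = k$; such an edge exists because $H$ is non-trivial. Writing $E = \{v_1, \dots, v_k\}$, I would take as my candidate infection set the set $W$ consisting of every vertex of $H$ except $v_1, \dots, v_{k-1}$, i.e.
\[
W = V(H) \setminus \{v_1, \dots, v_{k-1}\}.
\]
In other words, I infect everything except all but one of the vertices of the chosen largest edge. Then $|W| = n-(k-1) = n-k+1$, which is exactly the claimed bound, so it only remains to verify that $W$ is genuinely an infection set, i.e.\ that its derived set is all of $V(H)$.

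To see this, consider the singleton $A = \{v_k\}$, the unique infected vertex lying in $E$. I claim $A$ can infect $E$ in a single application of the infection rule. Condition~(1) holds since $A \subset E$. For condition~(2), observe that the only uninfected vertices are $v_1, \dots, v_{k-1}$, and each of these lies inside $E$; hence there is no uninfected vertex $v$ outside $E$ whatsoever, and in particular none for which $A \cup \{v\}$ is contained in an edge. Thus condition~(2) is satisfied vacuously, $A$ infects $E$, and $v_1, \dots, v_{k-1}$ all become infected. At that moment every vertex of $H$ is infected, so $I_W = V(H)$ and $W$ is an infection set, giving $I(H) \leq |W| = n-k+1$.

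The argument presents no serious obstacle; the single point requiring care — and really the whole idea of the proof — is the decision to leave \emph{only} vertices of one largest edge uninfected. This is precisely what forces every uninfected vertex to lie inside the edge being infected, making condition~(2) hold vacuously. Had the uninfected vertices been distributed differently, one could easily produce an uninfected vertex outside $E$ that is adjacent to $A$, which would block the infection step and invalidate the construction.
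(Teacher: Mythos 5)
Your proof is correct and is essentially identical to the paper's: both leave exactly a $(k-1)$-subset of a largest edge uninfected and observe that the remaining vertex of that edge can infect it because condition (2) of the infection rule holds vacuously. No further comment is needed.
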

\begin{proof}
  Let $A$ be a $(k-1)$-subset of an edge $E$ of size $k$ in the
  hypergraph. We claim that the set of all vertices except the
  vertices in $A$, form an infection set of size $n-(k-1)$ for the
  hypergraph. This follows since the final element in $E$ can infect
  $E$, since it cannot be adjacent to any uninfected vertices outside
  of $E$, as there are no uninfected vertices outside of $A$.
\end{proof}
 
In Section~\ref{sec:completes}, we will see that this bound holds with
equality for the complete hypergraph; we will also demonstrate other hypergraphs
where this bound is tight.

The \textsl{line graph} of a hypergraph $H$ is the graph formed by
representing each edge of $H$ by a vertex; these vertices are adjacent
in the line graph if and only if the corresponding edges of the
hypergraph $H$ intersect. The line graph of a hypergraph $H$ is
denoted by $L(H)$, we consider these graphs in more detail in
Section~\ref{sec:linegraph}. A hypergraph is \textsl{connected} if and
only if its line graph is connected, that is a hypergraph is connected if
there is a path between any two vertices in the line graph. A
\textsl{connected component} of a hypergraph is a maximal connected
sub-hypergraph. It is not hard to see that the infection number of a
hypergraph is the sum of the infection numbers of the connected
components of the hypergraph.

\begin{proposition}\label{prop:connected}
If $H$ is a hypergraph with connected components $H_1, \dots, H_t$ then
\[
I(H) = \sum_{i=1}^t I(H_i).
\]
\end{proposition}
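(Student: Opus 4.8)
The plan is to prove the two inequalities $I(H)\leq\sum_i I(H_i)$ and $I(H)\geq\sum_i I(H_i)$ separately, and the engine for both is a single \emph{locality} observation: an infection step in one component never interacts with another component. Concretely, I would first record that every edge lies entirely inside one connected component, so if a nonempty infected set $A$ is contained in an edge $E$, then $A\subseteq E\subseteq V(H_i)$ for a single $i$. Now suppose $v$ is any vertex with $A\cup\{v\}$ contained in some edge $F$. Since $F\supseteq A\neq\emptyset$ and $A\subseteq E$, the edge $F$ meets $E$, and hence $F$ (and in particular $v$) lies in the same component $H_i$. Therefore condition~(2) of the infection rule for $E$ quantifies only over vertices of $V(H_i)$, so whether $A$ may infect $E$ depends solely on the infection status of vertices in $H_i$ and is completely unaffected by the other components.

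From this I would derive the key claim: for any initial set $W$ and each $i$, the vertices of $H_i$ in the derived set $I_W$ coincide with the derived set of $W\cap V(H_i)$ computed inside the hypergraph $H_i$ alone. The argument is an induction on the number of applications of the infection rule. Each individual infection step takes place inside exactly one component, and by the locality observation its legality is decided entirely within that component; hence the global process is just the disjoint union of the $t$ component processes, which may be interleaved in time but never interfere. Granting this claim, the two inequalities are short. For the upper bound, choose a minimum infection set $W_i$ of each $H_i$ and put $W=\bigcup_i W_i$; since $W\cap V(H_i)=W_i$ infects $H_i$, the claim gives $I_W\cap V(H_i)=V(H_i)$ for every $i$, so $I_W=V(H)$ and $W$ is an infection set of size $\sum_i I(H_i)$. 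For the lower bound, take a minimum infection set $W$ of $H$; the claim shows that each $W\cap V(H_i)$ is an infection set of $H_i$, so $|W\cap V(H_i)|\geq I(H_i)$, and summing over the disjoint pieces whose union is $W$ gives $I(H)=|W|\geq\sum_i I(H_i)$.

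The only real content is the locality claim, and the step that deserves the most care is the inductive bookkeeping behind it. I must verify not only that the set of vertices adjacent to $A$ is genuinely confined to $A$'s component, but also that interleaving infection steps drawn from different components can neither create nor destroy a legal infection inside any fixed component. Making this precise amounts to a small confluence argument: the order in which steps from distinct components are applied is irrelevant, so the final derived set is well defined and splits as a union over the components. Once that is pinned down, everything else is routine.
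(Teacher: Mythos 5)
Your proof is correct; the paper itself offers no proof of this proposition (it is stated after the remark that it is ``not hard to see''), and your locality argument is exactly the intended one. The one step that genuinely needs the care you give it --- that any edge $F$ containing $A\cup\{v\}$ must meet $E$ because the infection rule requires $A\neq\emptyset$, so $F$ stays in the same component --- is handled correctly, and the rest (decoupling of the component processes and the two inequalities) follows as you describe.
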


\section{Reduced hypergraphs}

One significant difference between graphs and hypergraphs is that
there is no restriction on the size of an edge in a hypergraph, where
in a graph all edges have size two. This can lead to the situation
where one edge of a hypergraph is a subset of another edge (a
hypergraph that does not have this property is called
\textsl{reduced}). The most extreme case of this is when the entire
set of vertices forms an edge.

\begin{proposition}\label{prop:superedge}
Let $H$ be a hypergraph with vertex set $V$. If $V$ is an edge of $H$,
then $I(H) = 1$,
\end{proposition}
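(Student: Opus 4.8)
The plan is to establish the two inequalities $I(H)\ge 1$ and $I(H)\le 1$ separately. The lower bound is immediate: the infection rule can only be applied starting from a \emph{non-empty} set $A$ of already-infected vertices, so if the initial set is empty no vertex ever becomes infected. Since we exclude the empty hypergraph, $V$ is non-empty and any infection set must contain at least one vertex, giving $I(H)\ge 1$.

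The real content is the upper bound $I(H)\le 1$, and the cleanest route is to exhibit a single-vertex infection set. First I would pick an arbitrary vertex $a$ and declare it infected, so $A=\{a\}$, and then verify that $A$ infects the edge $E=V$. Condition~(1) of the infection rule holds since $a\in V=E$. The key observation is that condition~(2) is satisfied \emph{vacuously}: it quantifies over uninfected vertices $v$ that are \emph{not contained in} $E$, but $E=V$ is the entire vertex set, so there are no vertices outside $E$ at all. Hence no uninfected $v$ outside $E$ can exist, regardless of what the other edges of $H$ look like. Therefore $\{a\}$ infects $E=V$, every vertex of $H$ becomes infected, and $\{a\}$ is an infection set of size $1$, so $I(H)\le 1$. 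Combining the two bounds yields $I(H)=1$.

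Alternatively — and I would at least mention this as a one-line shortcut — the upper bound follows directly from Proposition~\ref{prop:upperbound}. Because $V$ itself is an edge, the largest edge of $H$ has size exactly $k=n$ (no edge can exceed the $n$ available vertices), and the bound $I(H)\le n-k+1$ specializes to $I(H)\le n-n+1=1$. Note that $H$ is automatically non-trivial here, since it has at least the edge $V$, so Proposition~\ref{prop:upperbound} indeed applies.

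I do not expect any genuine obstacle in this argument; the statement is essentially a direct unwinding of the definition. The only point demanding care is to articulate clearly \emph{why} condition~(2) holds vacuously — that the absence of vertices outside $E=V$ makes the forbidden configuration $A\cup\{v\}$ impossible — and to note that this holds irrespective of the presence or structure of any other edges in $H$. Everything else is routine.
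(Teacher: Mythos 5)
Your argument is correct and is essentially the paper's proof, just spelled out in more detail: the paper likewise observes that any single vertex can infect the edge $V$, which is exactly your vacuous-condition-(2) observation. The extra remarks (the trivial lower bound and the shortcut via Proposition~\ref{prop:upperbound}) are fine but add nothing beyond the paper's one-line justification.
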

\begin{proof} 
  This follows simply from the fact that any one vertex can infect the
  edge containing all the vertices.
\end{proof}

Consequently, the infection number of a hypergraph is not monotone
under sub-hypergraphs; indeed, simply adding an edge that contains the
entire vertex set of the hypergraph will reduce the infection number
to one.

The next result shows that we can remove any edge that is a
subset of another in a hypergraph without changing the infection
number.

\begin{proposition}
  Let $H$ be a hypergraph and assume that $E_1$ and $E_2$ are edges of
  $H$ with $E_1 \subset E_2$, then $I(H) = I(H\backslash E_1)$.
\end{proposition}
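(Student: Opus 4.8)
The plan is to prove something slightly stronger than the stated equality: for \emph{every} set $W$ of initially infected vertices, the derived set $I_W$ is the same whether we work in $H$ or in $H':=H\backslash E_1$ (both hypergraphs share the same vertex set $V$). Since a set is an infection set precisely when its derived set equals $V$, this immediately shows that $H$ and $H'$ have exactly the same infection sets, and hence the same smallest infection set, giving $I(H)=I(H\backslash E_1)$. I would therefore reduce the whole proposition to establishing $I_W(H)=I_W(H')$, which I would prove by the two inclusions.

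The observation that drives the argument is that condition~(2) of the infection rule is completely unaffected by deleting $E_1$. Condition~(2) only refers to whether some set $A\cup\{v\}$ is contained in an edge, and because $E_1\subset E_2$, any such set witnessed by $E_1$ is also witnessed by $E_2$; deleting $E_1$ therefore removes no witnesses. Consequently, for a fixed infected set $S$, a fixed $A\subseteq S$, and a fixed target edge $E$ present in both hypergraphs, the step ``$A$ infects $E$'' is legal in $H$ if and only if it is legal in $H'$. The inclusion $I_W(H')\subseteq I_W(H)$ is then immediate: $H'$ has strictly fewer edges (only $E_1$ is gone), so every edge that can serve as a target in $H'$ is still an edge of $H$, and by the previous sentence every legal $H'$-step is a legal $H$-step. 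Thus anything $H'$ can infect, $H$ can infect.

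For the reverse inclusion $I_W(H)\subseteq I_W(H')$ I would argue by induction on the length of an infection sequence in $H$, showing that every $H$-reachable infected set lies inside $I_W(H')$. A step of $H$ that targets an edge $E\neq E_1$ transfers verbatim to $H'$. The only genuinely new case is a step that infects the edge $E_1$ itself, which $H'$ cannot perform; here I would replace it by a step infecting $E_2$. The key sub-claim is that if $A$ infects $E_1$ (from infected set $S$) then $A$ infects $E_2$ from $S$: we have $A\subseteq E_1\subseteq E_2$, and any uninfected $v\notin E_2$ automatically satisfies $v\notin E_1$, so a condition~(2) violation for the target $E_2$ would already be a violation for the target $E_1$. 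Since $E_1\subseteq E_2$, infecting $E_2$ infects at least everything that infecting $E_1$ would.

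The step I expect to be the main obstacle is making this replacement rigorous inside the induction, because after substituting the $E_2$-step for the $E_1$-step the infected sets in the two processes no longer coincide: $H'$ has infected a superset. To handle this cleanly I would first isolate and prove a \emph{monotonicity lemma}: if $S\subseteq S'$ and $A$ can infect $E$ from $S$, then $A$ can infect $E$ from $S'$ (enlarging the infected set only shrinks the pool of uninfected $v$ relevant to condition~(2), so infection can only become easier). With monotonicity in hand the induction closes: from $S\subseteq I_W(H')$ the relevant step remains legal from the larger set $I_W(H')$, and since $I_W(H')$ is closed under infection, the entire target edge ($E$ or $E_2$) is already contained in $I_W(H')$. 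Combining the two inclusions yields $I_W(H)=I_W(H')$ for every $W$, and the proposition follows.
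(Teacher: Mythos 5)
Your proposal is correct and follows essentially the same route as the paper's proof: both directions rest on the two observations that any adjacency witnessed by $E_1$ is also witnessed by $E_2$ (so condition (2) of the infection rule is unaffected by deleting $E_1$), and that a step infecting $E_1$ can be replaced by one infecting $E_2$. Your explicit monotonicity lemma and derived-set formulation just make rigorous what the paper leaves implicit when the two processes' infected sets diverge.
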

\begin{proof} 
  Assume that $A$ is an infection set for $H$.  If $A' \subseteq A$
  can infect an edge $E$ (with $E_2 \not \subseteq E$), then $A'$ can
  also infect $E$ in $H\backslash E_1$.  So starting with $A$, the
  infection process in $H\backslash E_1$ can progress as it does in
  $H$.  At some point in the infection process for $H$, a set $A'$
  infects the edge $E_1$.  This set $A'$ can also infect $E_2$, since
  all of the vertices in $E_2\setminus E_1$ must already be
  infected. Thus $A$ forms an infection set for $H\backslash E_1$, and
  hence $I(H) \geq I(H\backslash E_1)$.

  Conversely, assume that $A$ is an infection set for $H\backslash
  E_1$.  Assume that $A' \subseteq A$ can infect an edge $E$, then $A'$
  is not adjacent to any uninfected vertices in $E_2 \backslash E$,
  and thus $A'$ is also not adjacent to any uninfected vertices in
  $E_1 \backslash E$. So $A'$ can infect $E$ in the hypergraph $H$,
  and the infection process in $H$, starting with $A$ can progress as
  it does in $H\backslash E_1$. Further, $E_2$ is infected at some
  point in the infection process in $H\backslash E_1$, at this step in
  $H$, the edge $E_1$ is infected.  Thus $A$ is also an infection set
  for $H$ and hence $I(H) \leq I(H\backslash E_1)$. Thus $I(H) =
  I(H\backslash E_1)$.
\end{proof}

This proof implies not only that $I(H) = I(H\backslash E_1)$, but also that
an infection set for one of the hypergraphs is also an infection set
for the other.

Many of the notations and concepts for graphs can be generalized to
hypergraphs. For a hypergraph $H$, the vertex set is denoted by $V(H)$
and the edge set by $E(H)$. If $v$ is a vertex in a hypergraph $H$,
then the \textsl{degree} of $v$, denoted $\deg(v)$, is the number of
edges that contain $v$. Further, if $A \subseteq V(H)$, then the
degree of the set $A$ is defined by
\[
\deg(A) = |\{E \in E(H) : A \subseteq E\} |.
\]
Similar to the notation for standard graphs, we will use $\delta(H)$
to denote the minimum degree of a vertex in a hypergraph $H$. Any
subset that has degree $1$ can infect the edge that contains it. But
Proposition~\ref{prop:superedge} shows that a vertex of high degree can
still infect an edge. 

A hypergraph is said to be \textsl{reduced} if no edge is the subset
of another edge. A hypergraph can be reduced by simply removing all
edges that are contained in another edge. If a hypergraph is not
reduced, then it is possible that the infection number is equal to
one, while the minimum degree is very large. This is not the case for
reduced hypergraphs.  

\begin{lemma}\label{lem:degreebound}
If $H$ is a reduced hypergraph and $I(H) =1$, then $\delta(H) = 1$.
\end{lemma}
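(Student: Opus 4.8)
The plan is to exploit the fact that the very first infection in the process must be carried out by the single starting vertex, and then to use reducedness to pin down that vertex's degree. Since $I(H)=1$, I would fix a vertex $w$ with $I_{\{w\}} = V(H)$. At the outset $w$ is the only infected vertex, so the only non-empty set of infected vertices available is $\{w\}$; hence the first application of the infection rule must be $\{w\}$ infecting some edge $E$ containing $w$.

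Next I would unpack condition (2) of the infection rule for this first step. With $A=\{w\}$ and every vertex other than $w$ still uninfected, condition (2) says there is no vertex $v \notin E$ for which $\{w,v\}$ lies in a common edge; equivalently, every neighbour of $w$ belongs to $E$. Consequently, for any edge $E'$ containing $w$, all of $E' \setminus \{w\}$ consists of neighbours of $w$ and so lies in $E$, while $w \in E$ as well; thus $E' \subseteq E$. Because $H$ is reduced, no edge properly contains another, forcing $E' = E$. Since $E$ is itself an edge through $w$, this shows that $E$ is the unique edge containing $w$, i.e. $\deg(w) = 1$, and therefore $\delta(H) \le 1$.

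It remains to rule out $\delta(H)=0$, which is where a little care is needed. A vertex of degree $0$ lies in no edge, so it can never be infected by the rule and would have to belong to the (singleton) starting set; since that set is $\{w\}$ and we have just shown $\deg(w)=1$, any degree-$0$ vertex would be an uninfectable vertex distinct from $w$, contradicting $I_{\{w\}} = V(H)$. Here I am implicitly using that $H$ is non-trivial, so that the infection process genuinely starts, with the one-vertex edgeless hypergraph being the only degenerate exception. Hence every vertex has degree at least $1$, and combined with $\deg(w)=1$ this gives $\delta(H)=1$.

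The substantive step is the middle one: the reducedness hypothesis is exactly what converts the statement ``every edge through $w$ sits inside $E$'' into ``there is only one edge through $w$''. The main thing to watch carefully is the justification that the first force is necessarily performed by $\{w\}$ alone, and that condition (2), evaluated at the initial configuration, really does force all neighbours of $w$ into $E$; the degree-$0$ bookkeeping is routine by comparison.
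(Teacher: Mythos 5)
Your proof is correct and follows essentially the same route as the paper: the first infection must be performed by the singleton starting set on some edge $E$, condition (2) forces all neighbours of the starting vertex into $E$, and reducedness then collapses every edge through that vertex to $E$ itself. The only difference is that you additionally rule out degree-$0$ vertices (and flag the one-vertex edgeless exception), a point the paper's terser proof passes over silently.
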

\begin{proof}
  Assume that $H$ is a reduced hypergraph with $I(H) =1$. Let $\{v\}$
  be an infection set of size one and let $E$ be the first edge that
  $v$ infects. Then $v$ cannot be adjacent to any vertex outside of
  $E$. Since the hypergraph is reduced, this implies that $v$ is in no
  other edges and hence has degree one.
\end{proof}

This lemma also implies that in a reduced hypergraph with $I(H) =1$, the vertex
that does the first infection must have degree one. It is not hard to
see that the minimum degree in a graph is a lower bound for the
zero forcing number. It is not true in general that $\delta(H)$ is
a lower bound for $I(H)$ for a hypergraph $H$.
Lemma~\ref{lem:complete} in the next section shows that the complete
hypergraph is such an example where the minimum degree of a hypergraph is
much larger than the infection number.

Throughout the remainder of this paper, we will assume that our
hypergraphs are \textsl{reduced}.

\section{Complete hypergraphs and complete bipartite hypergraphs}
\label{sec:completes}

Let $H$ be a non-empty $k$-hypergraph on $n$ vertices. Then, from
Proposition~\ref{prop:upperbound}, the infection number of $H$ is no
more than $n-k+1$. In this section, we consider some $k$-hypergraphs on
$n$ vertices with infection number $n-k+1$.

The \textsl{complete $k$-uniform hypergraph}, denoted by $H^{(k)}_n$, has all
$k$-subsets of $\{1,\dots,n\}$ as its edges.

\begin{lemma}\label{lem:complete}
For any $k$ and $n$ with $1\leq k \leq n$, the value of $I(H^{(k)}_n)$ is $n-k+1$.
\end{lemma}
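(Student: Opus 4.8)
The plan is to combine the general upper bound with a matching lower bound. Proposition~\ref{prop:upperbound} already gives $I(H^{(k)}_n) \le n-k+1$, since $H^{(k)}_n$ is a non-trivial $k$-hypergraph for every $1 \le k \le n$ (the extreme case $k=n$, where the whole vertex set is the unique edge, is also covered directly by Proposition~\ref{prop:superedge}, as $n-k+1 = 1$). So the real work is to establish the lower bound $I(H^{(k)}_n) \ge n-k+1$, that is, to show that no set $W$ with $|W| \le n-k$ can be an infection set.

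First I would record the crux observation: in the complete hypergraph, a non-empty set $A$ of infected vertices can infect an edge $E$ only when every uninfected vertex already lies in $E$. Indeed, $A \subsetneq E$ forces $|A| \le k-1$, so for any vertex $v \notin E$ the set $A \cup \{v\}$ has at most $k$ vertices and is therefore contained in some $k$-subset, i.e. some edge of $H^{(k)}_n$. Hence condition~2 of the infection rule fails unless there are no uninfected vertices outside $E$.

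Using this, I would analyse the first infection step for an arbitrary initial set $W$. Infecting an edge $E$ requires the uninfected set $V \setminus W$ to be contained in the $k$-set $E$, while $E$ must also contain at least one infected vertex to serve as $A$. A first infection is therefore possible exactly when $|V \setminus W| \le k-1$. If $|V \setminus W| \ge k$, then either the uninfected vertices are too numerous to fit inside a single edge, or (when $|V \setminus W| = k$) the only edge containing them is $V \setminus W$ itself, which contains no infected vertex; in both cases no infection step can ever be applied. Consequently, if $|W| \le n-k$ the derived set satisfies $I_W = W \ne V$, so $W$ is not an infection set. This yields $I(H^{(k)}_n) \ge n-k+1$, and together with the upper bound gives equality.

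The main obstacle is pinning down the crux observation correctly and treating the boundary case $|V \setminus W| = k$ with care: here the uninfected set does fit inside an edge, so one must additionally note that such an edge is entirely uninfected and hence offers no nonempty set $A$ to initiate the process. Once this is clear, the argument collapses thanks to the single-step nature of infection in complete hypergraphs: there is never a genuine multi-step process, since the first (and only possible) infection already turns every vertex of some edge, and hence all previously uninfected vertices, infected at once.
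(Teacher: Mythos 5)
Your proposal is correct and follows essentially the same route as the paper: the upper bound is quoted from Proposition~\ref{prop:upperbound}, and the lower bound rests on the observation that in $H^{(k)}_n$ any infecting set $A$ has size at most $k-1$, so every uninfected vertex outside the target edge extends $A$ to another edge and violates condition~2 of the infection rule. The only cosmetic difference is that the paper realizes this via an explicit swap of an uninfected vertex in $E$ with one outside, whereas you argue directly that all uninfected vertices must lie in $E$ (handling the boundary case $|V\setminus W|=k$ separately), which amounts to the same contradiction.
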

\begin{proof}
  From Proposition~\ref{prop:upperbound} it follows that the infection
  number is no more than $n-k+1$. In fact, for this graph any set of
  $n-k+1$ vertices is an infection set.

  To see that the infection number cannot be any smaller, assume that
  there are initially $k$ uninfected vertices in $H^{(k)}_n$, say
  $v_1, \dots, v_k$.  Let $A$ be any subset of the $n-k$ infected vertices that
  can infect some edge $E$. Then $E = A \cup B$ where $B$ is a proper
  subset of $\{v_1, \dots, v_k\}$.  We can assume that $v_1 \in
  B$ and $v_k \not \in B$. Since the hypergraph is the complete hypergraph, the
  edge $A \cup (B\backslash \{v_1\} \cup \{v_k\})$ is an edge in the
  hypergraph. This means that $v_k$ is an uninfected vertex that is
  adjacent to $A$ outside of $E$. This is a contradiction, so no set
  of fewer than $n-k+1$ vertices can infect an edge.
\end{proof}

The complete hypergraph $H_n^{(2)}$ is isomorphic to the complete graph
$K_n$. The previous result shows that $I(H^{(2)}_n) = Z(K_n) = n-1$.
In a graph $G$, a set of vertices in which any two are adjacent is
called a \textsl{clique}; the complete graph is a clique. The size of
the maximum clique in $G$ is usually denoted by $\omega(G)$ and the
value of the zero forcing number is bounded below by $\omega(G) -1$~\cite{MR2388646}.

This is another point where a major difference between the zero
forcing number and the infection number is apparent. There is
no comparable bound for a hypergraph that contains a subgraph isomorphic
to the complete hypergraph $H_n^{(k)}$. For example, adding the edge
$\{1,\dots,n\}$ to the complete hypergraph $H_n^{(k)}$ produces a
hypergraph that contains $H_n^{(k)}$ as a sub-hypergraph, but has
infection number $1$. Next we will see how to construct a
$k$-hypergraph that has $H_n^{(k)}$ as a sub-hypergraph, and infection
number equal to $1$.

\begin{proposition}
  Let $H$ be any $k$-hypergraph. There exists a $k$-hypergraph $H'$ such
  that $E(H) \subset E(H')$ and $I(H') =1$.
\end{proposition}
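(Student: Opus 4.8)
The plan is to build $H'$ by threading a single \emph{tight path} of new edges through all of $V(H)$, together with some auxiliary new vertices, so that an infection started at one endpoint cascades along the path and eventually reaches every vertex. First I would record the key constraint: any starting vertex for a size-one infection set must have degree $1$. Indeed, if $\{v\}$ infects its first edge $E$ then, since initially every other vertex is uninfected, condition~(2) of the infection rule forces every neighbour of $v$ to lie in $E$, and as $H'$ is reduced this means $v$ lies only in $E$. Since $H$ need not contain such a vertex, I introduce new vertices. Arrange the vertices of $V(H)$ together with sufficiently many new vertices in a linear order $x_1, x_2, \dots, x_m$ chosen so that $x_1$ is new and every block of $k-1$ consecutive vertices contains at least one new vertex (for instance, placing a new vertex in every position $\equiv 1 \pmod{k-1}$). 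Let $H'$ have this vertex set, all the edges of $H$, and in addition all \emph{windows} $W_j = \{x_j, x_{j+1}, \dots, x_{j+k-1}\}$ of $k$ consecutive vertices. Each $W_j$ has size $k$, so $H'$ is a $k$-hypergraph, and each $W_j$ contains a new vertex, hence is not an edge of $H$; thus $E(H) \subsetneq E(H')$.

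Next I would verify $I(H')=1$ by showing $\{x_1\}$ is an infection set. The vertex $x_1$ is new and sits at the end of the order, so it lies in no edge of $H$ and in no window except $W_1$; thus $\deg(x_1)=1$ and $\{x_1\}$ infects $W_1$, leaving $x_1,\dots,x_k$ infected. I then argue by induction that once $W_1,\dots,W_j$ have been infected, the $(k-1)$-set $A_j = \{x_{j+1},\dots,x_{j+k-1}\} = W_j \cap W_{j+1}$ infects $W_{j+1}$, thereby infecting the new vertex $x_{j+k}$. The two points to check are: (i) a consecutive $(k-1)$-block lies in exactly the two windows $W_j$ and $W_{j+1}$, and in no edge of $H$ because $A_j$ contains a new vertex by the spacing condition, so every edge containing $A_j$ is $W_j$ or $W_{j+1}$; and (ii) the only neighbour of $A_j$ lying outside $W_{j+1}$ is $x_j \in W_j \setminus W_{j+1}$, which is already infected. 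Hence condition~(2) of the infection rule holds and $A_j$ infects $W_{j+1}$. Iterating exhausts the linear order, so all of $V(H')$, and in particular all of $V(H)$, becomes infected and $I(H')=1$.

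The main obstacle is the interference of the original edges: when an overlap $A_j$ tries to infect the next window, an edge of $H$ containing $A_j$ together with an uninfected vertex outside $W_{j+1}$ would block the infection. The entire purpose of inserting new vertices so that every $(k-1)$-block meets the new set is to guarantee $A_j \not\subseteq E$ for every edge $E$ of $H$, neutralising this interference; the same device supplies the degree-one starting vertex. Note that this argument requires $k \ge 3$, so that $k-1 \ge 2$ and original vertices can actually be accommodated between the inserted ones; the case $k=2$ is the graph case, where $I(H')=1$ forces $H'$ to be a path and the statement holds only when $H$ itself embeds in a path. I would therefore prove the result for $k \ge 3$ and treat the graph case $k=2$ separately.
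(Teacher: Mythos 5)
Your proof is correct, and the construction is genuinely different from the paper's. The paper also adjoins auxiliary vertices, but organizes them hierarchically rather than linearly: it covers $V(H)$ by $r_1=\lceil n/(k-1)\rceil$ sets of size $k-1$, attaches one new vertex to each to form new edges, then covers those $r_1$ new vertices by $r_2=\lceil r_1/(k-1)\rceil$ further sets, and iterates until a single apex vertex remains; the apex has degree one and the infection cascades back down the levels, with every infection after the first performed by a \emph{singleton} new vertex, which (being outside $V(H)$) can never be blocked by an edge of $H$. You instead thread one tight path of $k$-windows through $V(H)$, and your infections are performed by the $(k-1)$-overlaps of consecutive windows; your spacing condition, forcing a new vertex into every consecutive $(k-1)$-block, is precisely the substitute for the paper's singleton device in neutralizing interference from the original edges, and you verify this non-interference explicitly where the paper leaves it implicit. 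Both constructions use on the order of $n/(k-2)$ new vertices, so neither is more economical. Your closing remark about $k=2$ is a genuine catch rather than a defect: for $k=2$ the statement is false in general (a graph has zero forcing number one if and only if it is a path, so no supergraph of a triangle can have infection number one), and the paper's own recursion $r_{i+1}=\lceil r_i/(k-1)\rceil$ fails to terminate when $k=2$; the proposition should be read with $k\geq 3$, as you do.
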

\begin{proof}
 Assume that the vertex set
  for $H$ is $\{1,\dots,n\}$. The construction of the hypergraph $H'$ proceeds as follows.
  First, $\{1,\dots,n\}$ will be
  vertices in $H'$, and all the edges of $H$ will be edges of $H'$.

  Second, the vertices of $H$ can be covered by $r_1= \lceil \frac{n}{k-1}
  \rceil$ sets each of size $k-1$. Call these sets $A_1, A_2, \dots
  A_{r_1}$. Add the vertices $\{n+1, \dots,
n+r_1\}$ to the vertex set of $H$ and add sets
\[
A_1 \cup \{n+1\}, A_2 \cup \{n+2\}, \dots, A_{r_1} \cup \{n+r_1\}
\]
to the edge set of $H'$.

Third, the vertices $\{n+1, n+2, \dots, n+r_1\}$ can be covered in $r_2=
\lceil \frac{r_1}{k-1} \rceil$ sets each of size $k-1$. Call these
sets $B_1, B_2, \dots, B_{r_2}$, and add the sets
\[
B_1 \cup \{n+r_1+1\}, B_2 \cup \{n+r_1+2\}, \dots, B_{r_2} \cup \{n+r_1+r_2\}
\]
to the edge set of $H'$, and include the additional vertices $\{n +
r_1+1, \dots, n+r_1+r_2\}$ in $H'$. 

Continue in this fashion until $r_j = 1$. The final vertex to be added
to the hypergraph $H'$ is $n+ r_1 +r_2 + \dots + r_j$. This final
vertex is an infection set for the $k$-hypergraph $H'$.
\end{proof}

Not only are there hypergraphs with infection number $1$ that contain
a complete hypergraph, but there are also $k$-hypergraphs on $n$
vertices with infection number $n-k+1$ which are not complete
hypergraphs.

\begin{proposition}\label{prop:addvertextocomplete}
  For $2 \leq k \leq n$, let $H$ be the $k$-hypergraph on $n$ vertices that is formed by
  adding the element $n$ to each of the edges in $H_{n-1}^{(k-1)}$.
  If $n \geq 2k-1$, then the infection number of $H$ is $n-k$; otherwise, it
  is $n-k+1$.
\end{proposition}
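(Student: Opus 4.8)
The plan is to exploit the rigid structure of $H$. Writing $U = \{1,\dots,n-1\}$, every edge of $H$ has the form $\{n\}\cup S$ with $S$ a $(k-1)$-subset of $U$, and in particular every edge contains the distinguished vertex $n$. The whole argument rests on pinning down exactly when a nonempty infected set $A \subseteq E$ can infect an edge $E = \{n\}\cup S$. The first step is to record this as a lemma. Put $W = U\setminus S$, the $n-k$ vertices of $U$ lying outside $E$. A vertex $v \in W$ is adjacent to $A$ precisely when $A\cup\{v\}$ fits inside some edge; since $n-1\ge k-1$ this happens exactly when $|A|\le k-1$ in case $n\in A$, and when $|A|\le k-2$ in case $n\notin A$. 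Consequently, if $A$ infects $E$ and thereby infects a \emph{new} vertex of $U$, then $A$ falls into one of these two ranges, so every $v\in W$ is adjacent to $A$ and hence all of $W$ must already be infected; in particular, at the moment any vertex of $U$ becomes infected, the number of still-uninfected vertices of $U$ is at most $|S|=k-1$. Two companion facts drop out of the same computation: a fully infected $(k-1)$-subset $S\subseteq U$ infects $n$ (take $A=S$, so that $|A|=k-1$ and no $v\in W$ is adjacent to $A$), and once $n$ is infected and at most $k-1$ vertices of $U$ remain uninfected, the singleton $A=\{n\}$ infects $\{n\}\cup S$ for any $S$ covering them, completing the process.

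Next I would establish the universal lower bound $I(H)\ge n-k$. Consider the first moment in any infection process at which a vertex of $U$ is infected by the rule. Just before it, no vertex of $U$ has been infected by the rule, so the infected vertices of $U$ are exactly the initially infected ones; by the lemma at most $k-1$ vertices of $U$ are uninfected, whence the initial set contains at least $(n-1)-(k-1)=n-k$ vertices of $U$. (If no vertex of $U$ is ever infected by the rule, the initial set already contains all $n-1$ vertices of $U$.) Either way every infection set has size at least $n-k$.

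For the case $n\ge 2k-1$ I would check that any $n-k$ vertices of $U$ form an infection set. Here $n-k\ge k-1$, so the infected set contains a full $(k-1)$-subset, which infects $n$ by the lemma; exactly $k-1$ vertices of $U$ then remain uninfected, and the closing move with $A=\{n\}$ infects them. Combined with the lower bound this gives $I(H)=n-k$.

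The delicate case is $n\le 2k-2$, where I must rule out any infection set of size $n-k$; this is where I expect the main difficulty, since one has to argue that the process literally cannot begin, including the option of infecting $n$ first. By the lower bound such a set $X$ must consist of exactly $n-k$ vertices of $U$ with $n\notin X$, so $Y:=U\setminus X$ has size $k-1$ and is uninfected together with $n$. Before $n$ is infected, any applicable set $A$ lies in $U$, hence in some $S$. If $|A|\le k-2$ the move requires all of $W=U\setminus S$ to be infected, forcing $Y\subseteq S$ and so $S=Y$ (as $|Y|=|S|=k-1$); but then $A\subseteq S=Y$ is disjoint from $X$ and therefore empty, a contradiction. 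If instead $|A|=k-1$ then $A=S$ would have to be fully infected, which is impossible because $|X|=n-k\le k-2<k-1$. Thus no first rule-infection exists, $X$ is not an infection set, and $I(H)\ge n-k+1$; Proposition~\ref{prop:upperbound} supplies the matching upper bound, giving $I(H)=n-k+1$.
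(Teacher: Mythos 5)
Your proof is correct and follows essentially the same route as the paper: the same explicit infection sets for the upper bounds, and lower bounds obtained by counting how many vertices of $\{1,\dots,n-1\}$ must already be infected before any set can infect an edge, splitting into the cases $n\ge 2k-1$ and $n\le 2k-2$. The only real difference is organizational --- you package the adjacency computation into a single lemma and deduce the universal bound $I(H)\ge n-k$ from it, which has the merit of making fully explicit the step the paper disposes of with ``an argument similar to that used in the proof of Lemma~\ref{lem:complete}.''
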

\begin{proof}
  First consider the case where $n \geq 2k-1$.  Assume that the
  vertices labeled $1, \dots, n-k$ are infected and the vertices
  labeled $n-k+1, \dots, n$ are not infected.  Thus there are $k$
  uninfected vertices including $n$.  Since $n \geq 2k-1$ there are at
  least $k-1$ infected vertices, and any $(k-1)$-set of infected
  vertices can infect the vertex $n$. Once $n$ is infected, there are
  only $k-1$ uninfected vertices remaining.  By construction, they
  will be in an edge with $n$ and thus can all be infected by
  $n$. Thus $I(H) \leq n-k$.

  To see that the infection number cannot be less than $n-k$, let $A$
  be any set of infected vertices of size $n-k-1$ and assume that the
  remaining $k+1$ vertices are uninfected. Thus there must be at least
  $k$ vertices from $\{1,\dots,n-1\}$ that are uninfected. An argument
  similar to that used in the proof of Lemma~\ref{lem:complete} will
  show that $A$ cannot be an infection set.

  Next consider the case where $n \leq 2k-2$.  Since $n-k+1$ is always
  an upper bound on the infection number, we only need to prove that
  no set of $n-k$ vertices can be an infection set.  If $n=k$ this is
  trivial, so assume that $n > k$.  Let $A$ be a set of $n-k$
  infected vertices and assume the remaining $k$ vertices are
  uninfected.  Since $|A| \leq k-2$, for every edge
  containing an uninfected vertex $v \in \{1,\dots,n-1\}$ and a subset
  $A'$ of $A$, there will be at least one uninfected vertex $w \in
  \{1,\dots,n-1\}$ not in the edge.  By construction of $H'$, there will be
  another edge containing $A'$ and $w$, and thus the set $A$ can never
  infect an edge.
\end{proof}

The zero forcing number of the complete bipartite graph $K_{n_1,n_2}$
is known to be $n_1+n_2-2$. We derive a similar result for
complete $k$-partite hypergraphs.  Define the \textsl{$k$-partite
  complete hypergraph} $H^{(k)}_{n_1,n_2,\dots, n_k}$ as follows. The
vertex set of $H^{(k)}_{n_1,n_2,\dots, n_k}$ is $V$, which can be
partitioned into $k$ disjoint parts, namely $V_1, V_2, \dots, V_k$, where
$|V_i| = n_i$.  The edge set is the set of all $k$-sets with exactly
one element from each of $V_i$ where $i = 1, \dots, k$.

\begin{lemma}
If $H = H^{(k)}_{n_1,n_2,\dots, n_k}$ is a $k$-partite complete
hypergraph, then $I(H) = n_1+n_2+ \dots +n_k -k$.
\end{lemma}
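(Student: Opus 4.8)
The plan is to prove the two inequalities $I(H)\ge n_1+\cdots+n_k-k$ and $I(H)\le n_1+\cdots+n_k-k$ separately, with the lower bound carrying the main content. The crucial structural fact I would establish first is that in $H=H^{(k)}_{n_1,\dots,n_k}$ one can never infect a vertex lying in a part that still contains two or more uninfected vertices. To see this, suppose a set $A$ of infected vertices infects an edge $E$ and that a newly infected vertex $v$ lies in part $V_j$. Because every edge is a transversal, $E\cap V_j=\{v\}$, and since $v$ is newly infected we have $v\notin A$; hence $A$ contains no vertex of $V_j$. If some $v'\in V_j$ with $v'\ne v$ were still uninfected, then $v'\notin E$ and $A\cup\{v'\}$ would again be a partial transversal, so it would be contained in some edge; this makes $v'$ an uninfected vertex outside $E$ that is adjacent to $A$, contradicting the second clause of the infection rule. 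Thus $v$ must be the \emph{unique} uninfected vertex of $V_j$ at the moment it is infected.

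From this I would deduce that the set of uninfected vertices inside any single part can only shrink, and only when its size is exactly one; in particular, a part that starts with at least two uninfected vertices retains at least two uninfected vertices throughout the entire process. The lower bound then follows immediately: if an infection set had fewer than $n_1+\cdots+n_k-k$ vertices, at least $k+1$ vertices would be uninfected, so by the pigeonhole principle some part would begin with two or more uninfected vertices and could never be fully infected, a contradiction. For the matching upper bound I would exhibit an explicit infection set of size $n_1+\cdots+n_k-k$, namely the complement of a set that removes exactly one vertex $u_i$ from each part $V_i$, leaving every part with a single uninfected vertex. Assuming some part $V_m$ has $n_m\ge 2$, it contains an infected vertex $w$; then $A=\{w\}$ infects the edge $\{w\}\cup\{u_i:i\ne m\}$ and in one step infects $u_i$ for every $i\ne m$, after which a second step using the now-infected set $\{u_i:i\ne m\}$ infects the last remaining vertex $u_m$.

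The main obstacle is the structural lemma underpinning the lower bound, and specifically the assertion that a part carrying two or more uninfected vertices can never be cleared; this is precisely where the transversal structure of the edges is used, through the observation that any partial transversal of infected vertices can be completed by an uninfected vertex sitting in any unused part. A secondary point to handle is the degenerate case in which every $n_i=1$: there $H$ is the single edge $V$, so $I(H)=1$ by Proposition~\ref{prop:superedge} rather than $n_1+\cdots+n_k-k=0$, and the upper-bound construction has no infected vertex with which to begin. One therefore reads the statement under the assumption that at least one part has size at least two (equivalently $n_1+\cdots+n_k>k$), which is exactly the range in which the formula is correct.
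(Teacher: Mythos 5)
Your proof is correct and follows essentially the same route as the paper's: the upper bound via the infection set that leaves exactly one uninfected vertex per part, and the lower bound via the observation that a part containing two uninfected vertices can never have either of them infected (the paper delegates this to the exchange argument of Lemma~\ref{lem:complete}, which is the same completion-of-a-partial-transversal idea you spell out). You additionally, and correctly, flag that the statement needs $\sum n_i > k$, since when every $n_i=1$ the hypergraph is a single edge with infection number $1$, not $0$ --- a degenerate case the paper's proof silently passes over.
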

\begin{proof}
  If we choose one vertex in each of $V_i$ for $i = 1, \dots ,k$ to be
  uninfected and the remaining vertices to be infected, then we have
  an infection set of the appropriate size.

  Similar to the case for the complete hypergraph (see the proof of
  Lemma~\ref{lem:complete}), if there are two uninfected vertices in
  some $V_i$, then no set can infect either of these vertices.
\end{proof}

The complete bipartite graph $K_{1,n}$ is also called a
\textsl{flower} or a \textsl{star}. For this graph, the intersection of any two edges is
the fixed single vertex in the first partition. This structure can be generalized
to a hypergraph. If $H = \{E_1, E_2, \dots, E_n\}$ is a hypergraph
with the property that for any two distinct $i, j \in\{1, \dots, n\}$ the
intersection $E_i \cap E_j = A$ (where $A$ is a non-empty set), then
$H$ is called a \textsl{flower}. The sets $E_i$ are called the petals.

\begin{lemma}
  Let $H$ be a hypergraph that is a flower with $p$ petals. Then $I(H) =
  p-1$. 
\end{lemma}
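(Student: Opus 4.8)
The plan is to pin down exactly how infection propagates in a flower and then establish matching upper and lower bounds. Write $A$ for the common core and, for each petal $E_i$, call the vertices of $E_i \setminus A$ the \emph{private} vertices of $E_i$; since $H$ is reduced, each $E_i$ strictly contains $A$, so every petal has at least one private vertex. The first fact I would record is that a private vertex belongs to no edge other than its own petal: if $u \in E_i \setminus A$ also lay in $E_j$ with $j \neq i$, then $u \in E_i \cap E_j = A$, a contradiction. Consequently, if a single private vertex $w$ of $E_i$ is infected, then $\{w\}$ infects $E_i$: it satisfies the first condition of the infection rule, and the second holds vacuously because $w$ is adjacent to no vertex outside $E_i$. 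Thus infecting one private vertex of a petal infects the whole petal, including $A$.

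The second fact concerns the core. If $B \subseteq A$, then for every vertex $v$ the set $B \cup \{v\}$ lies inside whichever petal contains $v$, so $B$ is adjacent to every vertex of $H$. Hence a set $B \subseteq A$ can infect a petal $E_i$ only when every vertex outside $E_i$ is already infected. Together these two facts describe the entire dynamics.

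For the upper bound I would take $W$ to consist of one private vertex from each of $E_1, \dots, E_{p-1}$, so that $|W| = p-1$. By the first fact each of these $p-1$ petals is infected in the first round, which in particular infects $A$. At that stage the only uninfected vertices are the private vertices of $E_p$, so by the second fact $A$ infects $E_p$ and $W$ is an infection set; thus $I(H) \le p-1$.

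The lower bound is the step I expect to be the main obstacle, and I would handle it by a \emph{mutual blocking} argument. Suppose $W$ is an infection set with $|W| \le p-2$. Since each private vertex lies in exactly one petal, at most $p-2$ petals contain a vertex of $W$ among their private vertices, so there exist two petals $E_i, E_j$ none of whose private vertices are initially infected. I would then track the first moment a private vertex of $E_i$ (respectively $E_j$) becomes infected. At the first such moment for $E_i$, the edge being infected must be $E_i$ itself (the only edge meeting its private part), and the infecting set $B$ consists of already-infected vertices of $E_i$, all of which must lie in $A$; by the second fact this forces every private vertex of $E_j$ to have been infected strictly earlier. By symmetry, infecting the private part of $E_j$ would require that of $E_i$ to come first, so neither can precede the other---a contradiction. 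Hence at least one of $E_i, E_j$ never has its private part fully infected, and since that part is nonempty, $W$ cannot be an infection set; therefore $I(H) \ge p-1$. The delicate points are the ordering of the two ``first infection'' times and the fact that a private vertex can only be reached through its own petal; once these are secured, the two bounds give $I(H) = p-1$.
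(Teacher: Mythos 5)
Your proof is correct, and it is in fact more complete than the paper's. For the upper bound you use exactly the paper's construction: one degree-one (private) vertex from each of $p-1$ petals, each of which infects its own petal because it is adjacent to nothing outside it, after which the now-infected core $A$ finishes the last petal since no uninfected vertex remains outside it. Where you diverge is that the paper stops there and never argues that $p-2$ vertices cannot suffice, whereas you supply the lower bound via the mutual-blocking argument: with at most $p-2$ initially infected vertices, two petals $E_i, E_j$ have entirely uninfected private parts; a private vertex of $E_i$ can only be reached by infecting $E_i$ itself, the infecting set at that first moment must lie in $A$ and hence is adjacent to every vertex of $H$, so the whole private part of $E_j$ must already be infected, and symmetrically --- so neither can go first. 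This argument is sound (it does rely on the standing assumption that $H$ is reduced, so that each petal has a nonempty private part, and on $A$ being nonempty as in the paper's definition of a flower; you correctly flag the former). The only caveat worth noting is the degenerate case $p=1$, where the statement as written fails for both your argument and the paper's, but that is an issue with the statement rather than with your proof.
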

\begin{proof}
  An infection set can be formed by taking one vertex of degree 1 from
  all but one of the petals. Each of these vertices can infect the
  edge that contains it. Then any vertex in the intersection of
  all the edges can infect the final edge.
\end{proof}

\section{Interval hypergraphs and cyclic interval hypergraphs}

It is well-known for a graph $G$ that $Z(G) = 1$ if and only if $G$ is
a path, and if $G$ is a cycle, then $Z(G) = 2$ (see \cite{MR2388646}).
In this section we will consider hypergraphs that are analogous to
paths and cycles.

Paths and cycles both have the property that the degree of every
vertex is no more than $2$. For hypergraphs there is an analogous
bound on the infection number.

\begin{lemma}
  Let $H$ be a connected $k$-hypergraph in which every vertex has
  degree no more than $2$, then $I(H) \leq k$.
\end{lemma}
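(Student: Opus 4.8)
The plan is to prove the stronger statement that fully infecting the $k$ vertices of a single edge already infects the whole hypergraph; this immediately yields an infection set of size $k$ and hence $I(H)\le k$. First I would exploit connectivity: since $H$ is connected, its line graph $L(H)$ is connected, so the edges can be ordered as $E_1,E_2,\dots,E_m$ so that for every $i\ge 2$ the edge $E_i$ meets some earlier edge $E_{j(i)}$ with $j(i)<i$ (for instance, a breadth-first ordering of a spanning tree of $L(H)$ rooted at an arbitrary $E_1$). I then take the $k$ vertices of $E_1$ as the initial infected set.

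The core observation, and the place where the degree hypothesis does the real work, is this: if a vertex $a$ lies in two distinct edges, then since $\deg(a)\le 2$ it lies in \emph{exactly} those two edges. Consequently, for $i\ne j$ every vertex of $A:=E_i\cap E_j$ has degree exactly $2$ and belongs only to $E_i$ and $E_j$, so the only edges containing $A$ as a subset are $E_i$ and $E_j$ themselves.

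I would then induct on $i$, showing that after processing $E_1,\dots,E_i$ all of their vertices are infected. The base case is the choice of initial set. For the inductive step, set $A=E_i\cap E_{j(i)}$, which is non-empty and consists of already-infected vertices (they lie in the fully-infected $E_{j(i)}$). I claim $A$ infects $E_i$: condition (1) holds since $A\subseteq E_i$, and for condition (2), if some uninfected $v\notin E_i$ had $A\cup\{v\}$ inside an edge $E'$, then $A\subseteq E'$, so by the core observation $E'\in\{E_i,E_{j(i)}\}$; since $v\notin E_i$ we get $E'=E_{j(i)}$ and $v\in E_{j(i)}\setminus E_i$, contradicting that $E_{j(i)}$ is fully infected. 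Hence $E_i$ becomes infected, completing the induction. As $H$ is connected every vertex lies in some edge, so the edges cover $V(H)$, and the $k$ initial vertices form an infection set.

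The step I expect to require the most care is verifying condition (2) of the infection rule in full generality, in particular covering $|A|\ge 2$ as well as $|A|=1$. Both cases are handled uniformly by the core observation, since even a single shared vertex of $A$ pins down any edge containing $A$ to be $E_i$ or $E_{j(i)}$; the only genuine input is the degree bound, which is exactly the hypothesis. Connectivity is used solely to produce the edge ordering, and no property of reducedness beyond the standing assumption is needed.
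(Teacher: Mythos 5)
Your proof is correct and follows essentially the same route as the paper: both arguments show that any single edge is an infection set and then propagate the infection outward using the degree-at-most-$2$ hypothesis together with connectivity. The only cosmetic differences are that the paper lets a single shared vertex perform each new infection while you use the whole intersection $E_i \cap E_{j(i)}$, and that you make the connectivity step explicit via a breadth-first ordering of the line graph; the verification of condition (2) of the infection rule rests on the identical observation in both cases.
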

\begin{proof}
  We claim that any edge of $H$ is an infection set. Let $E =\{v_1,
  \dots ,v_k\}$ be any edge in $H$, and assume that $v_1, \dots , v_k$
  are all initially infected. Each $v_i$ is contained in only one edge, other
  than $E$, and hence can infect that edge. The same now holds for all
  other vertices in the newly infected edges.  Since $H$ is connected,
  all vertices will be infected under this process.
\end{proof}

Another way to generalize paths to hypergraphs is by \textsl{linear
  hypergraphs}.  A hypergraph $H$ is linear if $|E_i \cap E_j| \leq 1$
for any two edges $E_i$ and $E_j$ of $H$. This implies that any set of
two of more vertices can be in at most one edge, so a set of more
than one vertex will have degree at most one.

\begin{lemma}
  If $H$ is a reduced connected, linear hypergraph and all the
  vertices have degree no more than two, then $I(H) \leq 2$. Further,
  $I(H)=1$ if and only if $H$ has a vertex of degree one.
\end{lemma}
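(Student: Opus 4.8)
The plan is to isolate a single propagation mechanism that runs along the line graph and then feed it two different starting configurations to obtain the two halves of the statement. The observation I would establish first is that in a linear hypergraph any infected set $A$ with $|A|\ge 2$ contained in an edge $E$ automatically infects $E$: since $H$ is linear, a set of at least two vertices lies in at most one edge, so $E$ is the unique edge containing $A$, and there is no vertex $v\notin E$ with $A\cup\{v\}$ inside an edge. Hence the second condition of the infection rule is vacuous. The same reasoning shows that a degree-one vertex $v$ infects its unique edge by itself.

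Next I would prove the key propagation claim: if one entire edge $E$ is infected, then every vertex of $H$ becomes infected. The argument tracks the set $S$ of edges that are completely infected, starting from $E\in S$. If $S$ is not everything, connectivity of $L(H)$ provides an edge $E'\notin S$ meeting some edge in $S$; by linearity the overlap is a single vertex $u$, and since every vertex has degree at most two and $u$ lies in both $E$ and $E'$, the only edges through $u$ are those two. Thus every vertex adjacent to $u$ outside $E'$ lies in the already fully infected edge, so $\{u\}$ infects $E'$ and $S$ grows. Iterating exhausts all edges, hence all vertices.

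With the claim in hand the bound $I(H)\le 2$ is immediate: choose any edge and infect two of its vertices, which infects that edge by the first observation, and the propagation claim then finishes (the only degenerate case, a single-vertex hypergraph, trivially has $I(H)=1$). For the characterisation, the backward direction is the same machine started one step earlier: a degree-one vertex infects its edge on its own, so the propagation claim yields $I(H)=1$. The forward direction requires nothing new, since $H$ is reduced and Lemma~\ref{lem:degreebound} already states that $I(H)=1$ forces $\delta(H)=1$, i.e. the existence of a vertex of degree one.

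The step I expect to require the most care is the propagation claim, and within it the verification of the second infection condition for the singleton $\{u\}$. What makes it go through is exactly linearity together with the degree bound: linearity forces the overlap between a newly reached edge and the infected region to be a single vertex, and the degree-two cap guarantees that vertex has no third edge concealing an uninfected neighbour. Were either hypothesis dropped, the singleton could be adjacent to an uninfected vertex through a third edge and the process could stall, so I would invoke both hypotheses explicitly at that point.
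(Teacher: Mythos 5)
Your proposal is correct and follows essentially the same route as the paper: linearity gives that any two adjacent infected vertices (or a degree-one vertex) infect their unique common edge, connectivity plus the degree-two bound propagates the infection edge by edge, and the forward direction of the characterisation is delegated to Lemma~\ref{lem:degreebound}. The only difference is presentational — you isolate the propagation step as an explicit claim about the singleton $\{u\}$ in the overlap of a fully infected edge with a new one, which the paper leaves implicit in the phrase ``the infection process will infect every vertex.''
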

\begin{proof}
  Let $H$ be a connected, linear hypergraph in which all vertices have
  degree no more than $2$.  Since $H$ is linear, any two adjacent
  vertices are contained in exactly one edge; thus any two adjacent
  vertices can infect the edge that contains them. Since $H$ is
  connected, starting with any two adjacent vertices being initially
  infected, the infection process will infect every vertex in the
  hypergraph.

  If $H$ has a vertex with degree one, then this vertex can infect the
  edge that contains it. Then the vertices in this edge are each in at
  most one other edge, which they can now infect. Continuing like this
  infects all the vertices of the hypergraph. The converse holds by
  Lemma~\ref{lem:degreebound}.
\end{proof}

Next we consider a different generalization of paths in which the 
degrees of the vertices can be more than two.  An \textsl{interval ordering} of
the vertices of a hypergraph is an ordering of the vertices so that
every hyperedge of the hypergraph is an interval of the ordering. We
say that a hypergraph is an \textsl{interval hypergraph} if there
exists an interval ordering of the vertices of the hypergraph.

\begin{lemma}
If $H$ is a reduced interval hypergraph, then $I(H)$ is equal to the number
of connected components.
\end{lemma}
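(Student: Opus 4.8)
The plan is to reduce to the connected case and then show that every connected component has infection number exactly $1$, so that the total equals the number of components. By Proposition~\ref{prop:connected} we have $I(H)=\sum_i I(H_i)$ over the connected components $H_i$, and since any non-empty hypergraph needs at least one initially infected vertex, $I(H_i)\ge 1$ for each $i$. Thus the whole statement follows once I prove that a connected reduced interval hypergraph has infection number $1$. (A component that is a single isolated vertex trivially has infection number $1$, so I may assume the component has at least two vertices and hence, being connected, no isolated vertices.)

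Fix an interval ordering $v_1, v_2, \dots, v_n$ of the vertices, so that every edge is a set of consecutive vertices $\{v_a, v_{a+1}, \dots, v_b\}$. I will show that the single vertex $v_1$ is an infection set. The key is to maintain the invariant that the set of infected vertices is always an initial segment $\{v_1, \dots, v_p\}$, and to show that while $p<n$ the infection can be pushed strictly to the right.

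For the inductive step, suppose $\{v_1,\dots,v_p\}$ is infected with $p<n$. Because every edge is an interval, an edge lies entirely inside $\{v_1,\dots,v_p\}$, entirely inside $\{v_{p+1},\dots,v_n\}$, or else contains both $v_p$ and $v_{p+1}$; call an edge of the last type \emph{straddling}. If no straddling edge existed, the edge set would split across the two blocks and $H$ would be disconnected, so connectivity guarantees a straddling edge. Among all straddling edges choose $E^\ast=\{v_i,\dots,v_m\}$ with the largest right endpoint $m$ (so $i\le p$ and $m\ge p+1$). I then infect $E^\ast$ using the single infected vertex $v_p\in E^\ast$; this adds $v_{p+1},\dots,v_m$ and keeps the infected set the initial segment $\{v_1,\dots,v_m\}$, strictly advancing the frontier.

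The main obstacle, and the only place where care is needed, is verifying the second clause of the Infection Rule for this infection, namely that $v_p$ has no uninfected neighbour outside $E^\ast$. Suppose $v_c$ is uninfected (so $c\ge p+1$) and adjacent to $v_p$; then some edge contains both $v_p$ and $v_c$, and being an interval it contains the whole range between them, so it straddles the gap after $v_p$ and its right endpoint is at least $c$. By the maximality of $m$ we get $c\le m$, and combined with $c\ge p+1>i$ this forces $v_c\in E^\ast$. Hence every uninfected neighbour of $v_p$ already lies in $E^\ast$, so $v_p$ infects $E^\ast$ as claimed. Since $p$ strictly increases at each step and is bounded by $n$, the process terminates with all of $v_1,\dots,v_n$ infected; therefore each connected component has infection number $1$ and $I(H)$ equals the number of connected components.
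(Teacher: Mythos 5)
Your proof is correct, and it follows the same basic strategy as the paper: sweep the interval ordering from left to right, keeping the infected vertices an initial segment. The differences are in the execution, and they are improvements. The paper has the \emph{least uninfected} vertex ``infect the largest edge that contains it,'' which does not literally parse under the infection rule (the infecting set must consist of infected vertices), and it never verifies condition (2) of the rule at the inductive step; you instead let the frontier infected vertex $v_p$ infect the straddling edge with maximal right endpoint, and the maximality is exactly what makes condition (2) check out. You also supply the lower bound $I(H_i)\geq 1$ explicitly and handle isolated-vertex components, which the paper leaves implicit. One further contrast: the paper invokes reducedness to conclude that $v_1$ has degree one, whereas your argument never uses that hypothesis -- which is harmless, since by the earlier proposition on nested edges the infection number is unchanged by deleting subsumed edges, so the lemma holds without it. The only step you gloss over is that a connected component of an interval hypergraph is itself an interval hypergraph under the induced ordering (its vertex set is a contiguous block, since the union of a connecting chain of interval edges is an interval); this is easy but worth a sentence if you want the reduction to the connected case to be airtight.
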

\begin{proof}
  Since $H$ is an interval hypergraph, there is a linear ordering of
  the vertices and this ordering extends to the edges ($E_1 < E_2$ if
  this minimal element of $E_1$ is smaller than the minimal element of
  $E_2$). 

  Let $v_1$ be the first vertex in the ordering, and suppose $v_1 \in
  E$. Since $H$ is reduced, it follows that $v_1$ has degree one, and
  is not adjacent to any other vertices outside of $E$. Thus $v_1$ can
  infect $E$.  Let $v_2$ be the least vertex in the ordering that is
  not infected. Similar to $v_1$, this vertex can infect the largest
  edge that contains it. Continuing like this will eventually infect
  all the vertices in the connected component that contains $v$. The
  result then follows from Proposition~\ref{prop:connected}.
\end{proof}

To generalize a cycle, we define a \textsl{hypercycle} to be a set of
edges $E_1,E_2, \dots, E_k$ with $E_i \cap E_{j} \neq \emptyset$ if and
only if $|i-j| \equiv 1 \pmod {k}$. In a hypercycle the maximum degree
of a vertex is two.

\begin{proposition}
  Let $H$ be a hypercycle, then $I(H) \leq 2$. Further, $I(H)=1$ if
  and only if $H$ has a vertex of degree $1$.
\end{proposition}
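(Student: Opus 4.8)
The plan is to exploit the cyclic structure directly. For each $i$ write $A_i = E_i \cap E_{i+1}$, with indices taken modulo $k$, so that $A_k = E_k \cap E_1$. By the definition of a hypercycle each $A_i$ is non-empty, and since every vertex has degree at most two, the sets $A_1, \dots, A_k$ are pairwise disjoint and every vertex of $E_i$ lies either in $A_{i-1}$, in $A_i$, or (if it has degree one) in neither. The engine of both parts is a single observation: if a non-empty set $B$ of infected vertices is contained in $E_i$ but in no other edge, then condition~(2) of the infection rule is vacuous and $B$ infects $E_i$. Moreover, once an edge $E_i$ is completely infected, the set $A_i$ is infected, is contained only in $E_i$ and $E_{i+1}$ (its vertices have degree two), and every vertex of $E_i \setminus E_{i+1}$ is infected; hence $A_i$ infects $E_{i+1}$. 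Iterating this ``domino'' step (and its mirror image using $A_{i-1}$) around the cycle shows that completely infecting any one edge forces the whole hypercycle, since a hypercycle is connected and every vertex lies in some edge.

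For the bound $I(H) \le 2$ I would fix $E_1$ and choose $x \in A_k = E_k \cap E_1$ and $y \in A_1 = E_1 \cap E_2$. For $k \ge 3$ the disjointness of the $A_i$ guarantees $x \ne y$, and the only edge containing both $x$ and $y$ is $E_1$, because $x$ lies only in $E_k, E_1$ and $y$ only in $E_1, E_2$. Thus $\{x,y\}$ is contained in exactly one edge, so by the observation above it infects $E_1$, and the domino argument then shows $\{x,y\}$ is an infection set. The degenerate cases $k \le 2$, where ``consecutive'' wraps around, reduce to two overlapping edges and can be handled directly as a flower.

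For the equivalence, the forward implication is the easy instance of the domino argument: if $v$ has degree one, then $\{v\}$ is contained in exactly one edge $E_i$, hence infects $E_i$, and propagation around the cycle gives $I(H) = 1$. The converse is immediate from material already established: $H$ is reduced, so Lemma~\ref{lem:degreebound} says that $I(H)=1$ forces $\delta(H)=1$, i.e.\ $H$ has a vertex of degree one.

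The step I expect to require the most care is verifying the precondition of each domino move, namely that when $A_i$ is used to infect $E_{i+1}$ the whole of $E_i$ (in particular $E_i \setminus E_{i+1}$) is already infected; this needs an honest induction on the number of edges infected so far, together with the disjointness of the $A_i$ to be sure $A_i$ lies in no edge other than $E_i$ and $E_{i+1}$. The small-$k$ boundary cases, where $E_{i-1}$ and $E_{i+1}$ might be forced to be adjacent or to coincide, are the other place where one must check the intersection structure by hand rather than appealing to disjointness.
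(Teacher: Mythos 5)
Your proof is correct and follows essentially the same route as the paper's: seed with one vertex from each of two consecutive intersections $E_i\cap E_{i+1}$ to infect a single edge, propagate around the cycle via the intersections, and invoke Lemma~\ref{lem:degreebound} for the converse of the degree-one characterization. Your write-up just makes explicit the disjointness of the intersections and the small-$k$ cases, which the paper leaves implicit.
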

\begin{proof}
If $H$ does not have a vertex of degree one, then pick one vertex in
$E_1 \cap E_2$ and one vertex in $E_2 \cap E_3$. These two vertices
can infect the edge $E_2$. Then the vertices in $E_2 \cap E_3$ can
infect $E_3$. Continuing like will infect all the remaining vertices in
the hypergraph. By Lemma~\ref{lem:degreebound}, $I(H)=2$, unless $H$
has vertex of degree one.

If $v \in E_i$ is a degree one vertex, then $v$ can infect $E_i$.  Then the
remaining vertices in $H$ can be infected as before, so $I(H) =1$.
\end{proof}

The vertices of a hypergraph have an \textsl{arc-ordering} if the
vertices can be cyclically ordered so that every edge is an
$\textsl{arc}$, that is a set of vertices that are consecutive in this
ordering. A hypergraph is a \textsl{circular-arc hypergraph} if there
exists an arc-ordering of the vertices in the hypergraph.

One example of a circular-arc hypergraph is what we call the
\textsl{$t$-tight $k$-uniform cycle on $n$-vertices}. This hypergraph is
denoted by $C^{(k)}_{n}(t)$, and for it to be well-defined, we must
assume that $k-t$ divides $n$.  The vertex set of $C^{(k)}_{n}(t)$ is $\{1,
\dots, n\}$ and the following is the hyperedge set:
\begin{align*}
E_1 &=  \{1, 2, \dots, k\},  \\
E_{k-t+1} &= \{k-t+1, k-t+2, \dots, 2k-t\},  \\
E_{2k-2t+1} &= \{2k-2t+1, 2k-2t+2, \dots, 3k-2t\}, \\
& \vdots  \\
E_{n-k+t} &=  \{n-k+t, \dots, t-1,t\} \\
\end{align*}
(entries taken modulo $n$). For $i = 0, \dots, \frac{n}{k-t} - 1$,
the edge $E_{i(k-t)+1}$ is an arc that starts with the vertex
$i(k-t)+1$. The function $f_i:V(C^{(k)}_{n}(t)) \rightarrow
V(C^{(k)}_{n}(t))$ defined by $f_i(v) = v + i(k-t)$ (modulo $n$) is an
automorphism of the hypergraph that maps edge $E_j$ to
$E_{i(k-t)+j}$. So $C^{(k)}_{n}(t)$ is edge transitive.

We consider the special case when $t = k-1$; this is the largest
possible value of $t$, and $k-t = 1$, so such a hypergraph is
well-defined for all values of $k$ and $n$. We start with a simple
lemma about the structure of this graph.

\begin{lemma}\label{lem:any2in2}
  If $k+1 \leq n < 2k-1$, then any pair of vertices from
  $C^{(k)}_{n}(k-1)$ is contained in at least two edges.
\end{lemma}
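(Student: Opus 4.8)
The plan is to begin by making the edge structure of $C^{(k)}_{n}(k-1)$ completely explicit. Since $t = k-1$ forces $k-t = 1$, the automorphisms $f_i(v) = v + i(k-t)$ of the text become all cyclic rotations $v \mapsto v+i \pmod n$, and the edges are precisely the $n$ \emph{windows} $E_i = \{i, i+1, \dots, i+k-1\}$ (indices mod $n$), one for each starting vertex. So an edge is nothing more than a set of $k$ cyclically consecutive vertices, and the lemma asks me to count, for an arbitrary pair of vertices, how many such windows contain both.

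Using the cyclic automorphisms I may assume the two vertices are $0$ and $m$ with $1 \le m \le n-1$, where $m$ is the forward distance between them. The key structural observation is that a window $E_i$ containing both $0$ and $m$ must, being an arc of consecutive positions, contain one of the two arcs joining them: either the forward arc $\{0,1,\dots,m\}$ on $m+1$ vertices, or the complementary backward arc on $n-m+1$ vertices. Because $k < n$, no window can contain all $n$ vertices, so these two cases are mutually exclusive and their counts simply add.

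Next I would count each contribution directly. A length-$k$ window contains the forward arc in exactly $\max(0,\,k-m)$ ways and contains the backward arc in exactly $\max(0,\,k-(n-m))$ ways; equivalently, phrasing ``$E_i$ contains $0$'' and ``$E_i$ contains $m$'' as membership of the start index $i$ in two length-$k$ arcs of $\mathbb{Z}_n$ offset by $m$, the common edges are their intersection, of size $\max(0,k-m)+\max(0,k-n+m)$. The final step is to verify this is at least $2$ for every $m \in \{1,\dots,n-1\}$. Splitting the range into $1 \le m \le n-k$, then $n-k+1 \le m \le k-1$, then $k \le m \le n-1$ (a genuine partition precisely because $n \le 2k-2$), the count equals $k-m$, $2k-n$, and $m-(n-k)$ respectively, each of which is at least $2k-n$.

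The main obstacle is bookkeeping the cyclic wraparound so that the two arc-containment counts are correct and genuinely disjoint; once that is pinned down, everything collapses to the single inequality $2k-n \ge 2$, which is exactly the hypothesis $n < 2k-1$ (that is, $n \le 2k-2$), so the bound is tight. I would guard against off-by-one errors by checking the formula on a small instance such as $k=3,\ n=4$, where every pair lies in exactly $2k-n = 2$ windows.
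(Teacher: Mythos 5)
Your proof is correct and follows essentially the same route as the paper: both arguments identify the edges of $C^{(k)}_{n}(k-1)$ as the $n$ windows of $k$ cyclically consecutive vertices and count, via a case split on the cyclic distance between the two vertices, how many windows contain both. Your version is somewhat more explicit, deriving the exact count $\max(0,k-m)+\max(0,k-n+m)$ and checking it is at least $2k-n\geq 2$ on each of the three ranges, where the paper simply lists the relevant runs of edges in two cases.
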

\begin{proof}
  Let $x$ and $y$ be two vertices with $x<y$. If $y-x \leq n-k$, since
  $n-k\leq k-1$, then $x \notin E_{y}$, but both $x$ and $y$ will be
  in the edges $E_{y-k+1},E_{y-k+2},\dots,E_{x}$ (subscripts taken
  modulo $n$).  Conversely, if $y-x >n-k$, then both $x$ and $y$ are
  in the edges $E_{x-k+1},\dots, E_{y}$. This shows that for these
  values of $n$ and $k$ any pair of vertices will be in at least two
  edges.
\end{proof}

Next we give the exact value of the infection number for
$C^{(k)}_{n}(k-1)$.

\begin{lemma}
 Let $H= C^{(k)}_{n}(k-1)$.
\begin{enumerate}
\item If $n \geq 2k-1$, then $I(H)=2$.
\item If $k+1 \leq n < 2k-1$, then $I(H)=\min\{i+1, n\!-\!k\!+\!1\}$ where
\[
i = \left \lceil \frac{k-1}{n-k} \right \rceil .
\]
\item If $n=k$, then $I(H) = 1$.
\end{enumerate}
\end{lemma}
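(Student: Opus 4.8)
The plan is to reduce all three cases to a single optimization, writing $g=n-k$ and recalling that the edges of $H=C^{(k)}_{n}(k-1)$ are the $n$ arcs $E_j=\{j,j+1,\dots,j+k-1\}$ (indices modulo $n$), that $H$ is vertex- and edge-transitive, and that every vertex has degree $k$. I first dispose of the two easy cases. When $n=k$ the edge $E_1$ is the whole vertex set, so Proposition~\ref{prop:superedge} gives $I(H)=1$. When $n\ge 2k-1$ the hypergraph is reduced with minimum degree $k\ge 2$, so Lemma~\ref{lem:degreebound} forces $I(H)\ge 2$; for the matching upper bound I would check (the computation is the one in Lemma~\ref{lem:any2in2}, now in the opposite range, and uses $n\ge 2k-1$) that the pair $\{1,k\}$ lies in the single edge $E_1$, so $\{1,k\}$ infects $E_1$, after which a \emph{rolling} argument infects everything: once $\{1,\dots,m\}$ is infected with $k\le m<n$, the $(k-1)$-arc $\{m-k+2,\dots,m\}$ lies in exactly the two edges $E_{m-k+1},E_{m-k+2}$ and infects the latter because its only outside neighbour $m-k+1$ is already infected, gaining vertex $m+1$.

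The heart of the lemma is case~2, $k+1\le n\le 2k-2$. The key structural reduction is the identity
\[
I(H)=\min_{\emptyset\neq A\subseteq E_1}\bigl(|A|+|\Gamma(A)\setminus E_1|\bigr),
\]
where $\Gamma(A)=\bigcup_{E\supseteq A}E$ is the set of vertices lying with $A$ in a common edge. The inequality ``$\ge$'' holds because any infection set $S$ must admit a \emph{first} infection, of some edge $E$ by some $A\subseteq S\cap E$, and at that moment every vertex of $\Gamma(A)\setminus E$ is already infected, so $S\supseteq A\cup(\Gamma(A)\setminus E)$; edge-transitivity lets me take $E=E_1$. The inequality ``$\le$'' holds because $A\cup(\Gamma(A)\setminus E_1)$ first-infects $E_1$ and then the rolling argument above (valid for every $n>k$) spreads the infection to all of $V$. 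To evaluate the right-hand side I would use the elementary observation that a set $X\subseteq\mathbb{Z}_n$ lies in a length-$k$ arc iff its largest gap has size at least $g$; hence, for $v\notin E_1$, we have $v\in\Gamma(A)$ iff the largest gap of $A\cup\{v\}$ is at least $g$.

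For the upper bound I would exhibit two candidates. The ``spread'' set $\{1,1+g,1+2g,\dots,k\}$ has $i+1$ elements, where $i=\lceil(k-1)/g\rceil$, all of its consecutive gaps are at most $g-1$, and by the gap criterion its spillover $\Gamma(A)\setminus E_1$ is empty, giving value $i+1$. Proposition~\ref{prop:upperbound} gives the second candidate, the trivial bound $n-k+1=g+1$. Taking the smaller of the two yields $I(H)\le\min\{i+1,\,g+1\}$.

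The lower bound is the main obstacle: I must show $|A|+|\Gamma(A)\setminus E_1|\ge\min\{i+1,g+1\}$ for every nonempty $A$. Writing $a_1=\min A$, $a_m=\max A$, $\sigma=a_m-a_1$, and $D$ for the largest internal gap of $A$, the argument splits: if $D\ge g$, or if the ``left'' blockers $\{v\le a_1+k-1\}$ and ``right'' blockers $\{v\ge a_m+g+1\}$ already cover all $g$ external vertices, then the spillover is all of $\{k+1,\dots,n\}$ and the value is at least $1+g=g+1$; otherwise the two blocker sets are disjoint and uncapped, the spillover is exactly $(a_1-1)+(k-a_m)$, and combining this with $|A|\ge 1+\lceil\sigma/g\rceil$ (from consecutive gaps $\le g$) gives
\[
|A|+|\Gamma(A)\setminus E_1|\ \ge\ k-\sigma+\lceil\sigma/g\rceil .
\]
Since $\sigma\mapsto k-\sigma+\lceil\sigma/g\rceil$ is non-increasing on $0\le\sigma\le k-1$ and equals $i+1$ at $\sigma=k-1$, the value is at least $i+1$. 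The delicate points where I expect to spend the most care are the exact bookkeeping of the blocker counts — the capping and overlap thresholds $a_1\le g+1$, $a_m\ge k-g$, $\sigma\ge k-g$ — and the monotonicity of the ceiling expression, since this is precisely where an off-by-one error could slip in.
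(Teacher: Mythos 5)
Your proposal is correct and takes essentially the same route as the paper: both treat $n=k$ and $n\ge 2k-1$ via the single edge / the pair $\{1,k\}$ plus the rolling argument, both use the spread set $\{1,1+(n-k),\dots,k\}$ for the upper bound, and both prove the lower bound by reducing to a first-infected edge $E_1$ and counting the external neighbours of the interval spanned by the infecting set, closing with the monotonicity of $k-\sigma+\lceil \sigma/(n-k)\rceil$. The only real difference is packaging: you state the lower bound as the clean identity $I(H)=\min_{\emptyset\ne A\subseteq E_1}(|A|+|\Gamma(A)\setminus E_1|)$ and evaluate the minimum, whereas the paper runs the equivalent count as a proof by contradiction; the blocker tallies $(a_1-1)+(k-a_m)=k-1-\sigma$ agree with the paper's $\min\{n-k,\,k-(v_\ell-v_1+1)\}$, so the off-by-one thresholds you flag are exactly the points the paper also has to handle, and they work out.
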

\begin{proof}

  Statement (3) is trivial, since if $n= k$, the hypergraph has
  only one edge. So we consider Statements (1) and (2).

  Observe that once one edge is infected, the remainder of the
  hypergraph can be infected.  For example, if $E_{1}$ is infected,
  then the set $\{2, \dots, k\}$ will infect $E_{2}$.  Subsequently,
  $\{3, \dots, k+1\}$ will infect $E_{3}$ and so on.

  If $n \geq 2k-1$, then the set $A = \{1,k\}$ is a subset of $E_1$ and
  no other edge. Thus if $A$ is initially infected, it can infect the
  remaining vertices in $E_1$.  Finally, since no vertex is contained
  in only one edge, the infection number cannot be one.  This proves statement (1).

  Assume that $k+1 \leq n < 2k-1$. To prove statement (2), we
  first show that infection sets of size $i+1$ and $n-k+1$ are
  possible for all $n$ and $k$.  

  As shown in Proposition~\ref{prop:upperbound}, an infection set of
  size $n-k+1$ can be constructed for any non-empty hypergraph. To see
  that an infection set of size $i+1$ can be constructed for $H=
  C^{(k)}_{n}(k-1)$, let 
\[ 
i = \left \lceil \frac{k-1}{n-k} \right  \rceil
\]
and consider the subset 
\[
A=\left\{1, \,  n\!-\!k\!+\!1,\,  2(n\!-\!k)\!+\!1, 
  \dots, (i-1)(n\!-\!k)\!+\!1, \, k\right\} \subset E_1.
\]

If $v_{1}$ and $v_{2}$ are vertices in $A$ such that $v_{2}$ follows
$v_{1}$ in the order shown, then $v_{2} - v_{1} \leq n-k$ and so
$v_{1} \notin E_{v_{2}}$.  Thus $A$ only occurs as a subset of $E_{1}$
and so can infect $E_{1}$.

We now show by contradiction that an infection set of size less than
$\min\{i+1, n-k+1\}$ is not possible. Assume that $B$ is an infection
set of size less than $ \min\{i+1, n-k+1\}$. Then $|B| \leq i $ and
$|B| \leq n-k$ and there are at least $k$ uninfected vertices.  Since
$H$ is edge transitive, we can assume that $B$ will first infect the
edge $E_{1}$.  Let $B' = B \cap E_{1}$.  Since $B$ infects $E_{1}$,
there will be no uninfected vertices outside of $E_{1}$ that are
adjacent to $B'$.

Suppose $B'$ contains only one element, $x$.  Then there is at least
one uninfected vertex not in $E_{1}$ and, since $n < 2k-1$, by
Lemma~\ref{lem:any2in2} it will be in an edge with $x$ other than
$E_1$.  This is a contradiction, so $B'$ must contain more than one
vertex.

Let $B' =\{v_{1},\dots,v_{\ell}\}$ where $v_{1} < v_{2} < \cdots <
v_{\ell}$ and $2 \leq \ell \leq \min\{i, n-k\}$. If $v_{(j+1)} - v_{j}
> n-k$ for any $j \in \{1,\dots,\ell-1\}$, then $v_{j} \in
E_{v_{(j+1)}}$ from which it follows that $B' \subseteq
E_{v_{(j+1)}}$. There are at least two uninfected vertices that are
not in $E_{1}$. These vertices will be in $E_{v_{(j+1)}}$ and thus will
be adjacent to $B'$.  This contradicts the assumption that $B'$
infects $E_1$. Thus we must have $v_{(j+1)} - v_{j} \leq n-k$ for all
$j \in \{1,\dots,\ell-1\}$.

Let $C= \{v_{1}, v_{1}+1, v_{1}+2, \dots, v_{\ell}\}$.  In other
words, $C$ is the interval of length $v_{\ell} - v_{1} +1$ starting
with $v_{1}$. Note that $B' \subseteq C$.  The vertices in the edges
either starting with $v_1$ or ending with $v_\ell$ are all adjacent to
$C$. If $n$ is sufficiently small, these are all vertices in the
hypergraph (outside of $C$), otherwise they are a set of vertices of
size $2(k-(v_{\ell} - v_{1} +1))$.  Thus the number of vertices
adjacent to $C$ (but not in $C$) is equal to
\[
\min\{n-(v_{\ell} - v_{1} +1),\, 2(k-(v_{\ell} - v_{1} +1))\}.
\] 
Since $k-(v_{\ell} - v_{1} +1)$ of these vertices are in $E_{1}$, the
number of vertices not in $E_{1}$ that are adjacent to $C$
is $$\min\{n-k, k-(v_{\ell} - v_{1} +1)\}.$$ The number of infected
vertices adjacent to $C$ and not in $E_{1}$ is at most $|B| - \ell$.
We now show that this number is less than the total number of vertices
adjacent to $C$ and not in $E_{1}$. This proves that there is an
uninfected vertex, not in $E_1$, that is adjacent to $B'$, which is a
contradiction.

 Suppose that the number of vertices adjacent to $C$ and not in
 $E_{1}$ is $n-k$ (i.e. $n-k \leq k- (v_{\ell} - v_{1} +1)$). Since
 $|B| \leq n-k$ and $\ell \geq 2$, it follows that $|B| - \ell < n-k$
 and so there is an uninfected vertex outside of $E_{1}$ that is
 adjacent to $B'$.

 Now suppose that $k-(v_{\ell} - v_{1} +1) < n-k$.  It follows from
 $v_{(j+1)} - v_{j} \leq n-k$ for all $j \in \{1,\dots, \ell-1\}$ that
 $v_{\ell} - v_{1} \leq (\ell-1)(n-k)$.  Therefore, 
\[
k-(v_{\ell} - v_{1} +1) \geq k-1- (\ell-1)(n-k) = \left(\frac{k-1}{n-k}\right)(n-k)
 - (\ell-1)(n-k).
\]
By definition $i-1 < \frac{k-1}{n-k}$, 
\[
k-(v_{\ell} - v_{1} +1) > (i-\ell)(n-k) \geq (|B|-\ell)(n-k) \geq |B| - \ell
\] 
and again there is an uninfected vertex not in $E_{1}$ that is
adjacent to $B'$.
\end{proof}

We consider one other special case, namely when $n=k+1$; this is the
smallest non-trivial value for $n$. In this case, the number of edges
is $\frac{n}{k-t}$, and each edge misses exactly one element.

\begin{proposition}
  Let $t,k$ and $n$ be integers and assume that $k-t$ divides $k+1$.
  The infection number of $C_{k+1}^{k}(t)$ is equal to $\frac{k+1}{k-t} - 1$.
\end{proposition}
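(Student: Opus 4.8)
The plan is first to make the structure of $H=C^{(k)}_{k+1}(t)$ completely explicit. Write $d=k-t$ and $m=\frac{k+1}{d}$, so $H$ has exactly $m$ edges, and since each edge is an arc of length $k=(k+1)-1$, every edge omits exactly one vertex. A short computation shows that the omitted vertices are precisely the multiples of $d$ modulo $k+1$; call this set $M=\{d,2d,\dots,md=k+1\}$, so $|M|=m$. Thus each edge has the form $E_x=V\setminus\{x\}$ for a distinct $x\in M$, every vertex of $M$ has degree $m-1$, and every vertex of $V\setminus M$ lies in all $m$ edges. I would also record the adjacency rule in these terms: a set $A$ is adjacent to a vertex $w$ exactly when $M\not\subseteq A\cup\{w\}$, since an edge contains $A\cup\{w\}$ if and only if some element of $M$ avoids $A\cup\{w\}$.

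For the upper bound $I(H)\le m-1$, I would exhibit the explicit set $W=M\setminus\{x_0\}$ for a fixed $x_0\in M$. Because $W\cup\{x_0\}=M$ is contained in no edge, $x_0$ is not adjacent to $W$; as $x_0$ is the only vertex outside $E_{x_0}$, condition (2) of the infection rule holds and $W$ infects $E_{x_0}=V\setminus\{x_0\}$. Every vertex except $x_0$ is now infected, and any one of the $m-1$ edges through $x_0$ is then fired by an already-infected vertex, so $W$ is an infection set of size $m-1$.

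The real content is the lower bound, namely that no set of size less than $m-1$ infects $H$. The natural strategy is a charging argument: one first checks that an edge $E_x$ can be fired only once $x$ has been infected (because for $m\ge 3$ every pair of vertices is adjacent, so the lone vertex $x$ outside $E_x$ must already be infected), and one then tries to charge each firing to a distinct seeded vertex of $M$, forcing $|B\cap M|\ge m-1$ for any infection set $B$.

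The main obstacle is that this charging appears to break down, and I would expect to have to confront it directly. The difficulty is that the edges overlap in all but two vertices: the instant a single vertex $x\in M$ is infected, the edge $E_x=V\setminus\{x\}$ fires and infects \emph{every} other vertex, including all remaining vertices of $M$, after which all edges fire. In particular a single well-placed seed already triggers the whole cascade, and indeed Proposition~\ref{prop:upperbound} already gives the unconditional bound $I(H)\le n-k+1=2$ when $n=k+1$. Reconciling the claimed value $m-1$ with this cascade is therefore the crux: the charging argument can only succeed while $m-1\le 2$, i.e.\ $m\le 3$, so the heart of the proof is to pin down exactly which seeds avoid starting the cascade and to confirm that $m-1$ is the correct count in the regime where the bound of Proposition~\ref{prop:upperbound} is not yet binding.
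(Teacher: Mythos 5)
Your structural setup and your upper bound are correct and coincide with the paper's: writing $d=k-t$ and $m=\frac{k+1}{d}$, the $m$ edges are $E_x=V\setminus\{x\}$ for $x$ in the set $M$ of multiples of $d$ modulo $k+1$, and $W=M\setminus\{x_0\}$ is contained only in $E_{x_0}$, infects it, and the lone remaining vertex $x_0$ is then infected, giving $I(H)\le m-1$. What you do not supply is a lower-bound argument, so as a proof the proposal is incomplete. However, the obstruction you identify is genuine and is not something you could have argued around: the stated lower bound is false whenever $m>3$. As you note, if two vertices are seeded and one of them is some $x\in M$, then the other one infects $E_x=V\setminus\{x\}$ (the only vertex outside $E_x$ is $x$, which is already infected, so condition (2) of the infection rule is vacuous), and every vertex is infected. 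Combined with $\delta(H)\ge 2$ for $m\ge 3$ and Lemma~\ref{lem:degreebound}, this gives $I(H)=2$ for every $m\ge 3$, consistent with Proposition~\ref{prop:upperbound} (which already forces $I(H)\le n-k+1=2$) and, for $t=k-1$, with Lemma~\ref{lem:complete}, since $C^{(k)}_{k+1}(k-1)$ is the complete hypergraph $H^{(k)}_{k+1}$ with infection number $2$, not $k$.

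For comparison, the paper's lower-bound argument takes a set $A$ of size $m-2$, observes that at least two edges $E_x,E_y$ have their omitted vertices $x,y$ outside $A$, and concludes that $A$ ``cannot infect them.'' That much is true (the uninfected vertex $x$ outside $E_x$ lies together with $A$ in $E_y$, and symmetrically), but the argument silently ignores the edges $E_z$ with $z\in A\cap M$: any single infected vertex of $E_z$ infects $E_z$ at once, and $E_z$ contains all of $V\setminus\{z\}$, so the whole hypergraph cascades. This is exactly the failure mode you describe. The proposition as stated is therefore correct only when $\frac{k+1}{k-t}\le 3$ (where $m-1\le 2$ and your upper bound meets the degree/adjacency lower bounds), and your instinct that the claim must be reconciled with Proposition~\ref{prop:upperbound} pinpoints an error in the paper rather than a defect in your own approach. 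To turn your proposal into a complete (corrected) result, you would prove $I(H)=1$ for $m=2$ (the vertex $d$ has degree one), $I(H)=2$ for $m\ge 3$ by the cascade above, and note that this equals $m-1$ precisely when $m\le 3$.
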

\begin{proof}
  To construct an infection set of this size take the vertex missing
  from every edge except one (call this edge $E$).  These vertices
  only occur together in the edge $E$, so they can infect $E$. This
  leaves only one uninfected vertex (the vertex missing from $E$) which
  can be infected by any of the vertices in $E$.

  Let $A$ be a set of size $\frac{k+1}{k-t} - 2$. Each of the
  $\frac{k+1}{k-t}$ edges does not include a single vertex. Thus are
  there at least two edges that miss the vertices that are not contained
  in the set $A$. So $A$ must be a subset of both of these edges,
  and hence cannot infect them.
\end{proof}

\section{Hypergraphs from $t$-designs}

For a hypergraph $H$ there must be a set of size no more than $I(H)$
that causes the first infection. For this to be possible, this set
must be contained in exactly one edge.  Conversely, if every subset of
vertices of size $t$ is contained in at least two edges, we can
conclude that the infection number is strictly larger than $t$. This
observation is useful in determining the infection number of
hypergraphs that are themselves combinatorial designs.

A $2$-$(n,k,1)$ design is a collection of $k$-sets (called
\textsl{blocks}) from the base set $\{1, \dots, n\}$ such that each
pair from the base set occurs in exactly one $k$-set. The blocks of
any $2$-$(n,k,1)$ design form a linear hypergraph with
$\frac{n(n-1)}{k(k-1)}$ edges.  If a $2$-$(n,k,1)$ design has
$n=k^2-k+1$, then the design is a \textsl{symmetric design}; in this
case any two blocks intersect. Further, given a $2$-$(n,k,1)$ design,
a sub-design is simply a collection of blocks from the design that
forms a $2$-$(m,k,1)$ design for some $m <n$. For more on designs
see~\cite{MR2246267}, or any standard reference on design theory.

\begin{proposition}
  Let $H$ be a hypergraph in which the edges form a symmetric design
  with parameters $2$-$(k^2-k+1,k,1)$. If $k\geq 3$, then the
  infection number of $H$ is $3$.
\end{proposition}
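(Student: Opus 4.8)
The first thing I would record is that a symmetric $2$-$(k^2-k+1,k,1)$ design is precisely a projective plane of order $q=k-1$: there are $k^2-k+1$ points and the same number of blocks (lines), every point lies on exactly $k$ lines, every line has $k$ points, any two points determine a unique line, and any two lines meet in exactly one point. Since $k\geq 3$ gives $q\geq 2$, the plane is non-degenerate, and all of the arguments below are read off directly from these incidence facts. The plan is to get the lower bound $I(H)\geq 3$ and the upper bound $I(H)\leq 3$ separately.

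For the lower bound I would show that no two vertices form an infection set. By the observation preceding the statement, every single vertex lies in $k\geq 3\geq 2$ lines, so no singleton can cause an infection; hence $I(H)\geq 2$ and the first infection from a size-$2$ set $W=\{x,y\}$ must be produced by $W$ itself. As $x,y$ lie on a unique line $E_1$, which is the only edge containing $\{x,y\}$, the pair does infect $E_1$. The crux is that the process then halts: once $E_1$ is the entire infected set, any infected subset of size at least two still lies only in $E_1$ and re-infects nothing new, while any single $z\in E_1$ has its uninfected neighbours spread over the $k-1\geq 2$ lines through $z$ other than $E_1$, so $z$ is always adjacent to uninfected vertices off any candidate edge and cannot infect. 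Thus the derived set of $W$ is just $E_1\neq V(H)$, giving $I(H)\geq 3$.

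For the upper bound I would exhibit an infection set of size three. Choose $x,y$ and a third vertex $z$ not on the line $E_1=xy$ (such $z$ exists since $|E_1|=k<k^2-k+1$). The pair $\{x,y\}$ infects $E_1$; then $\{z,x\}$, lying in the unique line $E_2=zx\neq E_1$, infects $E_2$, and $E_1\cap E_2=\{x\}=:p$. It then suffices to prove the key step that two fully infected lines meeting at $p$ infect the whole plane. For this I would show every point $w\notin E_1\cup E_2$ lies on a line $\ell$ meeting $E_1$ at some $a\neq p$ and $E_2$ at some $b\neq p$: among the $k$ lines through $w$ only the line $wp$ passes through $p$, and each of the other $k-1$ lines meets both $E_1$ and $E_2$ at points other than $p$. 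Each such $\ell$ contains the already-infected points $a,b$ whose unique common line is $\ell$, so $\{a,b\}$ infects $\ell$, and in particular infects $w$; as this covers every remaining point, $W=\{x,y,z\}$ is an infection set and $I(H)\leq 3$.

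The step I expect to be the main obstacle is the completeness check inside the key step, namely verifying that the lines $ab$ with $a\in E_1\setminus\{p\}$ and $b\in E_2\setminus\{p\}$ genuinely sweep out every point off $E_1\cup E_2$, with no point stranded on a line through $p$. This is exactly where the count of the $k$ lines through an external point must be combined with the unique-intersection property, and it is worth doing carefully; by contrast the ``stuck'' computation in the lower bound is essentially the same incidence count run in reverse and should cause no extra trouble.
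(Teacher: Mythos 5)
Your proof is correct and follows essentially the same route as the paper: the lower bound comes from observing that any pair infects only its unique line and the process then stalls, and the upper bound uses a triple consisting of two points of a line together with a point off that line. The only difference is cosmetic --- the paper sweeps the remaining points via the pencil of $k$ lines through the external point and the count $k+1+k(k-2)=k^2-k+1$, whereas you first infect a second line and then cover each external point by a cross-line avoiding $p$; both versions of the final step are sound.
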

\begin{proof}
  Assume the vertices of $H$ are $\{1, \dots, n\}$ (and $n =
  k^2-k+1$). Each pair of vertices is contained in exactly one edge of
  $H$; thus any pair can infect the edge that contains it. If $k>2$
  and all the vertices in a single edge are infected, and no other
  vertices are infected, then the process stops. Thus the infection
  number of $H$ must be greater than $2$.

  Assume, without loss of generality, that $E=(1,2, \dots, k)$ is a
  block in $H$.  We claim that the set $\{1,2, k+1\}$ (or any triple,
  not contained in a single block) can infect the hypergraph.

  First the vertices $1,2$ can infect the edge $E$. At this stage there are
  $k+1$ vertices that are infected. Then for any $i \in \{1, \dots,
  k\}$ the pair $i, k+1$ can infect the edge that contains both $i$
  and $k+1$; call this edge $E_i$. For distinct $i,j$ the intersection
  of $E_i$ and $E_j$ is exactly $k+1$ (since $H$ is linear). So at
  this stage $k(k-2)$ new vertices are infected. Since $k(k-2) + k+1 =
  k^2-k+1 =n$, all the vertices in $H$ are infected.
\end{proof}

The natural generalization of $2$-$(n,k,1)$ designs are $t$-$(n,k,1)$
designs.  A $t$-$(n,k,1)$ design is a set of $k$-subsets (called
blocks) from $\{1, \dots, n\}$ with the property that every $t$-subset
occurs in exactly one of the $k$-subsets.  We will consider
hypergraphs in which the edge set forms a $t$-$(n,k,1)$ design. Note
that any $t$-set, will occur in a single block, and any $(t-i)$-set
will occur in exactly $\binom{n-(t-i)}{i}/\binom{k-(t-i)}{i}$
blocks. Just as for $2$-designs, the infection number for a hypergraph
whose edges form a $t$-$(n,k,1)$ design must be at least $t$.

In this case, we need to consider sub-hypergraphs.  If $H$ is a
hypergraph and $W$ is a subset of vertices in $H$, then the
\textsl{sub-hypergraph induced by $W$} is the hypergraph with vertex
set $W$ and the edges are exactly the edges from $H$ which only
contain vertices from $W$.

\begin{theorem}\label{thm:derivedmakessubdesign}
  Let $H$ be a hypergraph in which the edges form a $t$-$(n,k,1)$
  design.  For a set $W$ of vertices from $H$, let $I_W$ denote the
  derived set of $W$.  The sub-hypergraph of $H$ induced by $I_W$
  is either a trivial hypergraph, or a $t$-$(|I_W|,k,1)$ design.
\end{theorem}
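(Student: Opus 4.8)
The plan is to reduce the statement to a single closure property of the derived set and then read off the design structure. Call a set $S$ of vertices \emph{block-closed} if, for every $t$-subset $T \subseteq S$, the unique block $B_T$ of $H$ containing $T$ satisfies $B_T \subseteq S$. I claim it suffices to show that $I_W$ is block-closed. Indeed, the edges of the sub-hypergraph induced by $I_W$ are exactly the blocks of $H$ lying inside $I_W$, and these all have size $k$. If $I_W$ is block-closed, then each $t$-subset $T$ of $I_W$ is contained in the block $B_T \subseteq I_W$, so it lies in at least one induced edge; uniqueness of this edge is inherited from the design property of $H$ (a $t$-set lies in only one block of $H$, hence in at most one block inside $I_W$). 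Thus every $t$-subset of $I_W$ lies in exactly one induced edge, which is precisely the statement that the induced hypergraph is a $t$-$(|I_W|,k,1)$ design --- provided it has an edge at all.

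First I would dispose of the dichotomy using block-closedness together with the sizes involved. If $|I_W| < t$ there are no $t$-subsets and, since every block has size $k \geq t$, no block can fit inside $I_W$; the induced hypergraph is then trivial. If $|I_W| \geq t$, pick any $t$-subset $T \subseteq I_W$; block-closedness forces $B_T \subseteq I_W$, so $I_W$ contains a block and the induced hypergraph has an edge, landing us in the design case above. (In particular the intermediate range $t \leq |I_W| < k$ cannot occur.)

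The heart of the argument is therefore to prove that $I_W$ is block-closed, and here I would use the fact that $I_W$ is \emph{terminal}: by definition of the derived set, once the infection process reaches $I_W$ no further edge can be infected. Suppose for contradiction that some $t$-subset $T \subseteq I_W$ has $B_T \not\subseteq I_W$, so that $B_T$ still contains an uninfected vertex. Take $A = T$ as the candidate infecting set: it is a nonempty set of infected vertices contained in the edge $E = B_T$. The infection rule fires provided no uninfected vertex $v \notin E$ is adjacent to $A$. But $|A| = t$, and because the design has parameter $1$ the unique block containing the $t$-set $A = T$ is $E = B_T$; hence any edge containing $A \cup \{v\}$ must contain $T$ and so must equal $E$, forcing $v \in E$. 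Thus no vertex outside $E$ is adjacent to $A$ at all, the infection rule applies, and $T$ infects $B_T$ --- contradicting that $I_W$ is terminal. Hence $I_W$ is block-closed.

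The step I expect to be the main obstacle is verifying that a $t$-subset of infected vertices can \emph{always} fire when its block is not yet fully infected; this is exactly where the ``$1$'' in $t$-$(n,k,1)$ is indispensable, since it guarantees there is no vertex outside $B_T$ adjacent to $T$. The observation preceding the theorem --- that a set of fewer than $t$ vertices can fire only when all uninfected vertices already lie in a single edge --- confirms that $t$-sets are the correct ``firing unit'' to focus on, but the contradiction above needs only the $t$-set case and so sidesteps any delicate bookkeeping about smaller sets.
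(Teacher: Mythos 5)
Your proof is correct and follows essentially the same route as the paper's: the key step in both is that any $t$-subset of the derived set can infect the unique block containing it (because $\lambda=1$ leaves no adjacent vertex outside that block), so that block must already lie inside $I_W$, and uniqueness of the containing edge is inherited from $H$. Your contrapositive packaging via ``block-closedness'' and terminality of $I_W$, and your explicit verification of condition (2) of the infection rule, are just a more careful write-up of the same argument.
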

\begin{proof}
  If $|W| <t$, then $W$ cannot infect any edges of $H$. So the
  sub-hypergraph of $H$ induced by $I_W$ is empty.

  If $|W| = t$, then $W$ can infect only the edge that contains it.
  This single edge is, trivially, a $t$-$(k,k,1)$ design.

  Let $H'$ be the sub-hypergraph of $H$ induced by $I_W$.  Any
  $t$-subset $S$ of $I_W$ is contained in a unique edge $E$ of $H$. Since
  all the vertices of $S$ are infected, the set $S$ can infect
  the edge $E$. This means that $E$ is an edge of $H'$. So every
  $t$-subset of vertices from $I_W$ occur in an edge of $H'$. Since
  $H'$ is a sub-hypergraph of $H$, no $t$-subset can occur in more than
  one edge. Thus $H'$ is a $t$-$(|I_W|,k,1)$ design.
\end{proof}

\begin{cor}
Let $H$ be a hypergraph in which the edges form a $t$-$(v,k,1)$ design.
If the design does not contain any non-trivial sub-designs, then $I(H) =t+1$.
\end{cor}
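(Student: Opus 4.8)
The plan is to prove the two inequalities $I(H) \le t+1$ and $I(H) \ge t+1$ separately, with the no-sub-design hypothesis doing its work only for the upper bound.

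For the upper bound I would exhibit an infection set of size $t+1$. Choose any $(t+1)$-set $W$ of vertices that is \emph{not} contained in a single block; such a set exists because a genuine design has more than one block. Every $t$-subset of $W$ lies in a unique block and can infect it, and since $W$ is not contained in one block, two of these $t$-subsets determine two distinct blocks. Hence the derived set $I_W$ contains two distinct blocks, so $|I_W| > k$. By Theorem~\ref{thm:derivedmakessubdesign}, the sub-hypergraph induced by $I_W$ is a $t$-$(|I_W|,k,1)$ design, and since it has more than one block it is a non-trivial sub-design of $H$. As the hypothesis forbids non-trivial proper sub-designs, this design must be all of $H$; that is, $I_W = V(H)$ and $W$ is an infection set. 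This gives $I(H) \le t+1$.

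For the lower bound, recall $I(H) \ge t$ from the remarks preceding the theorem. Suppose for contradiction that some $W$ with $|W| = t$ is an infection set. Because every set of fewer than $t$ vertices lies in at least two blocks, the only way to begin is for $W$ itself (a single $t$-subset) to infect the unique block $E$ containing it, after which exactly the vertices of $E$ are infected. I would then show the process is trapped in $E$: if some $A \subseteq E$ were to infect a further block $F \neq E$, then $A \subseteq E \cap F$ forces $|A| \le t-1$, since two distinct blocks of the design meet in at most $t-1$ points; moreover every vertex $p$ is adjacent to $A$, because $A \cup \{p\}$ has size at most $t$ and so lies in some block. Consequently all uninfected vertices, namely the $v-k$ vertices outside $E$, would have to lie inside the single block $F$ of size $k$, which is impossible once $v \ge 2k$. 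Thus $I_W = E \neq V(H)$, contradicting that $W$ is an infection set, so $I(H) \ge t+1$.

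The main obstacle is the lower bound, and specifically the trapping step: the argument rests on the fact that a set of size at most $t-1$ is adjacent to every vertex, so no single block can shield all the uninfected vertices. This is where the size of the design enters, and I would need to confirm that a genuine $t$-$(v,k,1)$ design satisfies $v \ge 2k$ (equivalently, rule out the degenerate graph case such as $K_3 = $ $2$-$(3,2,1)$, where one checks $I = t$ rather than $t+1$); this holds automatically for the designs of interest, consistent with the restriction $k \ge 3$ used in the symmetric-design proposition above. The upper bound is comparatively routine once Theorem~\ref{thm:derivedmakessubdesign} is invoked, the only care being to start from a $(t+1)$-set that is genuinely spread across two distinct blocks.
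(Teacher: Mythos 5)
Your proof is correct, and the upper bound is exactly the paper's argument: take a $(t+1)$-set $W$ not contained in a single block, observe that its derived set contains two distinct blocks so $|I_W|>k$, and invoke Theorem~\ref{thm:derivedmakessubdesign} together with the no-sub-design hypothesis to conclude $I_W=V(H)$. Where you genuinely add something is the lower bound. The paper's proof of the corollary does not address it at all; it is left implicit in the unproved assertion inside the proof of Theorem~\ref{thm:derivedmakessubdesign} that a $t$-set ``can infect only the edge that contains it,'' together with the earlier remark that $I(H)\geq t$. Your trapping argument --- any $A\subseteq E$ that could infect a second block $F$ satisfies $A\subseteq E\cap F$, hence $|A|\leq t-1$, hence $A$ is adjacent to every vertex, forcing the $v-k$ uninfected vertices into the at most $k-1$ slots of $F\setminus E$ --- is precisely the missing justification, and it buys you something the paper overlooks: it makes visible that the statement requires $v\geq 2k$ (equivalently, excluding degenerate Steiner systems), and indeed $K_3$, viewed as a $2$-$(3,2,1)$ design with no non-trivial sub-designs, has infection number $2=t$ rather than $t+1$, so the corollary as stated is false without such a non-degeneracy hypothesis. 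For genuine designs with $k\geq t+1$ the inequality $v\geq(t+1)(k-t+1)\geq 2k$ does hold (except in small cases like $k=t+1$, $t\leq 1$, where one checks directly), so your reservation is resolvable; but you were right to flag it, and your write-up is more complete than the paper's on this point.
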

\begin{proof}
  Let $W$ be any set of $t+1$ vertices that are not contained in a
  single edge. Then the derived set $I_W$ is larger than $k$. So the
  sub-hypergraph of $H$ induced by $I_W$ is neither a trivial
  hypergraph, nor a single edge. Thus the induced sub-hypergraph is a
  non-trivial $t$-$(|I_W|,k,1)$ sub-design; since the $t$-$(v,k,1)$ design
  has no non-trivial sub-design, it must include all edges of $H$. Thus
  $I_W$ includes all vertices of $H$ and $W$ is an infection set.
\end{proof}

Let $q$ be a prime power. Consider the finite projective space $PG(n,
q)$.  The points of $PG(n, q)$ are the $1$-dimensional subspaces of
the vector space $\bbF_q^{n+1}$, the lines are the $2$-dimensional
subspaces of the vector space $\bbF_q^{n+1}$; incidence between
points and lines is induced by incidence in the vector space
\cite[Section VI.7.5]{MR2246267}.  This space can be used to construct
a $2$-$(\frac{q^{n+1}-1}{q-1}, q+1, 1)$-design. For this we take the
points of $PG(n, q)$ as points of the design, and the lines of $PG(n,
q)$ as blocks.

\begin{proposition}
  Let $H$ be the hypergraph with edge set equal to the blocks of a
  $2$-$(\frac{q^{n+1}-1}{q-1}, q+1, 1)$-design constructed from the
  finite projective space $PG(n,q)$. Provided that $n \geq 2$, the
  infection number of $H$ is $n+1$.
\end{proposition}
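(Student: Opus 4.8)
The plan is to show that the infection process on this hypergraph coincides exactly with taking projective span: a set $W$ of points is an infection set if and only if the $1$-dimensional subspaces indexed by $W$ together span $\bbF_q^{n+1}$. Since the minimum size of such a spanning set is the rank $n+1$, this immediately yields $I(H) = n+1$. Throughout, ``points'' are the vertices and ``lines'' (the $2$-dimensional subspaces, i.e.\ the edges) are the blocks of the $2$-$(\frac{q^{n+1}-1}{q-1}, q+1, 1)$ design.

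First I would record the structural fact that drives everything: because the design has $\lambda = 1$, any two distinct points $x, y$ lie in a \emph{unique} edge, namely the line $\ell_{xy}$ of $PG(n,q)$ through them. I would then check directly from the infection rule that any pair of infected points can infect this line: if $v \notin \ell_{xy}$, then $\{x, y, v\}$ cannot lie in a common edge (two points already determine $\ell_{xy}$), so no uninfected vertex outside $\ell_{xy}$ is adjacent to $\{x,y\}$. Hence at every stage the process may add the entire line through any two already-infected points. (Note also that a single point has large degree and so cannot initiate infection, which is why the first infection is always by a pair.)

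The heart of the argument is the identity $I_W = \langle W\rangle$, where $\langle W\rangle$ is the projective span of $W$ (the smallest subspace of $PG(n,q)$ containing $W$); this is the concrete content of Theorem~\ref{thm:derivedmakessubdesign} here, since the sub-designs of the $PG(n,q)$ design are precisely the designs carried by its projective subspaces. The inclusion $I_W \supseteq \langle W\rangle$ follows from the previous paragraph: $I_W$ is closed under completing the line through any two of its points, hence is a subspace containing $W$. For the reverse inclusion I would argue that, as long as $\langle W\rangle$ is a proper subspace, no infection step can leave it. Consider the first step that would infect a point outside $\langle W\rangle$, using a non-empty infected set $A \subseteq \langle W\rangle$ to infect an edge $E$. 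If $|A| \geq 2$, then $E$ is the unique line through two points of $A$ and so $E \subseteq \langle W\rangle$, a contradiction. If $A = \{x\}$ is a singleton, validity of the step forces every point outside $E$ to be already infected, hence in $\langle W\rangle$; but the complement of a proper subspace of $PG(n,q)$ has at least $q^n$ points, of which at most $q$ can lie on the single line $E$, and $q^n > q$ precisely because $n \geq 2$. Thus an uninfected point outside $E$ survives and blocks the step, giving $I_W \subseteq \langle W\rangle$ and therefore $I_W = \langle W\rangle$.

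Finally I would translate into linear algebra: $W$ is an infection set iff $\langle W\rangle = PG(n,q)$, i.e.\ iff the subspaces indexed by $W$ span $\bbF_q^{n+1}$. Such a spanning set needs at least $n+1$ points, and any $n+1$ points whose representative vectors form a basis do span; hence $I(H) = n+1$. The hypothesis $n \geq 2$ deserves care, as it is exactly what makes the singleton-escape count $q^n > q$ work; indeed the formula fails for $n = 1$, where $PG(1,q)$ is a single edge and $I(H) = 1$ by Proposition~\ref{prop:superedge}. I expect the singleton (and small-set) escape analysis of the third paragraph to be the main obstacle, since the spanning-set computation is routine.
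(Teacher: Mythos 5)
Your proof is correct and follows essentially the same route as the paper: identify the derived set of $W$ with the projective span of $W$, so that infection sets are exactly spanning sets of $PG(n,q)$, which require $n+1$ points. The paper's own proof is only a sketch of this argument, and your write-up usefully supplies the details it omits --- in particular the verification that the process cannot escape a proper subspace, where the singleton case and the count $q^n > q$ are exactly where the hypothesis $n \geq 2$ enters.
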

\begin{proof}
  First note that Theorem~\ref{thm:derivedmakessubdesign} implies that
  the infection number of $H$ must be at least $n+1$. The derived set
  of any set of vertices will be a subspace of the projective space,
  so if the derived set includes all the vertices $t$ must contain a
  spanning set. Finally, any spanning set will form an infection set.
\end{proof}

\section{Hypergraph products}

In this section we will consider the behaviour of the infection number
over various hypergraph products.

We begin with the direct product.  For two hypergraphs $H_1$ and $H_2$
the \textsl{direct product} of $H_1$ and $H_2$ is the hypergraph with
vertex set $V(H_1) \cup V(H_2)$ and edge set
\[
\{ E_1 \cup E_2 \; | \; E_1 \in E(H_1), \, E_2 \in E(H_2)\}.
\]
The direct product of two hypergraphs is denoted by $H_1 \times H_2$.
The direct product can be defined recursively for any number of
hypergraphs. Note that the size of the edges increases, so this is not an extension
of the definition of the direct product of two graphs. 

We have already seen an example of the hypergraph direct product.  The
$k$-partite complete graph $H^{(k)}_{n_1,n_2,\dots, n_k}$ is
isomorphic to the direct product of the $k$ complete graphs
$H_{n_i}^{(1)}$ for $i =1, \dots ,k$. The case where $H_1$ is a
complete graph and $H_2$ has only a single vertex was considered in
Proposition~\ref{prop:addvertextocomplete}. To start, we will
generalize these results.

\begin{theorem}
  Let $k_i$ and $n_i$ be integers for $i = 1,\dots, \ell$ with $k_i
  \leq n_i$. If $n_i \geq 2k_i$  for at least one $i$, then 
\[
 I(H_{n_1}^{(k_1)} \times H_{n_2}^{(k_2)} \times  \dots \times H_{n_\ell}^{(k_\ell)} ) 
    = \sum_{i=1}^\ell (n_i - k_i).
\]
\end{theorem}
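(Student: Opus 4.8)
The plan is to prove matching upper and lower bounds, both resting on one structural observation about when an infected set can fire an edge. Write the vertex set as the disjoint union $V_1 \cup \dots \cup V_\ell$ with $|V_i| = n_i$, so every edge $E$ meets each part in exactly $k_i$ vertices; put $E_i = E \cap V_i$ and, for $A \subseteq E$, set $a_i = |A \cap V_i|$. Because the hypergraph is complete within each coordinate, a vertex $v \in V_j$ outside $E$ satisfies $A \cup \{v\} \subseteq$ (some edge) exactly when $a_j < k_j$. Hence $A$ infects $E$ if and only if, in every part $j$ where $A$ leaves room ($a_j < k_j$), all vertices of $V_j \setminus E_j$ are already infected. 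This firing criterion drives both bounds.

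For the upper bound $I(H) \le \sum_i (n_i - k_i)$, I would exhibit an infection set of this size. Relabel so that part $1$ satisfies $n_1 \ge 2k_1$, and in each part $V_i$ leave exactly $k_i$ chosen vertices $U_i$ uninfected (so the infected set has size $\sum_i(n_i - k_i)$); note $n_1 \ge 2k_1$ leaves at least $k_1$ infected vertices in $V_1$. The infection runs in two phases. In phase one, take $A$ to be $k_1$ infected vertices of $V_1$ and fire $E = A \cup U_2 \cup \dots \cup U_\ell$: part $1$ is full while every other part has room, and $V_i \setminus U_i$ is precisely the infected part of $V_i$, so the criterion is met and $U_2, \dots, U_\ell$ all become infected. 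Now every $V_i$ with $i \ge 2$ is fully infected, so in phase two I fire $E' = U_1 \cup S_2 \cup \dots \cup S_\ell$ with $S_i$ any $k_i$ vertices of $V_i$, using $A' = S_2 \cup \dots \cup S_\ell$: only part $1$ has room and $V_1 \setminus U_1$ is infected, so $U_1$ is infected and everything is done.

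For the lower bound $I(H) \ge \sum_i(n_i - k_i)$, I would argue via an invariant. If fewer than $\sum_i(n_i - k_i) = n - k$ vertices start infected then at least $k+1$ are uninfected, and since $\sum_i k_i = k$ some part $j$ carries at least $k_j + 1$ uninfected vertices. The key claim is that no firing can ever infect a vertex of such an over-full part: were a fired edge $E$ to newly infect some $w \in E_j$, then $A$ would miss $w$, forcing $a_j \le k_j - 1$ (room in part $j$); since $E_j$ covers at most $k_j$ of the $\ge k_j + 1$ uninfected vertices of $V_j$, an uninfected $v \in V_j \setminus E_j$ survives, and by the firing criterion this $v$ blocks the firing. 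Hence part $j$ stays over-full forever and is never fully infected, so no set of size below $n-k$ is an infection set.

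The step I expect to be the main obstacle is this lower-bound invariant: one must see both that a short infected set necessarily produces an over-full part and that the condition $u_j \ge k_j + 1$ is preserved by every legal firing; the rest is bookkeeping through the firing criterion. I would also flag that the statement is meaningful only for a genuine product, $\ell \ge 2$: when $\ell = 1$ phase one is vacuous, and indeed Lemma~\ref{lem:complete} gives the single-factor value $n - k + 1$, one more than the formula.
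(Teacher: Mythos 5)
Your proof is correct and takes essentially the same route as the paper: the same infection set (leave exactly $k_i$ vertices uninfected in each part and use $n_1 \geq 2k_1$ to fire the first edge from $k_1$ infected vertices of part $1$, then finish part $1$ in a second step), and the same lower-bound obstruction (no part may carry more than $k_i$ uninfected vertices). You in fact give more detail than the paper on the lower bound, proving that an over-full part remains over-full under every legal firing rather than only asserting the initial condition, and your observation that the statement requires $\ell \geq 2$ (the $\ell = 1$ case being Lemma~\ref{lem:complete} with value $n-k+1$) is a caveat the paper leaves implicit.
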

\begin{proof}
  Assume that $n_1 \geq 2k_1$, and we will construct an infection set $A$
  of size $\sum_{i=1}^\ell(n_i-k_i)$. For each graph $H_{n_i}^{(k_i)}$
  pick any $n_i -k_i$ vertices to be part of the set $A$; so $k_i$ of
  the vertices are uninfected in each sub-hypergraph. We show that $A$ is
  an infection set.

  Let $E$ be an edge that contains $k_1$ of the infected vertices from
  $H_{n_1}^{(k_1)}$ (since $n_1 \geq 2k_1$ this is possible) and all
  of the $k_i$ uninfected vertices from the other sub-hypergraphs
  $H_{n_i}^{(k_i)}$ with $i=2, \dots ,\ell$. Then the $k_1$ infected
  vertices from $H_{n_1}^{(k_1)}$ never occur with an uninfected
  vertex outside of this edge, so they can infect this edge. Then the only
  uninfected vertices remaining in the hypergraph are the $k_1$
  uninfected vertices from $H_{n_1}^{(k_1)}$. These vertices can be
  infected by any edge that contains them.

  This is optimal since there cannot initially be more
  than $k_i$ uninfected vertices in any sub-hypergraph $H_{n_i}^{(k_i)}$.
Thus 
\[
 I(H_{n_1}^{(k_1)} \times H_{n_2}^{(k_2)} \times  \dots \times H_{n_\ell}^{(k_\ell)} ) 
    \leq \sum_{i=1}^\ell (n_i - k_i).
\]
\end{proof}

Note that for the hypergraph in the previous theorem, $\sum n_i$ is
the number of vertices and $\sum k_i$ is the size of the edges. So
this theorem shows that the infection number of this product
hypergraph is one less than the upper bound given in
Proposition~\ref{prop:upperbound}. If it is not the case that one of the
sub-hypergraph $H_{n_i}^{(k_i)}$ has $n_i \geq 2k_i$, then the
infection number for the hypergraph is the upper bound.

\begin{lemma}
  Let $k_i$ and $n_i$ be integers for $i = 1,\dots, \ell$ with $k_i
  \leq n_i$. If $n_i < 2k_i$  for all $i$, then 
\[
 I(H_{n_1}^{(k_1)} \times H_{n_2}^{(k_2)} \times  \dots \times H_{n_\ell}^{(k_\ell)} ) 
    = 1 + \sum_{i=1}^\ell (n_i - k_i).
\]
\end{lemma}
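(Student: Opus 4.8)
The plan is to match the upper bound coming from Proposition~\ref{prop:upperbound} with an equally-sized lower bound. The product $H = H_{n_1}^{(k_1)} \times \cdots \times H_{n_\ell}^{(k_\ell)}$ is a uniform hypergraph in which every edge has size $K := \sum_i k_i$, and the vertex set has size $N := \sum_i n_i$; since $H$ is non-trivial, Proposition~\ref{prop:upperbound} gives $I(H) \le N - K + 1 = 1 + \sum_i (n_i - k_i)$. Thus the whole content lies in the reverse inequality, and for this it suffices to show that no set $S$ with $|S| = \sum_i(n_i-k_i)$ is an infection set. Equivalently, I would start from a configuration in which exactly $\sum_i k_i$ vertices are uninfected and prove that \emph{no} infection step is possible at all, so that the derived set equals $S$.

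First I would pin down precisely when a nonempty infected set $A' \subseteq S$ can infect an edge $E$. Writing $V_1,\dots,V_\ell$ for the parts, $a_i = |A' \cap V_i|$ (so $a_i \le k_i$ since $A' \subseteq E$) and $u_i$ for the number of uninfected vertices in $V_i$, the key observation is that a vertex $v \in V_i \setminus A'$ is adjacent to $A'$ exactly when $a_i < k_i$: in that case $(A'\cap V_i)\cup\{v\}$ still fits inside a $k_i$-subset, so $A'\cup\{v\}$ extends to an edge, whereas if $a_i = k_i$ the part $V_i$ is ``saturated'' and admits no such extension. Consequently, if $A'$ infects $E$ then for every part with $a_i < k_i$ all $u_i$ uninfected vertices of $V_i$ must already lie in $E$, and since $E\cap V_i$ contains the $a_i$ infected vertices of $A'\cap V_i$ this forces $u_i \le k_i - a_i$; and for every saturated part, $A'$ contains a full $k_i$-subset of infected vertices of $V_i$, so $|S \cap V_i| \ge k_i$ and hence $u_i \le n_i - k_i$.

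The punchline is then a short counting argument, and this is exactly where the hypothesis $n_i < 2k_i$ enters. Since $n_i - k_i < k_i$ for every $i$, both cases above give $u_i \le k_i$ in every part (strictly so in the saturated parts). But in the critical configuration $\sum_i u_i = \sum_i k_i$, so the inequalities $u_i \le k_i$ must all be equalities, $u_i = k_i$. This rules out saturated parts (they would require $u_i \le n_i - k_i < k_i$), and in every remaining (non-saturated) part it forces $a_i \le k_i - u_i = 0$; thus $A' = \emptyset$, contradicting that $A'$ is nonempty. Hence no step can be applied, $S$ is not an infection set, and $I(H) \ge 1 + \sum_i(n_i-k_i)$.

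I expect the one genuinely delicate step to be the step-characterization of the second paragraph: correctly identifying which uninfected vertices are adjacent to $A'$ in the product, and checking that an edge $E$ realizing the infection can actually be assembled (filling the free slots of each part with infected vertices, which is possible because $n_i \ge k_i$). Once that is in place, the pigeonhole forcing $u_i = k_i$ and the use of $n_i < 2k_i$ to exclude saturation are routine.
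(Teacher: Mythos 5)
Your proof is correct and follows essentially the same route as the paper's: the upper bound comes from Proposition~\ref{prop:upperbound}, and the lower bound is obtained by showing that a configuration with $\sum_i k_i$ uninfected vertices admits no first infection, because some uninfected vertex outside the target edge stays adjacent to the infecting set. Your global count over the parts (splitting into saturated and non-saturated parts) is in fact slightly more explicit than the paper's argument, which first reduces to the case of exactly $k_i$ uninfected vertices in each part before exhibiting the obstructing vertex.
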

\begin{proof}
 By Proposition~\ref{prop:upperbound}, there is an infection set of size
\[
n-k+1 = 1 + \sum_{i=1}^\ell (n_i - k_i).
\]
To show that the infection set cannot be any smaller, assume that $A$
is an infection set of size $\sum_{i=1}^\ell (n_i - k_i)$. Initially,
there cannot be more than $k_i$ uninfected vertices in any
sub-hypergraph $H_{n_i}^{(k_i)}$. So we can assume that $A$ contains
exactly $n_i - k_i$ vertices from each sub-hypergraph $H_{n_i}^{(k_i)}$.
Since each $n_i < 2k_i$, no sub-hypergraph has an edge containing only
infected vertices.

Let $A' \subset A$ be the subset of infected vertices that does the
first infection and assume it infects edge $E= E_1 \cup \dots \cup
E_\ell$. For any $i$, the size of $A' \cap E_i$ is no more than $n_i
-k_i <k_i$, so there is an uninfected vertex in $ H_{n_i}^{(k_i)}$. By
construction this vertex is adjacent to $A' \cap E_i$. This is a
contradiction.
\end{proof}

For any hypergraph $H$ the hypergraph $H \times H_1^1$ is the hypergraph
formed by adding a single vertex to every edge of $H$.  In some cases,
this operation can reduce the infection number of the hypergraph.
Consider the hypergraph $H$ on vertex set $\{1,2,3,4\}$ with edge set
$\{\{1,2\}, \{3,4\} \}$; this hypergraph has infection number two. Then $H
\times H_1^1$ is the hypergraph on five vertices with edge set $\{\{1,2,
5\}, \{3,4, 5\} \}$.  The infection number of  $H \times H_1^1$ is
one. Our next result proves that the infection number cannot drop by more
than one using this operation.

\begin{theorem}
For any hypergraph $H$
\[
I(H) -1 \leq I(H \times H_1^1) \leq I(H).
\]
Moreover, $I(H\times H_1^1)=I(H)-1$ if and only if $H$ is disconnected
and has at least one component which is a single edge.
\end{theorem}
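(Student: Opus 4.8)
The plan is to prove the two inequalities and the equality characterization separately, writing $w$ throughout for the single apex vertex that $H \times H_1^1$ adjoins to every edge, so that the edges of $H \times H_1^1$ are exactly the sets $F \cup \{w\}$ with $F \in E(H)$. For the upper bound $I(H \times H_1^1) \le I(H)$ I would show that any infection set $A$ for $H$ is also one for $H \times H_1^1$. The key observation is that a subset $A' \subseteq V(H)$ not containing $w$ infects $F$ in $H$ if and only if it infects $F \cup \{w\}$ in $H \times H_1^1$: since every product edge contains $w$, a potential blocking vertex $v \notin F \cup \{w\}$ satisfies $v \ne w$, so $A' \cup \{v\}$ lies in a product edge $G \cup \{w\}$ exactly when $A' \cup \{v\} \subseteq G$ in $H$. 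Thus the whole infection process of $A$ transfers step by step, and $w$ is infected as soon as the first edge is.

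For the lower bound $I(H \times H_1^1) \ge I(H) - 1$ I would start from a minimum infection set $B$ of $H \times H_1^1$ and simulate its process inside $H$ beginning from $B \cap V(H)$. Every step whose infecting set is not exactly $\{w\}$ transfers back by the same equivalence (if the infector contains $w$, delete it; the reduced condition is identical). The only step that need not transfer is a \emph{free step}, in which $\{w\}$ alone infects an edge $E^* \cup \{w\}$; by the infection rule this requires \emph{every} vertex outside $E^*$ to be already infected, so a free step can occur at most once and must be the final step. Hence $B \cap V(H)$ infects all of $V(H)$ except possibly the single edge $E^*$, and adjoining one vertex of $E^*$ yields an infection set of $H$ of size at most $|B| + 1$.

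The size bookkeeping then drives the characterization. If the process of $B$ has no free step, or has one but $w \in B$, the simulation already produces an $H$-infection set of size $\le |B|$, so no drop occurs; thus a drop forces $w \notin B$ together with a genuine free last step at a fully uninfected edge $E^*$ with everything else infected. For the \emph{if} direction I would give the explicit construction: when $H$ is disconnected with a single-edge component $E_0$, take an optimal infection set of $H$ with that component deleted; by the transfer argument it infects everything outside $E_0$ and brings $w$ in, after which $\{w\}$ performs one free step on $E_0 \cup \{w\}$, producing an infection set of size $I(H) - 1$.

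The \emph{only if} direction is the step I expect to be the main obstacle, and the place where minimality of $B$ and the reduced hypothesis must be used most heavily. One must show that a genuine drop forces $H$ to be disconnected with a single-edge component, and the delicate phenomenon is that the apex's free step may legitimately be applied to an edge $E^*$ that shares vertices with other edges: a degree-one (or otherwise cheap) vertex of $H$ can introduce $w$ into the infection, after which the apex may save a vertex on an edge that is not isolated at all. So it does not suffice to examine only the beneficiary edge $E^*$; the argument must locate the saved vertex globally and show that the net reduction at a minimum solution can be realized only when some component is a single edge. Ruling out the ``connected but stuck at one edge'' configurations that the free-step analysis otherwise permits is the crux, and I expect the precise hypotheses of the statement to be genuinely needed there.
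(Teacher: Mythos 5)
Your upper bound, your free-step analysis giving $I(H)-1 \le I(H\times H_1^1)$, and your explicit construction for the ``if'' direction are all correct, and are in fact more careful than the paper's own treatment. The paper establishes the upper bound and shows that a minimum infection set of $H\times H_1^1$ need not contain the apex vertex, but it then simply asserts that an infection set of $H\times H_1^1$ avoiding the apex restricts to an infection set of $H$ --- which is exactly the free-step phenomenon you isolate and refuse to wave away.

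The genuine gap is the ``only if'' direction of the characterization, which you explicitly leave open. You are right that it is the crux, but it cannot be closed, because that direction of the statement is false, for essentially the reason you suspect. Take $H$ to be the $2$-hypergraph on $\{1,2,3,4\}$ with edges $\{1,2\},\{1,3\},\{2,3\},\{3,4\}$, i.e.\ a triangle with a pendant vertex; it is connected and reduced, and $I(H)=Z(H)=2$, since $H$ is not a path while $\{1,4\}$ is a zero forcing set. Writing $w$ for the apex, the edges of $H\times H_1^1$ are $\{1,2,w\},\{1,3,w\},\{2,3,w\},\{3,4,w\}$. The set $\{4\}$ infects $\{3,4,w\}$ because $4$ has degree one; after that the only uninfected vertices are $1$ and $2$, both of which lie inside the edge $\{1,2,w\}$, so $\{w\}$ infects that edge and the whole hypergraph is infected. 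Hence $I(H\times H_1^1)=1=I(H)-1$ although $H$ is connected and has no single-edge component --- precisely one of the ``connected but stuck at one edge'' configurations you flag. So your proposal proves everything that is actually true here (both inequalities and the ``if'' direction); the step you could not supply corresponds to the point where the paper's own argument makes an unjustified transfer, and no argument can repair it.
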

\begin{proof}
  By definition, an infection set of $H$ is also an infection set
  of $(H\times H_1^1)$. As a result, it follows that $I(H\times
  H_1^1)\leq I(H)$.

  First we will show that if either $H$ is connected, or $H$ is
  disconnected with no component being a single edge, then
\[
I(H)\leq I(H\times H_1^1).
\]
Let $v$ be the vertex that is added to every edge in $H$ to form $H
\times H_1^1$.

If $H$ is a single edge $I(H\times H_1^1)$ and $I(H)$ both equal
$1$.  We assume that $H$ is not a single edge. Further, let $A$
be an infection set of $H\times H_1^1$ with cardinality $I(H\times
H_1^1)$. Since $H$ contains at least $2$ edges, $A \neq \{v\}$. In
fact, we will show that $A$ does not contain the vertex
$v$.

To the contrary, suppose that $v\in A$.  Let $A' \subset A$ be the set that
causes the first infection in the process; assume that it infects
the edge $E$ of $H\times H_1^1$.  By the infection rule, there is no
uninfected vertex $u$ outside of $E$ such that $\{u\}\cup A'$ is a
subset of another edge. In fact, noting that every edge of $H\times
H_1^1$ contains vertex $v$, we see that $\{u\}\cup (A'\setminus \{v\})$
is not contained in any edge other than $E$.  Thus, $A'\setminus \{v\}$
can infect $E$, and hence the vertex $v$.  This implies that $A
\setminus \{v\}$ is also an infection set of $H\times H_1^1$, which
contradicts the choice of $A$ being a minimum infection set.  Thus no
minimum infection set of $H\times H_1^1$ contains $v$.

Let $H_{i}$, for $i = 1,\dots,c$, be the components of $H$ and assume
that no component is a single edge (note that $c = 1$ if and only if
$H$ is connected). Consider the sub-hypergraph $H_i \times H_1^1$ in
$H \times H_1^1$. Let $B_i$ be an infection set for $H_i \times H_1^1$
with minimal size. Since no $H_i$ is a single edge, no $B_i$ contains
the vertex $v$. The infection set of $H\times H_1^1$ with the minimal
cardinality must contain a subset $B_i$ for each $i =1,\dots,c$.  Thus
$\bigcup_{i=1}^{c} B_{i}$ is an infection set of $H$, and
$I(H)\leq I(H\times H_1^1)$.

Next suppose that $H_i$ for $i = 1,\dots,\ell$ are the components of
$H$ that are a single edge.  For each $H_{i}$, with $i>\ell$, let
$B_{i}$ be an infection set of $H_i\times H_1^1$ with minimum
cardinality; from above this infection set does not contain $v$. 

The infection number of each $H_i$ with $i =1,\dots,\ell$ is $1$,
since each is a single edge. Further, $v$ can only infect one edge in
$(H\times H_1^1)$, since it is contained in every edge of the
hypergraph.  This implies that an infection set for $H \times H_1^1$
must contain at least $\ell-1$ vertices, other than $v$, to infect the
edges $H_i \times H_1^1$ for $i =1,\ldots, \ell$.  For $i=1, \dots,
\ell$, we select a single vertex $v_i$ from $H_i$.  Then
$(\bigcup_{i=1}^c B_{i}) \cup (\bigcup_{i=1}^{\ell-1} \{v_{i}\}) $ is
an infection set of $H\times H_1^1$ with minimal size.  But,
$(\bigcup_{i=1}^{n} B_{i}) \cup (\bigcup_{i=1}^{\ell-1} \{ v_{i} \})$
is an infection set of $H$. This implies that $I(H)\leq I(H\times
H_1^1)+1$.

Conversely, note that each infection set of $H$ with cardinality
$I(H)$ contains exactly one vertex from each single edge.  Thus $I(H)
= | (\bigcup_{i=1}^c B_{i})\cup (\bigcup_{i=1}^{\ell}
\{v_{i}\}) |$, (where $B_{i}$ is the infection set of $H_{i}$ for $i
>\ell$, and $v_i$ is any vertex in $H_i$ with $i = 1, \dots,
\ell$). Exactly one of the vertices $v_i$ may be removed from this
infection set for $H$ to form an infection set for $H\times H_1^1$
(since the $v$ can infect the edge $H_i$).
\end{proof}

There is a simple bound on the infection number for the direct product
of any two hypergraphs.

\begin{lemma}
Let $H_1$ and $H_2$ be two hypergraphs, then 
\[
I(H_1 \times H_2) \leq I(H_1) + I(H_2).
\]
\end{lemma}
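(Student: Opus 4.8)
The plan is to show that if $A_1$ is a minimum infection set for $H_1$ and $A_2$ a minimum infection set for $H_2$, then $A_1\cup A_2$ is an infection set for $H_1\times H_2$. Since the vertex sets $V(H_1)$ and $V(H_2)$ are disjoint in the product, $|A_1\cup A_2| = I(H_1)+I(H_2)$, which gives the stated bound. Throughout I assume both $H_1$ and $H_2$ are non-trivial (each has at least one edge); as noted below, the inequality can fail when exactly one factor has no edges, so this hypothesis is genuinely needed. Note also that any infected set $S$ arising in the product splits as $S = (S\cap V(H_1))\cup (S\cap V(H_2))$, so I will track the $H_1$-part and the $H_2$-part of the infection separately.

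The key observation is that the infection rule in the product \emph{decouples} across the two factors, because every edge of $H_1\times H_2$ has the form $E_1\cup E_2$ with $E_1\in E(H_1)$ and $E_2\in E(H_2)$, and each vertex lies in exactly one of $V(H_1)$, $V(H_2)$. Concretely, I would prove the following ``combined move'' lemma: suppose the product's infected set is $S = S^{(1)}\cup S^{(2)}$ with $S^{(j)}\subseteq V(H_j)$; if $A^{(1)}\subseteq S^{(1)}$ validly infects an edge $E_1$ of $H_1$ (with respect to $S^{(1)}$) and $A^{(2)}\subseteq S^{(2)}$ validly infects an edge $E_2$ of $H_2$ (with respect to $S^{(2)}$), then $A^{(1)}\cup A^{(2)}$ validly infects the edge $E_1\cup E_2$ of the product (with respect to $S$). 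To verify this I would suppose for contradiction that some uninfected vertex $v\notin E_1\cup E_2$ is adjacent to $A^{(1)}\cup A^{(2)}$ in the product, say $A^{(1)}\cup A^{(2)}\cup\{v\}\subseteq G_1\cup G_2$ for a product edge. If $v\in V(H_1)$ then projecting to the first factor gives $A^{(1)}\cup\{v\}\subseteq G_1$ with $v$ an uninfected vertex of $V(H_1)\setminus E_1$, contradicting that $A^{(1)}$ infects $E_1$ in $H_1$; the case $v\in V(H_2)$ is symmetric.

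With this lemma in hand I would run the infection processes of $H_1$ and $H_2$ in parallel. Let the process for $H_1$ infect edges $E^{(1)}_1,\dots,E^{(1)}_p$ in order, and the process for $H_2$ infect $E^{(2)}_1,\dots,E^{(2)}_q$; assume $p\ge q$. For $t=1,\dots,q$ I pair the $t$-th moves and apply the combined-move lemma to infect $E^{(1)}_t\cup E^{(2)}_t$ in the product. An easy induction shows that after $t$ such steps the product's infected set is exactly $S^{(1)}_t\cup S^{(2)}_t$, the union of the two single-hypergraph infected sets, so each paired move is legal. When $p>q$, the factor $H_2$ is already fully infected after step $q$, so for $t=q+1,\dots,p$ I pad with a dummy $H_2$-move: pick any edge $E_2$ of $H_2$ and take $A^{(2)}=E_2$, which infects $E_2$ trivially since no uninfected vertex of $V(H_2)$ remains, and pair it with the genuine $t$-th move of $H_1$. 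After $p$ steps the infected set is $V(H_1)\cup V(H_2)$, so $A_1\cup A_2$ is an infection set.

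The routine parts are the projection argument in the combined-move lemma and the inductive bookkeeping of the infected sets. The point requiring the most care is the length-mismatch padding, and in particular the fact that it relies on $H_2$ having at least one edge to serve as the dummy $E_2$; this is exactly where the non-triviality hypothesis is used, and indeed when one factor is trivial while the other is not the product becomes trivial and the bound fails. Once the decoupling of the infection rule across the two factors is established, everything else is bookkeeping.
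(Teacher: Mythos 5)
Your proposal is correct and follows essentially the same route as the paper: take the union of infection sets for the two factors, pair up the infection steps so that $A_1\cup B_1$ infects $E_1\cup E_2$, and derive a contradiction by projecting any offending uninfected vertex $v$ to whichever factor it belongs to. You additionally handle the step-count mismatch between the two processes and flag the need for both factors to be non-trivial, details the paper's terser proof passes over silently.
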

\begin{proof}
  If $A$ is an infection set for $H_1$, and $B$ is an infection set for
  $H_2$, then $A \cup B$ is an infection set for $H_1 \times H_2$.

  To see this assume that $A_1 \subset A$ infects the edge $E$, and
  $B_1 \subset B$ infects the edge $F$. Then $A_1 \cup B_1$ is a
  subset of infected vertices in $E \cup F$. We claim that $A_1 \cup
  B_1$ can infect $E \cup F$.  Assume that there is an uninfected
  vertex $v$, outside of $E \cup F$, that is adjacent to $A_1 \cup
  B_1$. If $v$ is a vertex from $H_1$, then $A_1 \cup \{v\}$ is
  contained an edge other than $E$. This contradicts the fact that
  $A_1$ infects $E$. Similarly, $v$ cannot be a vertex of $H_2$, thus
  no such vertex $v$ can exist.
\end{proof}

The next example shows that this bound can hold with equality.

\begin{proposition}
  If $H_1$ and $H_2$ are both hypergraphs with more than one edge and
  $I(H_1) =1$ and $I(H_2) =1$, then $I(H_1 \times H_2) = 2$.
\end{proposition}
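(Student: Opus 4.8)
The plan is to combine the upper bound from the preceding lemma with a matching lower bound. The upper bound is immediate: since $I(H_1)=I(H_2)=1$, the previous lemma gives $I(H_1\times H_2)\leq I(H_1)+I(H_2)=2$. So the entire content of the proof is establishing that $I(H_1\times H_2)\geq 2$, i.e. that no single vertex can be an infection set.

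To prove the lower bound, I would argue by contradiction, assuming some single vertex $w$ is an infection set for $H_1\times H_2$. Every vertex of the product lies in either $V(H_1)$ or $V(H_2)$; say without loss of generality $w\in V(H_1)$. For $w$ to cause the first (indeed, any) infection, the singleton $\{w\}$ must be contained in exactly one edge of $H_1\times H_2$, since otherwise $\{w\}\cup\{v\}$ sits inside a second edge for some uninfected $v$ and the infection rule is blocked. The key structural fact I would exploit is that edges of the product have the form $E_1\cup E_2$ with $E_1\in E(H_1)$ and $E_2\in E(H_2)$, so the number of product edges containing $w$ equals $\deg_{H_1}(w)\cdot|E(H_2)|$. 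Since $H_2$ has more than one edge, $|E(H_2)|\geq 2$, and since $H_1$ is non-trivial (it has more than one edge, so $w$ lies in at least one edge, using that we work with reduced hypergraphs and $I(H_1)=1$ forces $\delta(H_1)=1$ via Lemma~\ref{lem:degreebound}, hence every vertex has degree at least one), we get that $w$ lies in at least $2$ edges of the product. This contradicts $\{w\}$ being able to start the infection.

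The step I expect to require the most care is verifying that $w$ genuinely has degree at least one in its factor, so that the product of degrees is at least $2$ rather than $0$. This needs the hypotheses carefully: $H_1$ has more than one edge and is reduced, and $I(H_1)=1$ lets me invoke Lemma~\ref{lem:degreebound} to conclude $\delta(H_1)=1$, which in particular means no isolated vertices, so $\deg_{H_1}(w)\geq 1$. I would state this cleanly and then note the counting identity $\deg_{H_1\times H_2}(w)=\deg_{H_1}(w)\cdot|E(H_2)|\geq 1\cdot 2=2$. An equivalent and perhaps cleaner phrasing avoids degrees entirely: if $F$ and $F'$ are two distinct edges of $H_2$ and $E$ is any edge of $H_1$ containing $w$, then $E\cup F$ and $E\cup F'$ are two distinct product edges both containing $w$, so $\{w\}$ can never satisfy the second condition of the infection rule. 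I would likely present this edge-pairing version as the main argument, since it sidesteps the degree bookkeeping and makes the role of the hypothesis "$H_2$ has more than one edge" completely transparent.

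Finally, I would close by noting that the same argument applies symmetrically if $w\in V(H_2)$, using that $H_1$ has more than one edge, so in all cases a single vertex fails to be an infection set. Combined with the upper bound, this yields $I(H_1\times H_2)=2$ as claimed.
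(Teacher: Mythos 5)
Your proposal is correct and follows essentially the same route as the paper: the upper bound comes from the preceding subadditivity lemma, and the lower bound comes from observing that every vertex of $H_1 \times H_2$ lies in at least two edges (since the other factor has at least two edges), so no single vertex can start an infection. The only cosmetic difference is that the paper cites Lemma~\ref{lem:degreebound} in contrapositive form to pass from ``no vertex has degree one'' to $I(H_1\times H_2)>1$, whereas you re-derive that implication inline via the edge-pairing argument.
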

\begin{proof}
  Since both $H_1$ and $H_2$ have more than one edge, no vertex of
  $H_1 \times H_2$ has degree one, so by Lemma~\ref{lem:degreebound},
  $I(H_1 \times H_2) >1$. The previous lemma result then gives the
  result.
\end{proof}

Next we define two additional constructions of hypergraphs that are
based on the corona of a graph. The \textsl{join} of a graph $G$ with
a vertex $v$, is the graph with vertex set $V(G) \cup \{v\}$, and the
edge set is the set of all the edges from $G$ along with all edges of
the form $\{v,h\}$ where $h \in V(G)$. So it is the graph formed by
adding $v$ to $G$ and making every vertex in $G$ adjacent to $v$. If
$G$ and $H$ are graphs, the \textsl{$H$-corona} of $G$ is the graph
formed by taking $G$ and for each vertex in $G$ joining a copy of $H$
to the vertex. The $H$-corona of $G$ is denoted by $G \circ
H$. In~\cite{MR2388646} an upper bound on the zero forcing number for
the corona of a graph is given; this bound follows from a simple
construction of a zero forcing set for $G \circ H$ using zero forcing
sets of $G$ and $H$.

\begin{theorem}\label{Zcorona} ~\cite[Prop. 2.12]{MR2388646}
Let $G$ and $H$ be graphs, then
\[
Z( G \circ H) \leq Z(G) \, |V(H)| + (|V(G)| - Z(G))\, Z(H). 
\]
\end{theorem}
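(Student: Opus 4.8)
The plan is to exhibit an explicit zero forcing set $S$ of $G \circ H$ whose cardinality is exactly the claimed bound, and then to verify that $S$ forces the whole graph. Write $H_u$ for the copy of $H$ attached to the vertex $u \in V(G)$ in the corona, and let $S_G$ be a zero forcing set of $G$ with $|S_G| = Z(G)$. I would build $S$ as follows: for every $u \in S_G$ place all of $V(H_u)$ into $S$, and for every $u \in V(G) \setminus S_G$ place a minimum zero forcing set $S_{H_u}$ of $H_u$ (so $|S_{H_u}| = Z(H)$) into $S$. This immediately gives $|S| = Z(G)\,|V(H)| + (|V(G)| - Z(G))\,Z(H)$, matching the right-hand side, so the entire content of the theorem reduces to proving that $S$ is a zero forcing set. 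Note that no vertex of $G$ itself lies in $S$, so every vertex of $G$ starts white.

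First I would observe that for each $u \in S_G$ the copy $H_u$ is entirely black, and any vertex $w \in H_u$ has all its $H_u$-neighbours black; since the only remaining neighbour of $w$ in $G\circ H$ is the attachment vertex $u$, vertex $w$ can force $u$ black. Thus after one round every vertex of $S_G$, viewed as a vertex of $G$, is black, which exactly reproduces the initial state of the zero forcing process of $G$.

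The core of the argument is an induction along the forcing order of $G$ determined by $S_G$. The key observations I would isolate are twofold: once a vertex $u$ is black and its copy $H_u$ is entirely black, then $u$ has no white neighbour inside $H_u$, so the adjacencies of $u$ toward the rest of $G$ behave exactly as in $G$; and, conversely, once $u$ itself is black, the set $S_{H_u}$ forces all of $H_u$, because the extra neighbour $u$ of each vertex of $H_u$ is now black and forcing inside $H_u$ therefore proceeds precisely as it does in the isolated graph $H$. Combining these, I would process the vertices of $V(G) \setminus S_G$ in the order in which they are forced in $G$: when the forcer $p$ of a vertex $u$ acts, $H_p$ is already entirely black (either because $p \in S_G$ or by the induction hypothesis), so $p$'s unique white neighbour is $u$ and the force goes through; immediately afterwards $S_{H_u}$ blackens $H_u$, restoring the hypothesis for the next step. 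Since $S_G$ is a zero forcing set of $G$, every vertex of $G$ is eventually blackened, and with it every copy $H_u$, so $S$ forces all of $G \circ H$.

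The step I expect to be the main obstacle is exactly this synchronisation between the two forcing processes: a vertex $p$ of $G$ can force along a $G$-edge only if it has no white neighbour in its own pendant copy $H_p$, so I must guarantee that $H_p$ is fully infected before $p$ is called upon to force. This is what compels the inductive ordering along the forcing tree of $G$ together with the ``blacken $H_u$ as soon as $u$ is black'' scheduling, and it is the only place where the corona structure (each $H_u$ being joined to the single vertex $u$) is genuinely used. I would also record the harmless assumption that $H$ is nonempty, since otherwise both the corona and the stated bound degenerate.
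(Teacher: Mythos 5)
Your proof is correct, and it is essentially the argument the paper has in mind: the paper states this result as a citation without proof, but explicitly describes it as following from "a simple construction of a zero forcing set for $G \circ H$ using zero forcing sets of $G$ and $H$," and your construction (fill $V(H_u)$ for $u$ in a minimum zero forcing set of $G$, use a minimum zero forcing set of $H_u$ otherwise, then synchronise the two forcing processes) is exactly the scheme the paper itself adapts in the proof of its hypergraph analogue, Theorem~\ref{strongcorona}. Your attention to the scheduling issue---that $H_p$ must be fully black before $p$ forces along a $G$-edge---is the right point to emphasise and is handled correctly.
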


This first generalization of a corona to hypergraphs we will call the
weak corona. Let $H$ be a $(k-1)$-hypergraph. Define the join of $H$
with a vertex $\{v\}$ to be the hypergraph formed by adding $v$ to
every edge of $H$.  Define the \textsl{weak $H$-corona} of a
$k$-hypergraph $G$ to be the $k$-hypergraph formed by taking $G$ and joining a copy
of $H$ to each vertex of $G$. This new hypergraph is denoted by $G
\circ_w H$.  If $H$ is a hypergraph with $\ell$ disjoint edges, then
the $H$-corona of a hypergraph $G$ is an example of an $\ell$-corona
of $G$ as defined in~\cite{tuza}. Figure~\ref{exampleofweakcorona} is
the weak corona of a cycle with a single edge on two vertices.

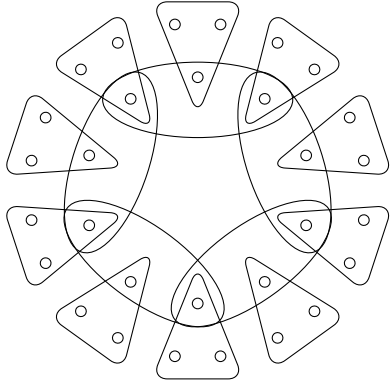
\begin{figure}[h!]
\begin{center}
\begin{tikzpicture}
\foreach \x in {0,1,...,9} \draw [rotate=36*\x]   \mytriangle;
\foreach \x in {0,1,...,4} \draw [rotate=72*\x]   \myellipse;
\end{tikzpicture}
\end{center}
\caption{The weak corona of a cycle with an edge} \label{exampleofweakcorona}
\end{figure}

Using a construction of an infection set based on the construction
used in the proof of Theorem~\ref{Zcorona}, we can establish a similar
bound.

\begin{theorem}\label{Zcorona-2}
Let $G$ and $H$ be hypergraphs, then
\[
I( G \circ_w H) \leq |V(G)| \, I(H). 
\]
\end{theorem}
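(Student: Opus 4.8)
The plan is to build an infection set for $G \circ_w H$ by placing a separate minimum infection set inside each copy of $H$ and showing that this simultaneously infects every root vertex of $G$ ``for free.'' Concretely, for each $v \in V(G)$ let $H_v$ denote the copy of $H$ attached at $v$, and recall that the edges of $G \circ_w H$ contributed by this copy are exactly the sets $e \cup \{v\}$ with $e \in E(H_v)$. For each $v$ fix a minimum infection set $I_v \subseteq V(H_v)$ of $H_v$, so that $|I_v| = I(H)$, and set $A = \bigcup_{v \in V(G)} I_v$. Then $|A| = |V(G)|\, I(H)$, and it suffices to prove that $A$ is an infection set.

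The heart of the argument is a one-petal claim: starting from $I_v$, the infection process using only the edges $e \cup \{v\}$ reproduces, step for step, the infection process of the stand-alone hypergraph $H_v$ from $I_v$, and in addition infects $v$. To see that a force transfers, suppose that at some stage a set $A' \subseteq V(H_v)$ infects an edge $e$ of $H_v$; I would show that the same $A'$ legally infects $e \cup \{v\}$ in $G \circ_w H$. The only edges of $G \circ_w H$ containing a non-empty $A' \subseteq V(H_v)$ are the sets $e' \cup \{v\}$ with $e' \in E(H_v)$ and $A' \subseteq e'$, since distinct petals have disjoint vertex sets and the edges of $G$ lie inside $V(G)$. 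Hence any uninfected vertex $w$ outside $e \cup \{v\}$ that is adjacent to $A'$ must lie in $V(H_v) \setminus e$ and satisfy $A' \cup \{w\} \subseteq e'$ for some $e' \in E(H_v)$ — but that is precisely the obstruction ruled out by the validity of the force $A' \to e$ inside $H_v$. Because the first such force infects the edge $e \cup \{v\}$, the root $v$ becomes infected; and since every vertex of $V(H_v)$ meets the rest of the hypergraph only through petal-$v$ edges, the infection status of $V(H_v)$ in $G \circ_w H$ stays in lock-step with the stand-alone process (additional infected vertices elsewhere can only shrink the set of potential blockers, never create one).

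Carrying out the one-petal claim for every $v \in V(G)$ infects all of $V(H_v)$ together with $v$, for each $v$, and therefore infects $\bigcup_{v} V(H_v) \cup V(G) = V(G \circ_w H)$. This shows $A$ is an infection set and yields $I(G \circ_w H) \le |V(G)|\, I(H)$. Note that no edge of $G$ is ever used: unlike the graph corona bound of Theorem~\ref{Zcorona}, the roots are swept up automatically by their petals, which is exactly why the resulting bound carries no separate $Z(G)$-type term.

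I expect the main obstacle to be precisely the legality check in the second paragraph — verifying that no other root $u \neq v$ and no vertex of a different petal can ever serve as an uninfected neighbour that blocks a petal-$v$ force. This reduces to the disjointness of the petal vertex sets together with the observation that any edge meeting $V(H_v)$ in $A'$ is forced to use $v$ as its unique $G$-vertex; once this bookkeeping is in place, the induction on the number of completed forces is routine. One caveat worth recording is that $H$ should be non-trivial (have at least one edge), so that each petal performs at least one force and thereby infects its root; were $H$ edgeless the roots would be isolated from their petals and the bound would require an additional $I(G)$ term.
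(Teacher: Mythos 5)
Your proposal is correct and follows essentially the same route as the paper's proof: place a copy of a minimum infection set of $H$ in each petal $H_v$ and observe that the petal's own infection process transfers verbatim to the edges $e \cup \{v\}$, sweeping up the root $v$ for free. You are somewhat more careful than the paper in verifying the legality of each transferred force (that no vertex outside the petal can block it), and your closing caveat about edgeless $H$ is a fair observation, but the core argument is the same.
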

\begin{proof}
  In $G \circ_w H$, there is a copy of $H$ for each vertex $g$ in $G$;
  call this sub-hypergraph $H_g$.  The vertices of $G \circ_w H$ are
  either $g\in V(G)$, or $h_g \in V(H_g)$.

  Let $A$ be an infection set for $H$ and let $A_g$ be the copy of $A$ in
  the hypergraph $H_g$. Define $B = \bigcup\limits_{g \in G} A_g$. We claim that
  $B$ is an infection set for $G \circ_w H$.

  If $A' \subset A$ infects an edge $E$ in $H$, then $A'_g \subset
  A_g$ (the set of vertices corresponding to $A'$ in $H_g$) can infect
  the edge $E_g$ in $H_g$ corresponding to $E$. The edge $E_g$
  contains $g$, so this vertex will also be infected. Thus following
  the infection process for $H$, the set $A_g$ infects all the
  vertices in $H_g$ (including $g$) for all the $g \in
  V(G)$. Therefore the set $A$ is an infection set for $G \circ_w H$.
\end{proof}

We will consider another generalization of the corona to hypergraphs.
Let $H$ be a $k$-hypergraph. Define the \textsl{strong join} of $H$
with a vertex $v$ to be the hypergraph with vertex set $V(H) \cup
\{v\}$, and edge set containing all edges of $H$, together with all edges
of the form $F \cup \{v\}$ where $F$ is a $(k-1)$-subset of an edge in
$H$. Denote this graph by $H \triangledown v$.

Let $H$ be a $k$-hypergraph. The \textsl{strong $H$-corona} of a
$k$-hypergraph $G$ is defined to be the $k$-hypergraph formed by
taking first a copy of $G$, and then for each vertex $g \in V(G)$ taking
a disjoint copy of $H$ and $H \triangledown g$.  This hypergraph is denoted by
$G \circ_s H$. The number of vertices in $G \circ_s H$ is $|V(G)| +
|V(G)|\,|V(H)|$ and the number of edges is
\[
|E(G)| + |V(G)| \, |E(H)| (k+1).
\]

\begin{theorem}\label{strongcorona}
Let $G$ and $H$ be hypergraphs, then
\[
I( G \circ_w H) \leq I(G) \, |V(H)| + (|V(G)| - I(G) )\, I(H).
\]
\end{theorem}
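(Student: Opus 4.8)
The plan is to prove the bound constructively, by exhibiting an infection set of $G \circ_w H$ of size exactly $I(G)\,|V(H)| + (|V(G)| - I(G))\,I(H)$, directly paralleling the construction used for graphs in Theorem~\ref{Zcorona}. Throughout I assume $H$ is non-trivial (if $H$ has no edges the weak corona attaches isolated vertices and the statement degenerates). For each $g \in V(G)$ let $H_g$ be the copy of $H$ joined to $g$, so that every edge meeting $V(H_g)$ has the form $\{g\} \cup e$ with $e$ an edge of $H_g$; the crucial structural feature is that the only vertex outside $V(H_g)$ adjacent to any vertex of $V(H_g)$ is the root $g$. Choose any set $S$ of $I(G)$ vertices of $G$ (in the graph analogue one must take a zero forcing set, but for the weak corona every root is infected through its own copy, so the particular choice of $S$ is immaterial), and for each $g \notin S$ choose a copy $A_g \subseteq V(H_g)$ of a minimum infection set of $H$, so $|A_g| = I(H)$. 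Take as the initial infection set
\[
B = \Big( \bigcup_{g \in S} V(H_g) \Big) \cup \Big( \bigcup_{g \notin S} A_g \Big),
\]
whose cardinality is $I(G)\,|V(H)| + (|V(G)| - I(G))\,I(H)$.

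The main work is to show that $B$ infects all of $G \circ_w H$, which I would do by first infecting every root $g \in V(G)$ together with its copy $V(H_g)$. For $g \notin S$, the argument of Theorem~\ref{Zcorona-2} applies directly: since $V(H_g)$ is attached to the rest of the hypergraph only at $g$, for any $A' \subseteq V(H_g)$ the infection condition in $G \circ_w H$ restricted to $A'$ coincides with the infection condition in $H$, so $A_g$ drives the infection inside $H_g$ exactly as a minimum infection set drives it in $H$; because each infected edge $\{g\} \cup e$ contains $g$, both all of $V(H_g)$ and the root $g$ become infected. For $g \in S$, all of $V(H_g)$ is infected at the start, so for any edge $e$ of $H_g$ the set $e$ has no uninfected vertex of $V(H_g)$ outside $\{g\} \cup e$ adjacent to it, and therefore $e$ infects $\{g\} \cup e$ and in particular infects $g$. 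After this, every root and every copy vertex is infected, so $B$ is an infection set and the bound follows.

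The one genuinely delicate point — and the step I expect to be the main obstacle — is justifying that the infection process run inside a single copy $H_g$ is not disrupted by the surrounding hypergraph, i.e.\ that no infected subset of $V(H_g)$ is ever blocked by an uninfected vertex living outside $V(H_g) \cup \{g\}$. This is exactly where the pendant structure of the weak corona is used, and it is already isolated as the content of Theorem~\ref{Zcorona-2}, so I would cite that result rather than reprove it. Finally, I would remark that for the weak corona the inequality in fact follows a fortiori from Theorem~\ref{Zcorona-2} alone, since
\[
I(G \circ_w H) \le |V(G)|\,I(H) \le I(G)\,|V(H)| + (|V(G)| - I(G))\,I(H),
\]
the last inequality being $0 \le I(G)(|V(H)| - I(H))$; the explicit construction is nonetheless worth recording because it is the faithful hypergraph analogue of the graph argument in Theorem~\ref{Zcorona}.
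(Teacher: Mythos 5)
Your argument is correct for the statement as literally printed, and your closing observation is the sharpest point: for the weak corona the bound is an immediate consequence of Theorem~\ref{Zcorona-2}, since $|V(G)|\,I(H) \le I(G)\,|V(H)| + (|V(G)|-I(G))\,I(H)$ reduces to $I(H)\le |V(H)|$. But this is not the theorem the paper proves. The label, the placement immediately after the definition of the strong corona, and the paper's own proof (which opens by constructing an infection set for $G \circ_s H$) make clear that the intended subject is the strong corona; the occurrences of $\circ_w$ in the statement are typos. For $G \circ_s H$ your construction breaks at exactly the step you dismiss as immaterial, namely the choice of $S$. In the strong corona the root $g$ does not lie in the edges of its copy $H_g$ but is adjacent to every $(k-1)$-subset of every such edge via the edges $F \cup \{g\}$ of $H \triangledown g$; hence while $g$ is uninfected, no proper subset of an edge of $H_g$ can infect that edge, and a copy seeded with only $I(H)$ vertices is stuck until its root is infected from within $G$. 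The paper therefore must take $S=B$ to be an infection set of $G$, fully infect $V(H_g)$ for $g\in B$ so that those copies infect their roots, run the infection process of $G$ to reach the remaining roots, and only then let the sets $A_{g}$ infect the remaining copies; the mechanism by which a fully infected copy reaches its root is also different (an auxiliary edge $F\cup\{g\}$ rather than membership of $g$ in every edge of the copy). In short: your proof is sound, but it establishes the (essentially trivial) weak-corona reading; the content of the theorem, and the reason the right-hand side has the corona form of Theorem~\ref{Zcorona}, lies in the strong corona, where the two-phase argument and the requirement that $S$ be an infection set of $G$ are essential.
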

\begin{proof}
  For each vertex $g \in V(G)$, let $H_g$ denote the copy of $H$
  corresponding to $g$.

  Construct an infection set $A$ for $G \circ_s H$ as follows. 
  Let $B$ be a minimal infection set for $G$.  For every $g \in B$,
  set all the vertices in $H_g$, except $g$, to be in $A$ (so
  initially infected). For every vertex $g$ of $G$ not in $B$, add a
  minimal infection set from $H_g$ to $A$. 

  For every $g \in B$, the vertices of $H_g$ are all infected, so they
  can infect $g$. Then the vertices $g \in B$ from the copy of $G$ in
  $G\circ_w H$ can infect some vertices in $G$. Assume that $g_1$ is one of
  these vertices infected in the first step of the infection process
  for $B$. Then $g_1$ is initially uninfected, and $A$ includes an
  infection set from $H_{g_1}$. Once $g_1$ is infected, the infection
  set from $H_{g_1}$ can infect the vertices in $H_{g_1}$. Now $g_1$
  can infect other vertices in the copy of $G$. Continuing
  like this, all of the vertices in $G \circ_w H$ will be infected.
 \end{proof}

 The next product that we consider is the \textsl{Cartesian
   product}. If $H$ and $G$ are hypergraphs, $G \Box H$ is the
 Cartesian product.  The vertex set of this hypergraph is $V(G) \times
 V(H)$. For a set $E=\{e_1,e_2,\dots,e_k\}$ and a vertex $v$, define 
\[
E \times v = \{ (e_1,v), (e_2,v), \dots, (e_k,v)\},
\quad
v \times E = \{ (v, e_1), (v, e_2), \dots, (v, e_k)\}
\]
The edge set of the Cartesian product is
\[
\{ E \times v : E \in E(G), \, v \in V(H)\} \cup \{ u \times E: u \in V(G), \, E \in E(H)\}.
\]

There is a well-known simple upper bound for the zero forcing number of the
Cartesian product of two graphs (see \cite{MR2388646});
\begin{align}
Z(G \Box H) \leq Z(G) \, |V(H)|. \label{eq:bnd_cartesian_prod_graphs}
\end{align}

We generalize this result for the infection number of the Cartesian
product to hypergraphs. For this we first introduce a generalization of
the infection number. Consider the following rule.

{\bf $m$-Infection Rule:} A set $A$, with $|A| \geq m$, of infected vertices
can infect the vertices in an edge $E$ if:
\begin{enumerate}
\item $A \subset E$, and 
\item there are no uninfected vertices $v$, not contained in $E$, such that $A
  \cup \{v\}$ is a subset of an edge.
\end{enumerate}

For $m=1$ this corresponds to our normal infection rule. A set of
vertices of a hypergraph is an \textsl{$m$-infection set} when the
vertices of the set are initially infected and applying the
$m$-infection rule repeatedly infects all vertices of the hypergraph.
The \textsl{$m$-infection number} of a hypergraph is the size of a
smallest infection set for $H$. The $m$-infection number is denoted by
$I_m(H)$. It is clear that $I_m(H) \leq I_{m+1}(H)$. Similarly,
$I_{m+1}(H)$ can be bounded in terms of $I_{m}(H)$.

\begin{lemma}\label{lem:basic_bnd_minfection}
  Let $H$ be a hypergraph. Then
  \begin{align*}
    I_{m+1}(H) \leq I_{m} + |E(H)|.
  \end{align*}
\end{lemma}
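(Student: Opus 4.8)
The plan is to take an $m$-infection set and augment it so that it becomes an $(m+1)$-infection set, with the augmentation costing at most $|E(H)|$ extra vertices. The obstruction to reusing an $m$-infection set $W$ under the stricter $(m+1)$-rule is exactly this: at certain steps of the $m$-infection process, a set $A$ with $|A|=m$ legitimately infects an edge $E$, but under the $(m+1)$-rule this move is forbidden because $|A| < m+1$. The idea is to pre-infect one extra vertex in each edge where such a deficient infection occurs, so that when we reach that edge the set doing the infecting already has size at least $m+1$.

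\medskip

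First I would let $W$ be a minimum $m$-infection set, so $|W| = I_m(H)$, and fix an ordering $E_{i_1}, E_{i_2}, \dots$ of the edges in the order they get infected during the $m$-infection process started from $W$. For each edge $E_j$ in this sequence that is infected by a set of size exactly $m$ (a ``deficient'' step), I would mark one vertex of $E_j$; call the set of all marked vertices $X$. Since we mark at most one vertex per edge, $|X| \leq |E(H)|$. The claim to verify is that $W \cup X$ is an $(m+1)$-infection set: I would run the infection process from $W \cup X$ and show it infects every vertex. The key point is that condition~(2) of the infection rule is unchanged between the $m$-rule and the $(m+1)$-rule — only the size threshold in the premise differs — so the same adjacency/no-outside-uninfected-neighbour certificates that justified each step of the $m$-process still hold. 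When we reach a formerly deficient step infecting $E_j$, the infecting set $A$ (now together with the pre-infected marked vertex of $E_j$, which lies in $E_j$) has size at least $m+1$, so the move is legal under the $(m+1)$-rule.

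\medskip

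The main obstacle is a subtle one about what ``condition~(2) still holds'' means in the new process: condition~(2) forbids uninfected vertices outside $E$ adjacent to $A$, and adding the marked vertices $X$ to the initial set only \emph{enlarges} the infected set at every stage, so it can only help condition~(2) (it never creates new uninfected vertices). Hence every step that was valid in the $m$-process remains valid in the $(m+1)$-process, provided the size threshold is met — and we have arranged exactly that by the marking. I would want to be careful that the marked vertex of $E_j$ is genuinely in $E_j$ and is available (infected) at the step where $E_j$ is processed; since marked vertices are all in the initial infected set $W \cup X$, they are infected from the start, so this is immediate. One should also note the degenerate possibility that $A$ already has size $\geq m+1$ at some step, in which case no marking was needed there; this only helps.

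\medskip

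Putting this together yields $I_{m+1}(H) \leq |W \cup X| \leq |W| + |X| \leq I_m(H) + |E(H)|$, which is the claimed bound. I would close by remarking that the argument is essentially a monotonicity-plus-bookkeeping argument: the stricter rule can only fail at size-$m$ infection steps, and there are at most $|E(H)|$ edges to repair.
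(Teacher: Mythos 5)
Your proof is correct and follows essentially the same route as the paper's: both augment an $m$-infection set by at most one extra pre-infected vertex per edge so that every size-$m$ infection step can be promoted to a size-$(m+1)$ step, relying on the fact that condition~(2) only becomes easier to satisfy when the infected set grows. The one point to make explicit is that the marked vertex of $E_j$ must be chosen in $E_j$ \emph{outside} the infecting set of that step (possible because if the infecting set is all of $E_j$ the step is vacuous and can be dropped); the paper handles this bookkeeping by fixing in advance, for each edge $E$ not contained in the initial $m$-infection set $A$, a vertex $f(E)\in E\setminus A$.
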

\begin{proof}
  Let $A$ be an $m$-infection set. Let $\cE$ be the edges of $E(H)$,
  which are not completely contained in $A$.  Let $f$ be a function
  from $\cE$ to $V(H)$ that maps $E \in \cE$ to one element of $E
  \setminus A$. 

  Let $B$ denote the set $A \cup \{ f(E): E \in \cE \}$. We claim that
  $B$ is an $(m+1)$-infection set.  Clearly $B$ infects all the edges
  of $H$ that are not in $\cE$.  Since $A$ is an $m$-infection set,
  there is a set $A' \subseteq A$ of size $m$ and an edge $E \in \cE$
  such that $A'$ can $m$-infect $E$.  Hence, $E$ is the only edge
  containing $A' \cup \{ f(E) \}$, so $A' \cup \{ f(E) \}$ can
  $(m+1)$-infect $E$. Continuing like this (following the
  $m$-infection process with $A$), $B$ is an $(m+1)$-infection set. As
  $|\cE| \leq |E(H)|$, this shows the assertion.
\end{proof}

This bound can be tight, for example if $H$ consists of one single
edge of size at least $2$.

\begin{theorem}\label{prop:bnd_cartesian_prod}
  Let $G$ and $H$ be hypergraphs. Then
  \begin{align*}
    I( G \Box H ) \leq I(G) I_2(H).
  \end{align*}
\end{theorem}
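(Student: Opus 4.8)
The plan is to exhibit an explicit infection set of the right size. Let $S$ be an infection set for $G$ with $|S| = I(G)$, let $T$ be a $2$-infection set for $H$ with $|T| = I_2(H)$, and set
\[
A = S \times T = \{ (g,h) : g \in S,\ h \in T \}, \qquad |A| = I(G)\, I_2(H).
\]
I would show $A$ is an infection set for $G \Box H$. The edges of $G \Box H$ come in two families: the \emph{horizontal} edges $E \times h$ (copies of $G$, one per $h \in V(H)$) and the \emph{vertical} edges $g \times E$ (copies of $H$, one per $g \in V(G)$). The structural observation driving the whole argument is this: if a set of infected vertices lies in a single column $\{g\} \times V(H)$ and has at least two distinct $H$-coordinates, then it is contained only in vertical edges $g \times E$; dually, a set lying in a single level $V(G) \times \{h\}$ with at least two distinct $G$-coordinates is contained only in horizontal edges $E \times h$. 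In either case all the obstruction-neighbours of such a set lie inside its own copy, so the infection rule of $G \Box H$ applied to such a set coincides with the infection rule inside that copy.

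\emph{Phase 1: infect the columns over $S$.} For each $g \in S$ the set $\{g\} \times T$ is infected, and I would run the $2$-infection process of $H$ inside the column $\{g\}\times V(H)$, which by definition of $I_2(H)$ fills the column. Every infecting set in this process has size at least $2$, hence (within a fixed column) has at least two distinct $H$-coordinates; by the structural observation its only possible obstructions lie in the same column and coincide exactly with the obstructions for the corresponding step in $H$. So each such step is legal in $G \Box H$, and at the end of Phase 1 the set $S \times V(H)$ is infected.

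\emph{Phase 2: infect every level at once.} Fix an infection sequence for $G$ starting from $S$, and replay each forcing step at every level $h \in V(H)$, completing one $G$-step across all levels before passing to the next. I would maintain the invariant that after processing the first $j$ steps the infected set is exactly $J_j \times V(H)$, where $J_j \supseteq S$ is the set infected after $j$ steps in $G$ (the base case $J_0 = S$ holds by Phase 1). When a $G$-step uses an infecting set $A'$ with $|A'|\ge 2$, the set $A' \times \{h\}$ has at least two distinct $G$-coordinates, so by the structural observation all obstructions lie in level $h$ and match those in $G$, which vanish since $A'$ infects its edge in $G$. When the $G$-step uses a singleton $A' = \{g\}$, the same-level obstructions again match $G$; the only new danger is a vertical neighbour $(g,h')$, but $g \in J_{j-1}$ and the invariant gives $\{g\}\times V(H) \subseteq J_{j-1} \times V(H)$ infected, so every such $(g,h')$ is already infected. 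Hence each step is legal, the invariant is preserved, and since $S$ infects $G$ we eventually reach $V(G) \times V(H)$, proving $A$ is an infection set and $I(G \Box H) \le I(G)\, I_2(H)$.

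The main obstacle is the singleton case in Phase 2, which is exactly where the cross-copy coupling of the Cartesian product could break the process; it is resolved by the product-form invariant $J \times V(H)$ forced by completing each $G$-step at all levels simultaneously. This is also the reason the bound requires $I_2(H)$ rather than $I(H)$: allowing singleton infecting sets inside a column (as a plain infection set $T$ for $H$ would) would let a single infected vertex $(g,h)$ be obstructed by uninfected same-level vertices $(g',h)$ in other, not-yet-infected columns through the horizontal edges, so Phase 1 could stall. Requiring every $H$-infecting set to have size at least $2$ removes this leakage entirely.
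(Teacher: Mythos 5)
Your proof is correct and follows essentially the same route as the paper's: take the product of an infection set for $G$ with a $2$-infection set for $H$, first fill the columns over $S$ by replaying the $2$-infection process of $H$ (where the size-$\ge 2$ hypothesis confines all obstructions to the column), then replay the infection process of $G$ simultaneously at every level. Your explicit product-form invariant $J_j\times V(H)$ and your handling of the singleton case in Phase 2 are in fact spelled out more carefully than in the paper's own argument, which leaves that bookkeeping implicit.
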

\begin{proof}
  Let $A_G$ be an infection set of $G$ and $A_H$ a $2$-infection set of $H$.
  We define a set 
\[
A = \{ (a_G, a_H): a_G \in A_G, a_H \in A_H \},
\]
we will show that $A$ is an infection set of $G \Box H$.  To start, define
\[
A_1 = \{ (a_G, a_H): a_G \in A_G, a_H \in V(H) \}.
\]
We will show that $A$ infects all the vertices of $A_1$.

Let $E_H$ be an edge of $H$. In some step of the $2$-infection process
of $H$, the edge $E_H$ is $2$-infected by a set $A_H'$ (starting from
$A_H$). We want to show that for a given $v \in A_G$, the edge $v
\times E_H$ is the only edge of $G \Box H$ that contains $\{ v \}
\times A_H'$. Suppose that there is an edge $E$ of $G \Box H$ that
contains $\{ v \} \times A_H'$.  The set $A_H'$ has size at least $2$,
so $E$ contains vertices $(v, w_1)$ and $(v, w_2)$ for some $w_1, w_2
\in V(H)$. Hence, $E$ has the form $v \times E_H'$ for some edge
$E_H'$ of $H$.  As $A_H'$ is a $2$-infection set, $E_H' = E_H$, and
our first claim follows.
  
Next we will show that $A_1$ infects all the vertices $(g,h)$ where $g
\not \in A_G$. Let $E_G$ be an edge of $G$ that is infected by a set
$A_G'$ (starting from $A_G$). We want to show that for a given $w \in
V(H)$, the edge $E_G \times w$ is the only uninfected edge of $G \Box
H$ that contains $A_G' \times \{ w \}$. If $A_G'$ has size at least
$2$, then this can be seen since it is similar to the first claim. If
$A_G'$ has size $1$, then we can assume that $(v,w)$ is the single
element of $A_G' \times \{ w \}$ (where $v \in V(G)$). The only edges
of $G \Box H$ containing $(v, w)$, other than $E_H \times w$, must
have the form $v \times E_H$ for some edge $E_H$ of $H$. By our first
claim, the edge $E_H$ is already infected.  This proves the assertion.
\end{proof}

Notice that we have $I_2(H) = V(H)$ if $H$ is a graph, so our result
implies Equation \eqref{eq:bnd_cartesian_prod_graphs}. Lemma
\ref{lem:basic_bnd_minfection} and Theorem
\ref{prop:bnd_cartesian_prod} imply the following bound that avoids
the concept.

\begin{corollary}
Let $G$ and $H$ be two hypergraphs, then
\[
I(G \Box H) \leq I(G) (I(H) + |E(H)|).
\]
\end{corollary}

This bound can be tight, e.g. when $G$ and $H$ each consist of one single edge of size at least $2$.

\section{Line graphs}
\label{sec:linegraph}

In Section~\ref{sec:defn} the line graph of a hypergraph is defined.
It is possible to construct a hypergraph $H$ for any graph $G$ such
that $L(H) = G$. Simply let the vertices of $H$ be the edges in $G$
and for each vertex of $G$ let the set of all edges incident to the
vertex be an edge in $H$.  Call this the \textsl{adjacency hypergraph for $G$}.

\begin{lemma}
  For any connected simple graph, the infection number of the corresponding
  adjacency hypergraph is $2$.
\end{lemma}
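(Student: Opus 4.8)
The plan is to read off the structure of the adjacency hypergraph $H$ in terms of $G$ and then determine $I(H)$ by proving the matching bounds $I(H)\le 2$ and $I(H)\ge 2$. Since the vertices of $H$ are the edges of $G$, and the hyperedge $E_v$ attached to a vertex $v$ of $G$ is exactly the set of edges of $G$ incident with $v$, an edge $e=ab$ of $G$ lies in precisely the two hyperedges $E_a$ and $E_b$; hence every vertex of $H$ has degree exactly two, so $\delta(H)=2$. Moreover two hyperedges satisfy $E_u\cap E_v=\{uv\}$ when $uv\in E(G)$ and $E_u\cap E_v=\emptyset$ otherwise, so any two hyperedges meet in at most one vertex and $H$ is linear. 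Finally, because $L(H)=G$ and $G$ is connected, $H$ is connected. Thus $H$ is a connected, linear hypergraph in which every vertex has degree two, and I would treat the two bounds separately.

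For the upper bound I would invoke the earlier lemma stating that a reduced, connected, linear hypergraph of maximum degree two has infection number at most two. If some $E_u$ happens to be contained in another hyperedge, I first reduce $H$: by the proposition that deleting an edge contained in another edge does not change the infection number, this leaves $I(H)$ unchanged, and deleting hyperedges preserves linearity, the maximum-degree-two bound, and connectivity, so the cited lemma applies to the reduced hypergraph. Concretely, the infection starts from two adjacent vertices $e,f$ of $H$, that is, two edges of $G$ sharing an endpoint $w$: by linearity the set $\{e,f\}$ is contained only in $E_w$, so no uninfected vertex outside $E_w$ can be adjacent to the whole set $\{e,f\}$, and hence $\{e,f\}$ infects $E_w$. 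Connectivity then lets the infection propagate one hyperedge at a time, exactly as in the proof of that lemma, giving $I(H)\le 2$.

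For the lower bound I would use Lemma~\ref{lem:degreebound}: since $H$ is reduced and $\delta(H)=2\neq 1$, no single vertex can be an infection set, so $I(H)\ge 2$, which together with the upper bound yields $I(H)=2$. The main obstacle is precisely this lower bound, since it rests on the reduced hypergraph still having minimum degree two, equivalently on showing that no singleton $\{e\}$ can both begin and complete the infection. The step to nail down is that for every edge $e=ab$ the set $\{e\}$ fails to infect either $E_a$ or $E_b$, the intended reason being that the second hyperedge through $e$ contributes an uninfected vertex $v$ with $\{e,v\}$ contained in an edge, so that condition~(2) of the infection rule is violated; verifying that this holds uniformly, and survives the reduction step, is where the real care is needed. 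By Proposition~\ref{prop:connected} one may also reduce to the connected case at the outset, so the whole argument splits cleanly into the linearity-and-degree bookkeeping of the first paragraph and the start-of-infection analysis just described.
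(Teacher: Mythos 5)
Your structural analysis and your upper bound follow the paper's own proof almost exactly: the paper also observes that every vertex of $H$ has degree two and that $H$ is linear (two vertices of $H$ in two common hyperedges would be a double edge of $G$), and then infects $H$ starting from any two adjacent vertices. Where you go beyond the paper is in isolating the lower bound $I(H)\ge 2$ as a separate claim --- the paper does not argue it at all --- and you are right that this is where the difficulty sits. But the step you flag as ``where the real care is needed'' is a genuine gap, and it cannot be closed: the reduced adjacency hypergraph need not have minimum degree two. In a simple graph, $E_u\subseteq E_v$ exactly when $u$ is a leaf adjacent to $v$ (every edge at $u$ must be the edge $uv$). So whenever $G$ has a leaf $u$ with neighbour $v$, the hyperedge $E_u=\{uv\}$ is contained in $E_v$, and after reduction the vertex $uv$ of $H$ has degree one. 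The singleton $\{uv\}$ then infects $E_v$ directly (every vertex of $H$ adjacent to $uv$ lies in $E_u\cup E_v=E_v$), and the infection propagates through the rest of the connected hypergraph exactly as in your upper-bound argument, giving $I(H)=1$. Concretely, for $G=P_4$ with edges $e_1,e_2,e_3$ the hyperedges of $H$ are $\{e_1\}$, $\{e_1,e_2\}$, $\{e_2,e_3\}$, $\{e_3\}$, and the single vertex $e_1$ infects everything.

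So the lemma as stated fails for connected simple graphs with a vertex of degree one, and neither your argument nor the paper's establishes the lower bound in that generality; the paper simply asserts $\delta(H)=2$ and never invokes Lemma~\ref{lem:degreebound}, silently ignoring that $H$ may fail to be reduced. Under the additional hypothesis that $G$ has minimum degree at least two, $H$ is already reduced, every vertex of $H$ genuinely has degree two in the reduced hypergraph, and your two-sided argument --- the linear-hypergraph lemma for $I(H)\le 2$ and Lemma~\ref{lem:degreebound} for $I(H)\ge 2$ --- is complete and correct. The right fix is therefore to add that hypothesis (or to restrict to graphs with no leaves), not to look for a cleverer verification of the degree condition.
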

\begin{proof}
  Let $G$ be a connected simple graph and $H$ the adjacency hypergraph
  of $G$. Each vertex in $H$ has degree $2$ since it corresponds to an
  edge in $G$ with two end points. Further a pair of vertices in $H$
  can belong to at most one edge, since otherwise they would
  corresponds to a double edge in $G$.

  Thus any two adjacent infected vertices in $H$ can infect the edge
  in which they are contained. Then the remaining vertices in the edge
  occur in only one other edge each. These vertices can then force the
  other edges that contain them. Since $G$ is connected, continuing
  like this the hypergraph $H$ can be infected by any two adjacent
  vertices.
\end{proof}

It has been shown in~\cite[Theorem 3.11]{eroh} that for any graph $G$ 
\[
Z(G) \leq 2 Z( L(G) ).
\]

We state the following lemma, but we suspect that a stronger result holds.
\begin{lemma}
For any $k$-hypergraph $H$,
\[
I(H) \leq k Z( L(H) ).
\]
\end{lemma}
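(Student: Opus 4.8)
The plan is to connect an infection process on the line graph $L(H)$ to an infection process on the original $k$-hypergraph $H$, using the fact that each vertex of $L(H)$ corresponds to an edge of $H$. Let $Z$ be a minimum zero forcing set for $L(H)$, so $|Z| = Z(L(H))$, and write $\mathcal{E}_0 \subseteq E(H)$ for the set of edges of $H$ corresponding to the vertices of $Z$. The idea is to build an infection set $A$ for $H$ by selecting vertices from these initially-distinguished edges: for each edge in $\mathcal{E}_0$, place all but one of its vertices (or some suitable subset) into $A$. Since each edge has size $k$, this costs at most $k$ vertices per edge, giving $|A| \le k\,|Z| = k\,Z(L(H))$, which is exactly the target bound. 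The substance of the proof is then to verify that $A$ is genuinely an infection set for $H$.

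First I would set up the dictionary between the two processes. In zero forcing on $L(H)$, a black vertex $e$ forces a white neighbour $e'$ precisely when $e'$ is the unique white vertex adjacent to $e$; adjacency in $L(H)$ means the corresponding edges of $H$ intersect. I would like to interpret ``edge $e$ of $H$ is fully infected'' as ``vertex $e$ of $L(H)$ is black.'' With the choice above, each edge in $\mathcal{E}_0$ starts fully infected (all its vertices are in $A$, or become infected immediately because only one vertex is missing and the degree-one-type argument of the Infection Rule lets the remaining vertex be infected). Then I would prove by induction on the zero-forcing steps in $L(H)$ that whenever $e$ forces $e'$, the already-infected vertices of $H$ lying in the intersection $e \cap e'$ suffice to infect the entire edge $e'$ under the hypergraph Infection Rule, so that $e'$ becomes fully infected and the correspondence is preserved.

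The key step, and the one I expect to be the main obstacle, is verifying condition (2) of the Infection Rule at each simulated force. When $e$ forces $e'$ in $L(H)$, this says $e'$ is the only white neighbour of $e$; equivalently, every other edge of $H$ meeting $e$ is already fully infected. I would take the infecting set $A'$ to be the infected vertices in $e \cap e'$ and argue that $A'$ cannot be adjacent to any uninfected vertex outside $e'$: any edge $E''$ containing $A'$ together with such a vertex would be an edge of $H$ meeting $e$ (since $A' \subseteq e$), hence would correspond to a black neighbour of $e$ in $L(H)$, forcing $E''$ to be already infected. The delicate point is guaranteeing that $A' = e \cap e'$ is non-empty and that all its vertices are indeed infected at the moment of the force; this requires being careful that the hypergraph infection respects the \emph{order} of forces coming from a fixed zero-forcing ordering of $L(H)$, and that intersections of edges behave well (in particular when two edges share more than the single vertex picked for $A$). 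I would handle this by processing the forces in a fixed valid zero-forcing order and maintaining the invariant that the set of fully-infected edges of $H$ equals the set of currently-black vertices of $L(H)$, so that at each step the needed intersection vertices are available.
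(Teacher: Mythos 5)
Your proposal is correct and follows essentially the same route as the paper: put every vertex of each edge of $H$ corresponding to a vertex of a zero forcing set of $L(H)$ into $A$ (cost $k$ per edge), then simulate each force $e \to e'$ in $L(H)$ by letting the infected vertices of $e \cap e'$ infect $e'$, condition (2) of the Infection Rule holding because any edge containing a nonempty subset of $e$ meets $e$ and hence is already black, i.e.\ fully infected. The only caveat is that the ``all but one of its vertices'' variant you float in passing is not automatically justified (the last vertex of such an edge need not be infectable immediately), but you do not rely on it and the full-edge choice gives exactly the claimed bound.
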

\begin{proof}
  For $v$ in the vertex set of $L(H)$, let $E_v$ be the corresponding
  edge in $H$. Let $Z$ be a zero forcing set for $L(H)$. Add all the
  vertices of $H$ that are in the edge $E_v$ where $v \in Z$ to a set
  $A$. We claim that $A$ is an infection set for $I(H)$ of size
  $k\,Z(L(H))$.

  Assume that in the first step of the zero forcing process on $L(H)$
  (using the set $Z$) that $v$ forces $w$. Then all the vertices in
  $E_v$ are initially infected, further, all the vertices in any edge
  intersecting $E_v$ are also all infected, except the vertices in
  $E_w$. Thus the vertices in $E_v \cap E_w$ can infect
  $E_w$. Continuing like this, shows that $A$ is an infection set for
  $I(H)$.
\end{proof}

\section{Further work}

This paper is a first generalization of zero forcing to
hypergraphs. There are many open problems and different directions in
which to continue this work. A few are listed below.

We have seen that there are many different hypergraphs with infection
number $1$, including any hypergraph in which the vertex set is an
edge.  It is also clear that if the infection number of a reduced
hypergraph is $1$, then that hypergraph must have a degree $1$
vertex. But this is in no way a sufficient condition for a hypergraph
to have infection number $1$. It would be interesting to find some
characterization of the $k$-hypergraphs that have infection number
equal to $1$.

Similarly, a graph on $n$ vertices has zero forcing number $n-1$ if
and only if it is a complete graph. We would like to be able to
characterize the $k$-uniform hypergraphs on $n$ vertices with
infection number $n-(k-1)$.  To this end, we make the following
conjecture.

\begin{conjecture}
  Assume that $H$ is a $k$-uniform hypergraph on $n$ vertices with
  $I(H) =n-(k-1)$.  Then for any infection set of size $n-(k-1)$, the
  set of $k-1$ uninfected vertices are contained in an edge of $H$.
\end{conjecture}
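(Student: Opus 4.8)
The plan is to argue by contradiction: suppose $A$ is an infection set with $|A| = n-k+1$ whose uninfected complement $U = V \setminus A$ (of size $k-1$) is \emph{not} contained in any edge, and then produce an infection set of size $n-k$, contradicting $I(H) = n-k+1$.

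The engine of the argument is an exchange lemma anchored at the first infection. Run the infection process from $A$ and let $A'$ be the set performing the first step that infects a new vertex, say infecting the edge $E$; thus $A' \subseteq E \cap A$ and $E \cap U \neq \emptyset$. First I would record the monotonicity observation that if $A'$ infects $E$ and $A' \subseteq A'' \subseteq E$, then $A''$ also infects $E$, since enlarging the infecting set only makes the blocking condition harder to violate. Now suppose $A' \subsetneq E \cap A$ and pick $x \in (E\cap A)\setminus A'$. Starting instead from $A \setminus \{x\}$ (size $n-k$), the set $A'$ still lies in the infected set and still infects $E$: the only new uninfected vertex is $x$, and since $x \in E$ it is not a vertex \emph{outside} $E$, so the infection condition for $A'$ is unchanged. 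Infecting $E$ re-infects $x$, after which the infected set equals the one reached after the first step of the original process, so $A\setminus\{x\}$ closes to all of $V$, a contradiction. Hence every first infection must be \emph{tight}: the infecting set is forced to be all of $E \cap A$, and no proper subset infects $E$.

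Next I would dispose of the favourable tight case. If the first infection is tight and $|E \cap U| = k-1$, then $E \cap U = U$, so $U \subseteq E$ and the conclusion holds outright (here $E \cap A$ is a single vertex). The remaining, genuinely hard case is a \emph{tight cascade}: every first infection is tight and infects only part of $U$ (so $|E \cap U| < k-1$), forcing $U$ to be infected over several steps through distinct edges. One still wants a removable vertex of $A$, but the naive exchange breaks down: removing a vertex $x \notin E$ can block the first infection, because $x$, although infected in the original run, may be adjacent to $A'$, and as a new uninfected vertex outside $E$ it now violates the infection condition. So the exchange must be iterated along the infection order, choosing at a later tight step a vertex that is re-infected by that step while being irrelevant to all earlier steps.

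I expect this cascading case to be the main obstacle, and it is presumably why the statement is posed as a conjecture rather than a theorem. Two complementary lines seem promising. The first is to make the iterated exchange precise by fixing an ordering $E_1, E_2, \dots$ of the edges of an infection run and removing a vertex that first becomes essential only after it has already been re-infected; this requires controlling the dependency structure of the run. The second, suggested by the fact that small attempted counterexamples collapse, is structural: tightness of an infection forces, for each vertex of the infecting set, a blocking edge meeting $U$, and these blocking edges tend to create low-degree (indeed near degree-one) configurations at the vertices of $U$. By Lemma~\ref{lem:degreebound} and the analysis surrounding the complete hypergraph, such configurations drive the infection number strictly below $n-k+1$. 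Turning this heuristic into a proof---quantifying how the tight blocking edges interact with the global maximality condition $I(H)=n-k+1$---is the crux.
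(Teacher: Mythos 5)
The statement you are addressing is posed in the paper as an open conjecture; the authors give no proof, so there is no argument of theirs to compare yours against. The real issue is that your proposal is not a proof either, and you say as much yourself. What you do establish is sound: the monotonicity of the infection condition under enlarging the infecting set within $E$, and the exchange step showing that if the first infection of an edge $E$ is performed by a proper subset $A'\subsetneq E\cap A$, then deleting any $x\in(E\cap A)\setminus A'$ still lets $A'$ infect $E$ (since $x\in E$ cannot act as a blocker outside $E$), after which monotonicity of the derived set yields an infection set of size $n-k$ and a contradiction. This correctly reduces the conjecture to the case where the first infecting set must be all of $E\cap A$ with $|E\cap A|\geq 2$, i.e.\ $|E\cap U|\leq k-2$.

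That remaining case is the entire content of the conjecture, and you leave it open. Neither of your two suggested continuations is carried out: the iterated exchange along the infection order is only sketched, and you identify precisely why the naive version fails (a removed vertex $x\notin E$ becomes an uninfected blocker for $A'$, and a removed vertex $x\in E\cap A$ may destroy the tightness of the infecting set); the structural line via blocking edges and Lemma~\ref{lem:degreebound} is a heuristic with no quantitative argument behind it. So the proposal contains a genuine gap -- the tight-cascade case -- and until that is closed you have a partial reduction, not a proof. It would still be worth recording the reduction as a lemma (any minimum infection set of size $n-k+1$ must begin with a tight infection), since it constrains potential counterexamples, but it does not settle the conjecture.
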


The zero forcing number for a tree has been well-studied~\cite{MR2388646}. In
fact, it has been shown that the zero forcing number of a tree is
equal to the minimum number of paths needed to cover the edges of the
tree. It would be interesting to have an analogous result for
hypertrees. 

The first step is to determine what the appropriate hypergraph version of a tree
is; one definition uses the notion of a \textsl{host graph}. For a
hypergraph $H$, a host graph is a graph on the same vertex set as $H$
with the property that the vertices in any edge of $H$ induce a
connected subgraph in the host graph.  A hypergraph is called
a \textsl{hypertree} if there exists a host graph of the hypergraph
that is a tree.

So far our results in this direction are unsatisfying.
For example, if $H$ is a hypertree, then it may happen that $I(H) > Z(T)$ for a
host tree of $H$. To see this consider the hypergraph 
\[
H:=\{ (1,5,6),\, (2,5,6),\, (3,5,6),\, (4,5,6)\}.
\]
The tree below is a host tree for $H$
\begin{figure}[h!]
\begin{center}
\begin{tikzpicture}
\draw (-1,1) -- (-1,0) -- (-1,-1);
\draw (1,1) -- (1,0) -- (1,-1);
\draw (1,0) -- (-1,0);
\draw [fill] (-1,0)  circle (2pt) node [left] {5};
\draw [fill] (1,0)  circle (2pt) node  [right] {6};
\draw [fill] (-1,1)  circle (2pt) node  [left] {1};
\draw [fill] (-1,-1)  circle (2pt) node  [left]{2};
\draw [fill] (1,1)  circle (2pt) node  [right] {3};
\draw [fill] (1,-1)  circle (2pt) node  [right] {4};
\end{tikzpicture}
\end{center}
\end{figure}

Note that $I(H) = 3$; the set $\{1,2,3\}$ can infect the hypergraph,
and there is no smaller set that forms an infection set. Also, the
zero forcing number for the host tree above is $2$, since $\{1,3\}$ is
a zero forcing set.  In fact, we can make this difference arbitrarily
large. Let $H = \{E_1, E_2, \dots, E_p\}$ be a hypergraph on $n$
vertices that is a flower with $|E_i \cap E_j|=1$, then the graph
$K_{1,n-1}$ is a host tree for $H$. In this case,
\[
p-1 = I(H) < Z(K_{1,n-1}) = n-2.
\]
The graph formed by taking the vertex sum of $p$ paths $P_i$, each
with length $|E_i|-1$ is also a host graph for $H$. The zero forcing
number of this graph is $p-1$.

We propose two questions about hypertrees. First, is it possible to determine a formula for the
infection number of a hypertree or for a subfamily of hypertrees?
Second, is there a relationship between zero forcing of a host tree and
infection number of hypertree?

If $2t+1 \geq k$ and $n\leq 2t$, then we are not  aware of a formula for the
infection number for $C^{(k)}_{n}(t)$. We would like have a more
complete picture for infection of hypercycles.

Finally, we have defined $m$-infection, this concept is useful for
Cartesian products. It would be interesting to determine the
$m$-infection number for some families of graphs and to find other
applications of the concept.

{\bf Acknowledgments.} The work in this paper was a joint project of
the Discrete Mathematics Research Group at the University of Regina,
attended by all the authors. 

\bibliographystyle{plain}
\bibliography{infection_literature}

\begin{thebibliography}{10}

\bibitem{MR2645093}
Francesco Barioli, Wayne Barrett, Shaun~M. Fallat, H.~Tracy Hall, Leslie
  Hogben, Bryan Shader, P.~van~den Driessche, and Hein van~der Holst.
\newblock Zero forcing parameters and minimum rank problems.
\newblock {\em Linear Algebra Appl.}, 433(2):401--411, 2010.

\bibitem{MR3010007}
Francesco Barioli, Wayne Barrett, Shaun~M. Fallat, H.~Tracy Hall, Leslie
  Hogben, Bryan Shader, P.~van~den Driessche, and Hein van~der Holst.
\newblock Parameters related to tree-width, zero forcing, and maximum nullity
  of a graph.
\newblock {\em J. Graph Theory}, 72(2):146--177, 2013.

\bibitem{tuza}
Csilla Bujt{\'a}s, Michael~A. Henning, and Zsolt Tuza.
\newblock Transversals and domination in uniform hypergraphs.
\newblock {\em European J. Combin.}, 33(1):62--71, 2012.

\bibitem{MR2246267}
Charles~J. Colbourn and Jeffrey~H. Dinitz, editors.
\newblock {\em Handbook of combinatorial designs}.
\newblock Discrete Mathematics and its Applications (Boca Raton). Chapman \&
  Hall/CRC, Boca Raton, FL, second edition, 2007.

\bibitem{eroh}
Linda Eroh, Cong~X. Kang, and Eunjeong Yi.
\newblock Metric dimension and zero forcing number of two families of line
  graphs.
\newblock {\em Math. Bohem.}, 139(3):467--483, 2014.

\bibitem{MR2388646}
AIM Minimum Rank-Special Graphs~Work Group.
\newblock Zero forcing sets and the minimum rank of graphs.
\newblock {\em Linear Algebra Appl.}, 428(7):1628--1648, 2008.

\bibitem{propagation}
Leslie Hogben, My~Huynh, Nicole Kingsley, Sarah Meyer, Shanise Walker, and
  Michael Young.
\newblock Propagation time for zero forcing on a graph.
\newblock {\em Discrete Appl. Math.}, 160(13-14):1994--2005, 2012.

\bibitem{HCY}
Liang-Hao Huang, Gerard~J. Chang, and Hong-Gwa Yeh.
\newblock On minimum rank and zero forcing sets of a graph.
\newblock {\em Linear Algebra Appl.}, 432(11):2961--2973, 2010.

\bibitem{severini}
Simone Severini.
\newblock Nondiscriminatory propagation on trees.
\newblock {\em J. Phys. A}, 41(48):482002, 10, 2008.

\bibitem{yang}
Boting Yang.
\newblock Fast-mixed searching and related problems on graphs.
\newblock {\em Theoret. Comput. Sci.}, 507:100--113, 2013.

\end{thebibliography}

\end{document}